\def\@abssec#1{\vspace{.05in}\footnotesize \parindent .2in
{\bf #1. }\ignorespaces}
\newtheorem{theorem}{Theorem}[section]
\newtheorem{lemma}[theorem]{Lemma}
\newtheorem{proposition}[theorem]{Proposition}
\newtheorem{remark}[theorem]{Remark}
\newcounter{hypo}
\DeclareMathOperator{\divg}{div}
\newcommand{\R}{\ensuremath{\mathbb{R}}}
\newcommand{\N}{\ensuremath{\mathbb{N}}}
\newcommand{\Id}{\ensuremath{\mathrm{Id}}}
\newcommand{\mh}{\ensuremath{\mathrm{h}}}
\newcommand{\mtd}{\ensuremath{\mathrm{2d}}}
\newcommand{\beq}[1]{\begin{equation}\label{#1}}
\newcommand{\eeq}{\end{equation}}
\newcommand{\beqs}{\begin{equation*}}
\newcommand{\eeqs}{\end{equation*}}
\newcommand{\set}[1]{\left\{#1\right\}}
\newcommand{\no}[1]{\left\|#1\right\|}
\newcommand{\dd}{\mathrm{d}}
\renewcommand{\d}{\partial}
\allowdisplaybreaks \numberwithin{equation}{section}
\begin{document}

\title[Between HNS and INS system: the issue of stability]{Between homogeneous and inhomogeneous Navier-Stokes systems: the issue of stability}
\author{Piotr B. Mucha}
\address{Institute of Applied Mathematics and Mechanics, University of Warsaw, ul. Banacha 2 Warszawa, Poland}
\email{p.mucha@mimuw.edu.pl}
\author{Liutang Xue}
\address{Laboratory of Mathematics and Complex Systems (MOE), School of Mathematical Sciences, Beijing Normal University, Beijing 100875, P.R. China}
\email{xuelt@bnu.edu.cn}
\author{Xiaoxin Zheng}
\address{School of Mathematics and Systems Science, Beihang University, Beijing 100191, P.R. China}
\email{xiaoxinzheng@buaa.edu.cn}
\subjclass[2010]{76D03, 35Q35, 35Q30, 35K15.}
\keywords{3D Inhomogeneous Navier-Stokes system, 2D homogeneous Navier-Stokes system, stability, Lagrangian coordinates.}
\date{\today}
\maketitle

\begin{abstract}
We construct large velocity vector solutions to the three dimensional inhomogeneous Navier-Stokes system. The result is proved via the stability of two dimensional solutions with constant density,
under the assumption that initial density is point-wisely close to a constant. Key elements of our approach are estimates in the maximal regularity regime and the Lagrangian coordinates.
Considerations are done in the whole $\R^3$.
\end{abstract}

\section{Introduction}

The Navier-Stokes equations take an extraordinary position in Partial Differential Equations. Thanks to the still open Millennium Problem \cite{Mil}, asking
if weak solutions in the three space dimensional case are indeed classical, provided the given data are smooth, the system is a symbol of challenging problems in nowadays mathematics.
From the physical viewpoint the Navier-Stokes equations describe the motion of incompressible flows of viscous Newtonian fluid with constant density. Here we want to study the
connection of this classical system with its modification allowing variable density (\cite{PLL}). Namely, we consider the three dimensional (3D) inhomogeneous Navier-Stokes equations
\begin{equation}\label{INS}
\textrm{(INS)}\;\begin{cases}
  \partial_t \rho + v\cdot\nabla \rho =0, &\quad \textrm{in}\;\;\R^3 \times \R^+,\\
  \rho \partial_t v + \rho v\cdot\nabla v - \nu\Delta v + \nabla p =0, &\quad \textrm{in}\;\;\R^3 \times \R^+,\\
  \divg v=0, &\quad\textrm{in}\;\;\R^3 \times \R^+, \\
  \rho|_{t=0}(x)=\rho_0(x), \quad v|_{t=0}(x)= v_0(x), &\quad \textrm{on}\;\;\R^3,
\end{cases}
\end{equation}
where $x=(x_1,x_2,x_3)\in \R^3$, $\nu> 0$ is the kinematic viscosity, $\rho$ is the scalar density field, and $v$ is the incompressible velocity vector field. For simplicity, we assume $\nu\equiv 1$.

Mathematical properties of (\ref{INS}) are almost the same as those of the classical 3D Navier-Stokes equations (with the constant density $\rho$) \cite{K,LS}.
The main difference is found in the issue related to the density. Questions concerned with the low regularity of initial density or the possibility of vacuum states are the subjects of current studies of (IHS) system (\ref{INS}) (see e.g. \cite{DanM12,DanM13,DanM17,HPZ,Li,PZZ}).

Our goal here is slightly different: we want to find solutions to the (INS) system (\ref{INS}) with large velocity vector field like in \cite{ChPZ,PZ15}.
The plan is to consider the stability issue of the equations \eqref{INS} around the 2D homogeneous Navier-Stokes equations with constant-valued density $1$.
More precisely, let $v^\mtd=(v^\mtd_1,v^\mtd_2, v^\mtd_3)$ be a three-component two dimensional vector field which solves the 2D homogeneous Navier-Stokes equations
\begin{equation}\label{HNS}
\textrm{(HNS)}\;\begin{cases}
  \partial_t v^\mtd + v^\mtd_\mh \cdot\nabla_\mh v^\mtd -  \Delta_\mh v^\mtd + \nabla p^\mtd=0, \\
  \nabla_\mh \cdot v^\mtd_\mh =0, \\
  v^\mtd |_{t=0}(x_{\mathrm{h}})=v^\mtd_0(x_{\mathrm{h}}),
\end{cases}
\end{equation}
where $x_{\mathrm{h}}=(x_1,x_2)\in\R^2$, $ t\in \R^+$, $\nabla_\mh:= (\partial_{x_1},\partial_{x_2})$, $\Delta_\mh:= \partial_{x_1}^2 + \partial_{x_2}^2$, then for the solution $(v, \rho,p)$ of the (INS) system \eqref{INS} and the solution $(v^\mtd,p^\mtd)$ of 2D (HNS) equations \eqref{HNS}, denoting by
\begin{equation}\label{}
   w(t,x):= v(t,x)-v^\mtd(t,x_\mh) , \quad h(t,x) :=\rho(t,x) -1, \quad q(t,x) :=p(t,x)-p^\mtd(t,x_\mh),
\end{equation}
we mainly consider the following perturbed system
\begin{equation}\label{pertS1}
\begin{cases}
  h_t + v\cdot\nabla h=0, \\
  w_t + v\cdot\nabla w-  \Delta w + \nabla q = F, \\
  \divg w=0, \\
  w|_{t=0}=w_0,\quad h|_{t=0}=h_0,
\end{cases}
\end{equation}
where
\begin{equation}\label{pertS2}
  F:= -h (v^\mtd)_t - h \,w_t -h(v\cdot\nabla w) -\rho( w_\mh \cdot\nabla_\mh v^\mtd) -h (v^\mtd_\mh\cdot\nabla_\mh v^\mtd).
\end{equation}

Let us emphasize that such a stability analysis has been well developed for the 3D homogeneous Navier-Stokes equations (see e.g. \cite{CG06,ChGP,G,KRZ,Zaj}),
but has not been pursued for (INS) system \eqref{INS}.


\bigskip

Our first result concerns the flow in the whole space with regular initial density.

\begin{theorem}\label{thm:regu}
  Let  $p>3$ and $v^\mtd=(v^\mtd_1,v^\mtd_2, v^\mtd_3)$ be the unique strong solution to 2D (HNS) system \eqref{HNS} on $\R^2$ with $v^\mtd_0\in L_2\cap \dot B^{3-2/p}_{p,p}(\R^2)$.
Assume that $\rho_0-1\in L_2\cap L_\infty(\R^3)$, $\nabla \rho_0\in L_3(\R^3)$ and $v_0-v^\mtd_0\in L_2\cap \dot B^{2-2/p}_{p,p}(\R^3)$.
There exist two generic constants $c_0,C'>0$ such that if $(\rho_0,v_0)$ satisfies
\begin{equation}\label{t1}
  \|\rho_0-1\|_{ L_2\cap L_\infty(\R^3)} + \|v_0-v^\mtd_0\|_{L_2\cap\dot B^{2-2/p}_{p,p}(\R^3)}\leq c_0\,
  \exp\left\{-C'\left(\|v^\mtd_0\|_{L_2\cap \dot B^{2-2/p}_{p,p}}^{4p} +1\right)e^{C'(1+\|v^\mtd_0\|_{L_2}^4)}\right\},
\end{equation}
then we have a unique global-in-time solution $(\rho,v)$ to the system (INS).
Furthermore, the solution $(\rho,v)$ obeys the following estimates
\begin{equation}\label{t2}
  \|\rho-1\|_{L_\infty(0,\infty; L_2\cap L_\infty(\R^3))}\leq \|\rho_0-1\|_{L_2\cap L_\infty(\R^3)},
\end{equation}
and
\begin{equation}\label{t3}
  \sup_{t<\infty} \|v(t)-v^\mtd(t)\|_{L_2\cap \dot B^{2-2/p}_{p,p}(\mathbb{R}^3)} + \|(v-v^\mtd)_t,\nabla^2 (v-v^\mtd),\nabla(p-p^\mtd)\|_{L_p(\R^3\times(0,\infty))} \leq C \,c_0,
\end{equation}
and
\begin{equation}\label{t4}
  \sup_{t\in [0,T]} \|\nabla \rho(t)\|_{L_3(\mathbb{R}^3)} \leq  \|\nabla \rho_0\|_{L_3(\R^3)} e^{(1+ T)C(h_0,w_0,v^\mtd_0)}, \quad \textrm{for any $T>0$},
\end{equation}
with $C(h_0,w_0,v^\mtd_0)>0$ some constant depending only on the initial data.
\end{theorem}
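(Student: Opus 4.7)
The plan is to build the solution by a fixed-point argument for the perturbed system \eqref{pertS1}--\eqref{pertS2}, performed in Lagrangian coordinates so that the density, whose regularity is too low to be controlled in Eulerian variables, becomes stationary. Writing $X(t,\cdot)$ for the flow map of $v$, $\bar w(t,y)=w(t,X(t,y))$ and $\bar h(t,y)=h(t,X(t,y))$, one has $\bar h(t,y)=h_0(y)$ for all $t$, while the momentum equation becomes a Stokes-type system with $(1+h_0)\partial_t \bar w$ on the left and perturbative terms in $\nabla_y X-\Id$ on the right. One then closes the estimates by maximal $L_p$ regularity for the Stokes operator on the whole space, using the smallness of $h_0$ in $L_\infty$ and of $\nabla_y X-\Id$. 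The restriction $p>3$ is dictated by the embedding $W^{1,p}(\R^3)\hookrightarrow L_\infty$, which converts the maximal regularity bound on $\nabla^2 w$ into the Lipschitz control $\|\nabla w\|_{L_1(0,T;L_\infty)}$ needed to control the flow map.

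A preliminary task is to derive global-in-time a priori bounds for $v^\mtd$. The $L_2$ energy identity yields $\|v^\mtd\|_{L_\infty L_2}+\|\nabla v^\mtd\|_{L_2 L_2}\lesssim \|v^\mtd_0\|_{L_2}$; the 2D enstrophy equation gives $\|\nabla v^\mtd\|_{L_\infty L_2}+\|\Delta v^\mtd\|_{L_2 L_2}\lesssim e^{C\|v^\mtd_0\|_{L_2}^4}$; and bootstrapping through parabolic theory in $\dot B^{3-2/p}_{p,p}$ delivers
\beqs
  \|v^\mtd\|_{L_\infty(\dot B^{3-2/p}_{p,p})}+\|v^\mtd_t,\nabla^2 v^\mtd,\nabla p^\mtd\|_{L_p(\R^2\times\R^+)}\leq \Phi(\|v^\mtd_0\|_{L_2\cap \dot B^{3-2/p}_{p,p}}),
\eeqs
where $\Phi$ is exactly the polynomial-times-exponential expression appearing in \eqref{t1}. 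The 2D Sobolev embedding also gives $\nabla v^\mtd\in L_p(\R^+;L_\infty(\R^2))$, which will be used repeatedly.

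The main technical obstacle is estimating $F$ in $L_p(\R^3\times\R^+)$, because every factor built from $v^\mtd$, $(v^\mtd)_t$ or $\nabla_\mh v^\mtd$ is independent of $x_3$ and hence does not belong to any isotropic $L_p(\R^3)$ norm. The remedy is that each such factor is always multiplied by either $h$ or $w_\mh$, which do depend on $x_3$ and supply the missing integrability. For the hardest term $h\,(v^\mtd)_t$ one writes
\beqs
  \int_{\R^3}|h\,v^\mtd_t|^p\,\dd x \leq \|h\|_{L_\infty(\R^3)}^{p-2}\int_{\R^2}|v^\mtd_t(x_\mh)|^p\,\|h(x_\mh,\cdot)\|_{L_2(\R)}^2\,\dd x_\mh,
\eeqs
and, combined with propagation of the $L_2\cap L_\infty$ norm of $h$ along the flow and the planar bound $v^\mtd_t\in L_p(L_p(\R^2))$ from the previous step, this closes the term with an extra factor $\|h\|_{L_\infty}^{1-2/p}$. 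Analogous anisotropic estimates dispose of $\rho\,w_\mh\cdot\nabla_\mh v^\mtd$ and $h\,v^\mtd_\mh\cdot\nabla_\mh v^\mtd$ using $\nabla v^\mtd\in L_p(L_\infty(\R^2))$, while $h\,w_t$ and $h(v\cdot\nabla w)$ are absorbed directly into the left-hand side thanks to the smallness of $\|h\|_{L_\infty}$. The condition \eqref{t1} is tailored to make the resulting map a strict contraction.

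Smallness then propagates globally: the $L_2$ energy estimate for $w$ gives \eqref{t2}, while the $L_p$ maximal regularity inequality yields \eqref{t3} uniformly in time, so a standard continuation argument produces the global solution. Estimate \eqref{t4} follows by differentiating the transport equation into $\partial_t\nabla\rho+v\cdot\nabla(\nabla\rho)=-(\nabla v)^{\top}\nabla\rho$, applying Gronwall in $L_3$, and bounding
\beqs
  \int_0^T\|\nabla v(t)\|_{L_\infty(\R^3)}\,\dd t\leq T^{1-1/p}\Bigl(\|\nabla v^\mtd\|_{L_p(0,T;L_\infty(\R^2))}+\|\nabla w\|_{L_p(0,T;L_\infty(\R^3))}\Bigr),
\eeqs
the right-hand side being finite by the a priori bounds on $v^\mtd$ and by \eqref{t3}; this produces exactly the $e^{(1+T)C}$ dependence displayed in \eqref{t4}.
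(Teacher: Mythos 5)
Your strategy is genuinely different from the paper's: the paper proves Theorem \ref{thm:regu} entirely in Eulerian coordinates (an iterated approximate system \eqref{appPertS}, uniform $L_2$ and maximal $L_p$ bounds, an $L_2$-contraction of the iterates, and an Eulerian uniqueness argument), and it reserves the Lagrangian formulation for Theorem \ref{thm:rough}. Unfortunately, running the whole fixed point in Lagrangian coordinates under the hypotheses of Theorem \ref{thm:regu} hits a real obstruction. In Lagrangian variables the forcing contains $h_0(y)\,(v^\mtd)_t(t,X_{v,\mh}(t,y))$ and $\rho_0(y)\,\overline{w_\mh}\cdot(\nabla_\mh v^\mtd)(t,X_{v,\mh}(t,y))$, where the flow map $X_v$ depends on the unknown; any contraction (or uniqueness) estimate therefore has to control differences such as $(v^\mtd)_t(t,X_{v_1,\mh})-(v^\mtd)_t(t,X_{v_2,\mh})$ and $\nabla_\mh v^\mtd(t,X_{v_1,\mh})-\nabla_\mh v^\mtd(t,X_{v_2,\mh})$, i.e.\ spatial Lipschitz bounds on $(v^\mtd)_t$ and on $\nabla_\mh v^\mtd$ integrable in time. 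Under $v^\mtd_0\in L_2\cap\dot B^{3-2/p}_{p,p}(\R^2)$ the available regularity is \eqref{v2d-es1}, which gives $\partial_t v^\mtd,\nabla^2_\mh v^\mtd\in L_p(\R^+;W^1_p)$ but not $\nabla_\mh\partial_t v^\mtd\in L^1_{loc}(L_\infty)$; this is exactly why the paper strengthens the assumption to $\dot B^{4-2/p}_{p,p}$ (Lemma \ref{lem:2DNS-2.2}, estimate \eqref{v2d-es2}) before using the Lagrangian route in Theorem \ref{thm:rough}, and why Theorem \ref{thm:regu} instead proves uniqueness in Eulerian coordinates, where the hypothesis $\nabla\rho_0\in L_3$ is what makes the difference estimate $\|\delta h\|_{L_2}\lesssim t^{1/2}\|\nabla h\|_{L_\infty(L_3)}\|\nabla\delta w\|_{L_2(L_2)}$ close. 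Your scheme never uses $\nabla\rho_0\in L_3$ for well-posedness; if it closed as written, Theorem \ref{thm:rough} would hold with only $\dot B^{3-2/p}_{p,p}$ data for $v^\mtd_0$, which is precisely the step the paper could not make. You need either to supply the missing Lipschitz-in-space control of $(v^\mtd)_t$ (e.g.\ via time-weighted smoothing estimates strong enough near $t=0$, which \eqref{L2es2d-4} alone does not give) or to switch, as the paper does, to an Eulerian contraction that exploits $\nabla\rho_0\in L_3$.

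A smaller but concrete issue: in your treatment of $h\,(v^\mtd)_t$ you extract $\|h\|_{L_\infty}^{p-2}$ and then invoke only ``$v^\mtd_t\in L_p(L_p(\R^2))$''. After that extraction the remaining integral is $\int_{\R^2}|v^\mtd_t|^p\,\|h(x_\mh,\cdot)\|_{L_2(\R)}^2\,\dd x_\mh$, and the weight $\|h(x_\mh,\cdot)\|_{L_2(\R)}^2$ is only in $L_1(\R^2_\mh)$ (with norm $\|h\|_{L_2(\R^3)}^2$); mixed norms like $L^q_{x_\mh}L^2_{x_3}$ with $q>2$ are \emph{not} controlled by $\|h\|_{L_2\cap L_\infty(\R^3)}$ (consider $h$ equal to a constant on a long thin vertical cylinder). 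So the H\"older pairing forces the planar factor to be measured in $L_\infty(\R^2)$, i.e.\ one needs $\partial_t v^\mtd\in L_p(\R^+;L_\infty(\R^2))$, which is what the paper uses in the bound of $F_3$ (via $W^1_p(\R^2)\hookrightarrow L_\infty$ and \eqref{v2d-es1}, or the decay estimate \eqref{decHNS2d4}); as stated, your step does not close, though it is repairable.
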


The above Theorem is a version of result for the 3D homogeneous Navier-Stokes system (with the constant density) like in \cite{CG06}.
What is important, from the viewpoint of regularity of density, Theorem \ref{thm:regu} is not optimal.
It shall be underlined that the extra regularity of density $\nabla \rho_0\in L_3$ is needed to control the uniqueness only.

Our second result removes this extra regularity condition of density and also shows the global stability result.
\begin{theorem}\label{thm:rough}
  Let $p>3$ and $v^\mtd=(v^\mtd_1,v^\mtd_2,v^\mtd_3)$ be the unique strong solution to 2D (HNS) system \eqref{HNS} on $\R^2$ with $v^\mtd_0\in L_2\cap \dot B^{4-2/p}_{p,p}(\R^2)$.
Assume that $\rho_0-1\in L_2\cap L_\infty(\R^3)$ and $v_0-v^\mtd_0\in L_2\cap \dot B^{2-2/p}_{p,p}(\R^3)$.
There exist two generic constants $c_0,C'>0$ such that if the initial data $(\rho_0,v_0)$ satisfies \eqref{t1},
then we have a unique global-in-time solution $(\rho,v)$ to the system (INS) which obeys the uniform estimates \eqref{t2} and \eqref{t3}.
\end{theorem}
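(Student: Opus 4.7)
The plan is a three step one: approximation of the rough density by smooth ones to reduce to Theorem \ref{thm:regu}, uniform a priori estimates that do not see $\nabla\rho$, and then uniqueness via a Lagrangian reformulation. The elevated regularity assumption $v^\mtd_0\in \dot B^{4-2/p}_{p,p}(\R^2)$ is what pays for the loss of the density gradient bound; it upgrades $v^\mtd$ in maximal regularity by one extra derivative and gives control of $v^\mtd_t$, $\nabla^2 v^\mtd$ and $\nabla_\mh v^\mtd \cdot \nabla_\mh v^\mtd$ in spaces that do not require Hölder information on $h$.

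First I would mollify the density at scale $\varepsilon$, producing data $\rho_0^\varepsilon$ with $\nabla\rho_0^\varepsilon\in L_3$ and such that $(\rho_0^\varepsilon-1,v_0-v^\mtd_0)$ still satisfies the smallness bound \eqref{t1}. Theorem \ref{thm:regu} produces global solutions $(\rho^\varepsilon,v^\varepsilon)$ obeying \eqref{t2}--\eqref{t3} uniformly in $\varepsilon$; this uniformity is possible precisely because the right-hand side of \eqref{t1} is independent of $\|\nabla\rho_0\|_{L_3}$. Next I would pass to the limit: the maximal regularity bound \eqref{t3} combined with Aubin--Lions gives strong compactness of $w^\varepsilon$ on compact sets, $h^\varepsilon$ converges weakly-$\ast$ in $L_\infty(0,\infty;L_2\cap L_\infty)$, and DiPerna--Lions renormalisation lets me pass to the limit in the transport equation $\partial_t\rho^\varepsilon+v^\varepsilon\cdot\nabla\rho^\varepsilon=0$. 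Lower semicontinuity then propagates \eqref{t2} and \eqref{t3} to the limit $(\rho,v)$, giving existence.

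For uniqueness, which is the crux, I would pass to Lagrangian coordinates $y\mapsto X(t,y)$ defined by $\partial_t X=v(t,X)$, $X(0,y)=y$. Because density is transported, $\tilde\rho(t,y):=\rho(t,X(t,y))=\rho_0(y)$ is stationary, so the rough $\rho_0$ appears only as a coefficient and no derivative of it enters. The momentum equation becomes a perturbed parabolic system for $(\tilde v,\tilde q)$ whose coefficients depend on $\nabla_y X$ and its inverse; by \eqref{t3} and the smallness of \eqref{t1} these stay within a small neighbourhood of the identity in $L_\infty(0,\infty;L_\infty)$. Comparing two solutions in the Lagrangian frame and invoking maximal regularity of the Stokes operator, the difference $\delta\tilde v:=\tilde v-\tilde v'$ satisfies an equation whose nonlinear part is small on short time intervals, so Gronwall closes uniqueness on $[0,T_\ast]$; a continuation argument, driven by the global smallness of \eqref{t3}, then extends it to $[0,\infty)$.

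The principal obstacle is the Lagrangian step. The change of variables requires $\nabla v\in L_1(0,T;L_\infty(\R^3))$; since $p>3$, this follows from \eqref{t3} only after time integration through the embedding of $\dot B^{2-2/p}_{p,p}$ into $W^{1,\infty}$. Quantitatively bounding $\nabla X-\Id$ uniformly in time so that the cofactor matrix and the pressure operator remain coercive, and balancing the two independent smallness parameters $\|\rho_0-1\|_{L_\infty}$ and $\|w\|_{L_p(0,\infty;W^2_p)}$ while controlling the nonlocal pressure term, is where the bulk of the technical work will lie.
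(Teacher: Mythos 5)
Your existence step matches the paper's: mollify $\rho_0$, apply Theorem \ref{thm:regu} with constants independent of $\varepsilon$ (since the right-hand side of \eqref{t1} does not involve $\nabla\rho_0$), and pass to the limit by weak-$*$ compactness of $h^\varepsilon$ plus strong/pointwise compactness of the velocities, the only delicate term $\rho^\varepsilon\partial_t v^\varepsilon$ being handled through the conservative form $\partial_t(\rho^\varepsilon v^\varepsilon)+\divg(\rho^\varepsilon v^\varepsilon\otimes v^\varepsilon)$. The gap is in the uniqueness step. After passing to Lagrangian coordinates, the difference $\delta\bar w=\bar w_1-\bar w_2$ does \emph{not} satisfy the homogeneous twisted divergence condition: it solves $\divg_y(A_{v_1}\delta\bar w)=\divg\big((A_{v_1}-A_{v_2})\bar w_2\big)$, see \eqref{LpertS-del}. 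Consequently you can neither test the difference equation by $\delta\bar w$ (the pressure term $A_{v_1}^{\mathrm T}\nabla_y\delta\bar q$ does not drop out) nor invoke maximal regularity of the Stokes operator in any straightforward way, since the relevant operator is the variable-coefficient twisted one with a non-solenoidal constraint, and the source contains $h_0\,\partial_t\delta\bar w$ and $(A_{v_1}^{\mathrm T}-A_{v_2}^{\mathrm T})\nabla_y\bar q_2$ with merely bounded $h_0$. Your proposal of ``maximal regularity $+$ Gronwall'' does not address this, and this is precisely the crux of the proof. The paper resolves it by splitting $\delta\bar w=z^1+z^2$ as in \eqref{z1z2}, where $z^1$ is constructed by the twisted divergence equation of Lemma \ref{lem:divEq} (a fixed-point argument giving $z^1\in L_\infty L_2$, $\nabla z^1\in L_2L_2$ and, crucially, only $\partial_t z^1\in\mathcal N_p(T)$), so that $z^2$ satisfies $\divg_y(A_{v_1}z^2)=0$ and an \emph{energy} estimate for $z^2$ closes, with the terms involving $\partial_t z^1$ handled by duality/interpolation in $\mathcal N_p(T)$ and the rough coefficient $h_0$ absorbed by its $L_\infty$ smallness.

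Two further points that your sketch glosses over. First, the hypothesis $v^\mtd_0\in \dot B^{4-2/p}_{p,p}(\R^2)$ is used in a specific place: through Lemma \ref{lem:2DNS-2.2} (estimate \eqref{v2d-es2}) it makes $\nabla_\mh v^\mtd$ Lipschitz in space and gives $\partial_t\nabla_\mh v^\mtd$, $\nabla^2_\mh v^\mtd$ in $L_1(0,T;L_\infty)$, which is exactly what is needed to estimate the composition differences $\nabla_\mh v^\mtd(t,X_{v_1,\mh})-\nabla_\mh v^\mtd(t,X_{v_2,\mh})$, $\partial_t v^\mtd(t,X_{v_1,\mh})-\partial_t v^\mtd(t,X_{v_2,\mh})$ appearing in $\delta\bar F$ by $\|X_{v_1}-X_{v_2}\|_{L_\infty_tL_2}\lesssim T\|\delta\bar w\|_{L_\infty_tL_2}$; ``more maximal regularity for $v^\mtd$'' alone is not the mechanism. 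Second, the smallness of $\mathrm{Id}-A_{v_i}$ is not global in time as you claim: $\int_0^T\|\nabla v\|_{L_\infty}\,\dd t\lesssim T^{1-1/p}+T$ by \eqref{eq:Lip}, so it is small only for $T$ small; uniqueness is therefore first local, and the extension to $[0,\infty)$ requires re-initializing the Lagrangian coordinates at each later time $T'$ (the flow $\widetilde X_{v_i}$ with $\widetilde X_{v_i}(T',y)=y$) and repeating the argument, the step length depending only on globally controlled norms. Your continuation remark points in this direction but needs this re-parametrization to be made precise.
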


As a direct application of Theorem \ref{thm:rough}, we have the following result on the density patch problem of (INS) system \eqref{INS}.
\begin{proposition}
  Let $\mathcal{D}_0\subset\R^3$ a bounded simple and connected set,
and let $\rho_0 =1- \eta 1_{\mathcal{D}_0}$
with $\eta\in \R$ a small constant and $1_{\mathcal{D}_0}$ the standard indicator function on $\mathcal{D}_0$.
Assume that $v_0(x) = v^\mtd_0(x_\mh) + w_0(x)$ is defined as in Theorem \ref{thm:rough}.
There exist two generic constants $c_0,C'>0$ such that if $(\rho_0,v_0)$ satisfies
\begin{equation*}
  |\eta|(1+|\mathcal{D}_0|^{1/2}) + \|v_0-v^\mtd_0\|_{L_2\cap\dot B^{2-2/p}_{p,p}(\R^3)}\leq c_0\, \exp\left\{-C' \left(\|v^\mtd_0\|_{L_2\cap \dot B^{2-2/p}_{p,p}(\R^2)}^{4p} +1\right)e^{C'(1+\|v^\mtd_0\|_{L_2}^4)}\right\},
\end{equation*}
with $|\mathcal{D}_0|$ the volume of $\mathcal{D}_0$, then the (INS) system \eqref{INS} generates a unique global solution $(\rho,v)$ which satisfies \eqref{t2}-\eqref{t3}.
\end{proposition}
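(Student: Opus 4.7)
The plan is to apply Theorem \ref{thm:rough} directly, after observing that the density patch $\rho_0 = 1 - \eta\, 1_{\mathcal{D}_0}$ sits inside the low-regularity class that Theorem \ref{thm:rough} is built to handle. First, I would verify that $h_0 := \rho_0 - 1 = -\eta\, 1_{\mathcal{D}_0}$ belongs to $L_2\cap L_\infty(\R^3)$, with the elementary bound
\[
  \|\rho_0 - 1\|_{L_2 \cap L_\infty(\R^3)} \;=\; |\eta|\,\bigl(1 + |\mathcal{D}_0|^{1/2}\bigr),
\]
obtained directly from $\|1_{\mathcal{D}_0}\|_{L_\infty} = 1$ and $\|1_{\mathcal{D}_0}\|_{L_2}^{2} = |\mathcal{D}_0|$, using that $\mathcal{D}_0$ is bounded so the volume is finite.

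Next, I would substitute this identity into the smallness condition \eqref{t1} of Theorem \ref{thm:rough}. The assumption made in the proposition is precisely \eqref{t1} after this substitution, so the smallness assumption of Theorem \ref{thm:rough} is satisfied verbatim, up to at most a harmless rescaling of the universal constant $c_0$. The hypotheses on $v^\mtd_0$ and on $v_0 - v^\mtd_0$ are identical to those in Theorem \ref{thm:rough} and are simply carried over from the statement of the proposition. Invoking Theorem \ref{thm:rough} would then produce a unique global-in-time solution $(\rho,v)$ of (INS) satisfying the uniform estimates \eqref{t2}--\eqref{t3}.

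The only point worth emphasizing, and arguably the main ``obstacle'' that has in fact already been overcome upstream, is that the indicator function $1_{\mathcal{D}_0}$ does not have a distributional gradient in $L_3(\R^3)$, so the more regular statement Theorem \ref{thm:regu} cannot be applied in this setting. The proposition is therefore a corollary not of Theorem \ref{thm:regu} but specifically of Theorem \ref{thm:rough}, and its role is precisely to advertise that the latter removes any regularity requirement on $\rho_0$ beyond $L_2\cap L_\infty$. Beyond this observation, no further work is needed: the density patch problem for small jump $\eta$ falls immediately inside the scope of Theorem \ref{thm:rough}, and simple connectedness of $\mathcal{D}_0$ is not used in the existence argument at all (it only fixes the geometric interpretation of the patch).
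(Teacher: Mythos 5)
Your proposal is correct and coincides with the paper's intended argument: the proposition is stated there precisely as a direct corollary of Theorem \ref{thm:rough}, with the only substantive step being the observation that $\|\rho_0-1\|_{L_2\cap L_\infty(\R^3)}$ is controlled by $|\eta|\bigl(1+|\mathcal{D}_0|^{1/2}\bigr)$ so that the hypothesis of the proposition implies \eqref{t1}. The only cosmetic caveat is that whether your displayed relation is an equality or merely an inequality depends on the convention for the $L_2\cap L_\infty$ norm (max versus sum), but either way the bound needed to invoke Theorem \ref{thm:rough} holds, so nothing is affected.
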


By virtue of \eqref{t3}, Lemmas \ref{lem:2DNS-L2} - \ref{lem:2DNS-2} below and the Sobolev embedding, we deduce that for $\gamma\in(0, 1-3/p] $, $p>3$ and for every $T>0$,
\begin{equation*}\label{eq:v-es}
  \|v\|_{L_p(0,T; C^{1,\gamma}(\R^3))} \leq \|v^\mtd\|_{L_p(0,\infty; C^{1,\gamma}(\R^2))} + C \|v-v^\mtd\|_{L_p(0,\infty; \dot W^2_p)}
  + C T^{1/p} \|v-v^\mtd\|_{L_\infty(0,\infty; L_2)}<\infty.
\end{equation*}
An important consequence of this estimate is that if initial boundary $\partial \mathcal{D}_0$ is $C^{1,\gamma}$-regular, then its evolution
$\partial \mathcal{D}_0(t) = X_v(t,\partial \mathcal{D}_0)$ remains $C^{1,\gamma}$-regular (e.g. see \cite[Pg. 346]{GGJ}),
where $X_v(t,\cdot)$ defined as \eqref{flow} is the flow generated by $v$.
For 2D or 3D inhomogeneous Navier-Stokes system with more general density patches $\rho_0 = \rho_1 1_{\mathcal{D}_0^c} + \rho_2 1_{\mathcal{D}_0}$, $\rho_1,\rho_2>0$ constants,
one can see the recent interesting works \cite{DanZ,GGJ,LiaZ16,LiaZ18} for various results on the persistence of initial regularity of the free boundary $\partial \mathcal{D}_0$.
\\

For the proof of Theorem \ref{thm:regu}, we mainly follow the ideas from \cite{Mu01,Mu08}, where an information coming from the energy is fit to the estimates in the maximal regularity regime
(in our case in the $L_p$-spaces).
We first show the regularity and decay estimates of smooth solution to 2D (HNS), and based on which we prove the $L_2$-energy estimate of the perturbed system as well as the maximal regularity estimate in $L_p$-type spaces.
In particular, we have the regularity preservation estimate of density from the regular assumption of initial density.
Then we build a suitable approximate system of the considered perturbed equations,
and we use the a priori estimates to show the uniform estimates of the approximate solutions and get the $L_2$-strong convergence and uniqueness.
Note that the proof of uniqueness is based on the Eulerian coordinates approach, and that is why an information about the gradient of the density is required.

For the proof of Theorem \ref{thm:rough}, the existence is followed from the uniform estimates of approximative solutions established in Theorem \ref{thm:regu},
and the uniqueness is the main part. We adopt the Lagrangian coordinates approach originated in \cite{DanM12,DanM13} to prove the uniqueness of solutions in the rough density case, more precisely,
we consider the difference equation \eqref{LpertS-del} in the Lagrangian coordinates, and with the aid of Lemma \ref{lem:divEq} on the linear twisted divergence equation,
we can adapt an energy type argument to show the uniqueness.\\

At the end of this section we return to the physics and give an interpretation of our results. Theorems \ref{thm:regu} and \ref{thm:rough} say that if the initial perturbation is small (\ref{t1}), then the solutions
exist globally in time and they are close to the ones of 2D (HNS) system \eqref{HNS}. The condition (\ref{t1}) says the initial density must be close, only point-wisely,
to a constant. Hence the physical interpretation is the following: all
solutions to 2D (HNS) are stable globally in time in the inhomogeneous Navier-Stokes system regime, provided that perturbation of density is close to a constant in the $L_2\cap L_\infty$-norm.
It means the higher norms of the density
have no influence of our issue of stability. In other words, the dynamics of (INS) is the same as 2D (HNS), provided (\ref{t1}) is fulfilled.
\vskip0.1cm

The paper is organized as follows. In Section \ref{sec:prel} we give the preliminaries of our studies, introducing some basic notations, definitions and results.
In the next section we prove Theorem \ref{thm:regu}. Then in Section \ref{sec:thm2} we show Theorem \ref{thm:rough}.
In the last Appendix section we give the proof of two auxiliary results used in the previous sections.

\section{Preliminaries}\label{sec:prel}

Throughout the paper we use the standard notations: for every $p\in [1,\infty]$ and $m\in \N$, by $L_p(\R^n)$ we denote the standard Lebesgue space, by $W^m_p(\R^n)$ its natural generalization on Sobolev spaces,
by $\dot W^m_p(\R^n)$ the corresponding homogeneous Sobolev spaces, by $B^{m-n/p}_{p,p}(\R^n)$ and $\dot B^{m-n/p}_{p,p}(\R^n)$ the usual inhomogeneous and homogeneous Besov spaces (see e.g. \cite{Lad,LS,Zaj}).

\subsection{Linear estimates}
We will extensively use the following a priori estimate of the Stokes system at the whole-space case (the proof is classical, e.g. one can see \cite[Theorem 5]{DanM13}).
\begin{lemma}\label{lem:StokesR}
  Let $1<p<\infty$, $u_0\in \dot B^{2-2/p}_{p,p}(\R^n)$, $f\in L_p(\R^n\times(0,T))$.
Then the Stokes system
\begin{equation}\label{StokesR}
\begin{cases}
  \partial_t u - \Delta u + \nabla Q =f, &\quad\textrm{in   }\,\mathbb{R}^n\times (0,T), \\
  \divg u=0, &\quad\textrm{in   }\,\mathbb{R}^n\times (0,T), \\
  u|_{t=0}=u_0, &\quad\textrm{on   }\, \mathbb{R}^n,
\end{cases}
\end{equation}
has a unique solution $(u,\nabla Q)$ to \eqref{StokesR} with
\begin{equation*}
  u\in C(0,T; \dot B^{2-2/p}_{p,p}(\R^n)),\quad \textrm{and}\quad u_t,\nabla^2 u,\nabla Q\in L_p(\R^n\times (0,T)),
\end{equation*}
and the following estimate holds true
\begin{equation}\label{Lp-est2}
\begin{split}
  \sup_{0\leq t\leq T} \|u(t)\|_{\dot B^{2-2/p}_{p,p}(\R^n)} +
  \|u_t, \nabla^2 u, \nabla Q\|_{L_p(\R^n\times (0,T))}  \leq \, C \left( \|f\|_{L_p(\R^n\times (0,T))}  + \|u_0\|_{\dot B^{2-2/p}_{p,p}(\R^n)}\right),
\end{split}
\end{equation}
where $C$ is a positive constant independent of $T$.
\end{lemma}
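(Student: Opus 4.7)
The plan is to apply the Leray projector $\mathbb{P} = I - \nabla(-\Delta)^{-1}\mathrm{div}$ onto divergence-free vector fields to kill the pressure gradient and reduce \eqref{StokesR} to the heat equation $\partial_t u - \Delta u = \mathbb{P} f$ with divergence-free initial datum $u_0$. Since $\mathbb{P}$ is a Fourier multiplier with symbol that is smooth away from the origin and homogeneous of degree zero, Mikhlin's theorem gives its boundedness on $L_p(\R^n)$ for $1<p<\infty$, hence on $L_p(\R^n \times (0,T))$ as well, and on $\dot B^{2-2/p}_{p,p}(\R^n)$ by real interpolation.

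Next I would split $u = u_1 + u_2$, where $u_1$ solves the homogeneous heat equation $\partial_t u_1 - \Delta u_1 = 0$ with datum $u_0$, and $u_2$ solves $\partial_t u_2 - \Delta u_2 = \mathbb{P} f$ with $u_2|_{t=0}=0$. For $u_1$, I would use the heat-semigroup characterization of the homogeneous Besov space (equivalently, the identification of $\dot B^{2-2/p}_{p,p}(\R^n)$ as the real interpolation space $(L_p, \dot W^2_p)_{1-1/p, p}$), which gives
\begin{equation*}
  \sup_{t\geq 0}\|u_1(t)\|_{\dot B^{2-2/p}_{p,p}} + \|\partial_t u_1, \nabla^2 u_1\|_{L_p(\R^n \times (0,\infty))} \leq C\|u_0\|_{\dot B^{2-2/p}_{p,p}}.
\end{equation*}
For $u_2$, the maximal $L_p$-regularity of the heat semigroup on $\R^n$ (via Mikhlin applied to the symbols $i\tau(i\tau+|\xi|^2)^{-1}$ and $|\xi|^2(i\tau+|\xi|^2)^{-1}$ in the space-time Fourier variables, or Weis' operator-valued theorem) yields
\begin{equation*}
  \|\partial_t u_2, \nabla^2 u_2\|_{L_p(\R^n\times(0,T))} \leq C \|\mathbb{P}f\|_{L_p(\R^n\times(0,T))} \leq C\|f\|_{L_p(\R^n\times(0,T))},
\end{equation*}
and the trace theorem for the maximal-regularity space $\{v : \partial_t v, \nabla^2 v \in L_p(\R^n\times(0,T))\}$ supplies the corresponding $\dot B^{2-2/p}_{p,p}$-bound for $u_2(t)$.

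Having $u = u_1 + u_2$, the pressure is recovered from $\nabla Q = f - \partial_t u + \Delta u$. Taking the divergence of the momentum equation and using $\mathrm{div}\,u = 0$ yields $-\Delta Q = \mathrm{div}\, f$, so $\nabla Q = -\nabla\Delta^{-1}\mathrm{div}\,f = (I-\mathbb{P})f$, which is controlled in $L_p$ by $\|f\|_{L_p}$ via Mikhlin. Uniqueness is immediate: the difference of two solutions solves \eqref{StokesR} with zero data; applying $\mathbb{P}$ reduces it to the heat equation with zero datum and zero forcing, so it vanishes, and then $\nabla(Q-Q')=0$ as well. The only genuinely delicate point — and the one worth being careful about — is making the constant $C$ independent of the time horizon $T$; this is achieved by performing all estimates on $\R^n\times(0,\infty)$ after extending $f$ by zero past $T$, since the whole-space maximal regularity constants are scale-invariant.
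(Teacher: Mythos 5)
Your proof is correct and is precisely the classical argument that the paper itself does not write out but delegates to the cited work of Danchin--Mucha: Leray projection to reduce to the heat equation, maximal $L_p$-regularity together with the real-interpolation (trace) characterization of $\dot B^{2-2/p}_{p,p}$ for the two pieces of the splitting, recovery of the pressure via $\nabla Q=(I-\mathbb{P})f$, and zero-extension of $f$ to $(0,\infty)$ to make $C$ independent of $T$. The only point worth stating explicitly is the implicit compatibility condition $\divg u_0=0$, which you correctly assume when applying $\mathbb{P}$ to the initial datum.
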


Before we present the next lemma, we introduce the following auxiliary functional space: denote by $\mathcal{N}_p(T)$
the sum of two spaces $L_{\frac{2p}{2p-n}}(0,T; L_{\frac{2p}{p+2}}(\R^n))+ L_p(0,T;L_2(\R^n))$ expressed as
\begin{equation}\label{Np}
  \mathcal{N}_p(T):= \left\{f \big| f=a+b,a\in L_{\frac{2p}{2p-n}}(0,T; L_{\frac{2p}{p+2}}(\R^n)) , b\in L_2(0,T;L_2(\R^n))\right\}
\end{equation}
with the norm
\begin{equation*}
  \|f\|_{\mathcal{N}_p(T)}:= \inf \left\{\|a\|_{L_{\frac{2p}{2p-n}}(0,T; L_{\frac{2p}{p+2}})} + \|b\|_{L_2(0,T;L_2)} \big|
  f= a +b, a\in L_{\frac{2p}{2p-n}}(0,T; L_{\frac{2p}{p+2}}) , b\in L_2(0,T;L_2) \right\}.
\end{equation*}
Clearly, $\mathcal{N}_p(T)$ with the norm $\|\cdot\|_{\mathcal{N}_p(T)}$ is a Banach space.
Then we give the following lemma on the linear twisted divergence equation, which plays a key role in the uniqueness part of Theorem \ref{thm:rough}.
\begin{lemma}\label{lem:divEq}
  Let $p>n$ be fixed, and $A$ be a matrix valued function on $\R^n\times [0,T]$ satisfying $\mathrm{det}\,A\equiv 1$.
Let $g:\R^n\times [0,T]\rightarrow \R$ be a function given by
\begin{equation}
  g=\divg R
\end{equation}
which satisfies the following conditions
\begin{equation}\label{g-cd}
  g\in L_2(\R^n\times [0,T])\;\; \textrm{and}\;\; R\in L_\infty(0,T; L_2(\R^n)) \;\;\textrm{and}\;\; R_t\in L_{\frac{2p}{2p-n}}(0,T; L_{\frac{2p}{p+2}}(\R^n)).
\end{equation}
There exists a constant $c>0$ depending only on $n$, such that if
\begin{equation}\label{A-cd}
  \|\mathrm{Id}-A\|_{L_\infty(\R^n\times(0,T))} + \|A_t\|_{L_2(0,T; L_\infty(\R^n))} \leq c,
\end{equation}
then the twisted divergence equation
\begin{equation}\label{tDIVeq}
  \divg(A\, z) = g\quad \textrm{in} \quad\R^n\times [0,T]
\end{equation}
admits a solution $z$ in the space 
\begin{equation}
  X_T:= \left\{f|\,f\in L_\infty(0,T;L_2(\R^n)),\, \nabla f\in L_2(0,T;L_2(\R^n)),\,\, f_t\in \mathcal{N}_p(T) \right\},
\end{equation}
which satisfies the following estimates for some constant $C=C(n)$:
\begin{equation}\label{DIVest}
\begin{split}
  & \|z\|_{L_\infty(0,T;L_2(\R^n))}\leq C \|R\|_{L_\infty(0,T;L_2(\R^n))},\quad \|\nabla z\|_{L_2(0,T;L_2(\R^n))}\leq C \|g\|_{L_2(0,T;L_2(\R^n))}, \\
  & \|z_t\|_{\mathcal{N}_p(T)} \leq C \|R\|_{L_\infty(0,T;L_2(\R^n))} + C \|R_t\|_{L_{\frac{2p}{2p-n}}(0,T;L_{\frac{2p}{p+2}(\R^n)})}.
\end{split}
\end{equation}
\end{lemma}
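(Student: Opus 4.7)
The plan is a Banach fixed-point argument on $X_T$, viewing the equation as a small perturbation of the plain divergence problem. I would write $A=\Id+B$ with $B:=A-\Id$, so that $\divg(Az)=g$ becomes $\divg z=\divg(R-Bz)$, and use as a right inverse the Helmholtz-type operator $\mathcal{S}(F):=\nabla\Delta^{-1}\divg F=(\Id-\mathbb{P})F$, where $\mathbb{P}$ is the Leray projector. Calderón--Zygmund theory gives the mapping properties
\[
\|\mathcal{S}(F)\|_{L_\infty(0,T;L_2)}\le C\|F\|_{L_\infty(0,T;L_2)},\qquad
\|\nabla\mathcal{S}(F)\|_{L_2(0,T;L_2)}\le C\|\divg F\|_{L_2(0,T;L_2)},
\]
together with $\|\partial_t\mathcal{S}(F)\|_{L_r(0,T;L_s)}\le C\|F_t\|_{L_r(0,T;L_s)}$ for any $1<r,s<\infty$, and these are the only linear estimates needed (note that the exponents $s=\tfrac{2p}{p+2}$ and $r=\tfrac{2p}{2p-n}$ both lie in $(1,\infty)$ as soon as $p>n$).

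Next I would iterate $z_0:=0$, $z_{k+1}:=\mathcal{S}(R-Bz_k)$, and look for the fixed point. The $L_\infty(0,T;L_2)$ bound on $z_{k+1}$ follows from the first estimate above together with $\|B\|_\infty\le c$. For the gradient bound, I need to place $\divg(Bz_k)$ in $L_2(\R^n\times(0,T))$; here I would invoke the Piola-type identity $\sum_i\partial_iA_{ij}=0$ in $\mathcal{D}'$, which in the Lagrangian applications motivating this lemma is encoded by $\det A\equiv 1$ (the matrix $A$ being the cofactor of the flow gradient). That identity converts the distributional divergence $\divg(Bu)$ into the pointwise product $B:\nabla u$, giving
\[
\|\divg(Bz_k)\|_{L_2(0,T;L_2)}\le\|B\|_\infty\|\nabla z_k\|_{L_2(0,T;L_2)}\le c\,\|\nabla z_k\|_{L_2(0,T;L_2)},
\]
so the gradient iteration reads $\|\nabla z_{k+1}\|_{L_2(0,T;L_2)}\le C\|g\|_{L_2(0,T;L_2)}+Cc\,\|\nabla z_k\|_{L_2(0,T;L_2)}$.

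For the $\mathcal{N}_p(T)$-bound on the time derivative, differentiating gives $\partial_tz_{k+1}=\mathcal{S}(R_t-B_tz_k-B\partial_tz_k)$. The $R_t$ contribution is placed into the $L_{\frac{2p}{2p-n}}(0,T;L_{\frac{2p}{p+2}})$ summand of $\mathcal{N}_p(T)$; the $B_tz_k$ term sits in $L_2(\R^n\times(0,T))\hookrightarrow\mathcal{N}_p(T)$ with norm bounded by $\|B_t\|_{L_2(0,T;L_\infty)}\|z_k\|_{L_\infty(0,T;L_2)}\le c\,\|z_k\|_{L_\infty(0,T;L_2)}$; and for $B\partial_tz_k$ I would fix a decomposition $\partial_tz_k=a+b$ nearly realising the $\mathcal{N}_p(T)$-norm and observe that multiplication by $B$ preserves each summand space with constant at most $\|B\|_\infty\le c$, so that $\|B\partial_tz_k\|_{\mathcal{N}_p(T)}\le c\,\|\partial_tz_k\|_{\mathcal{N}_p(T)}$. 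Choosing $c$ small enough (depending only on $n$ through the Calderón--Zygmund constants) makes all three estimates contractive, so the $z_k$ form a Cauchy sequence in $X_T$ and the limit $z$ solves $\divg(Az)=g$ together with the bounds \eqref{DIVest}.

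The main obstacle, in my view, is justifying the identity $\sum_i\partial_iA_{ij}=0$: without it, $\divg(Bu)$ would carry a stray $(\divg B)u$ piece that cannot be absorbed using only $\|B\|_\infty\le c$ together with $\nabla z\in L_2$, and the gradient iteration would not close. The hypothesis $\det A\equiv 1$ is there precisely to guarantee, in the Lagrangian-coordinate context in which this lemma is applied, that the column-wise distributional divergence of $A$ vanishes; this algebraic-differential structural fact, rather than any analytic estimate, is where I expect the delicate point of the proof to lie.
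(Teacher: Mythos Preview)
Your proposal is correct and follows essentially the same approach as the paper: the paper also runs a Banach fixed-point argument for the map $z\mapsto\nabla\Delta^{-1}\divg\big((\Id-A)z+R\big)$ on $X_T$, establishes the same three estimates, and closes the gradient bound by invoking the identity $\divg(Az)=A^{\mathrm{T}}:\nabla z$ (exactly your Piola-type condition), which it dispatches in one line with a citation to \cite{DanM12}. Your identification of this structural identity as the crux is accurate, and your handling of the $\mathcal{N}_p(T)$ norm via a near-optimal splitting $\partial_t z_k=a+b$ matches the paper's treatment.
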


\begin{proof}[Proof of Lemma \ref{lem:divEq}]
  For the proof, we mainly use the spirit of the corresponding part in \cite{DanMdiv} (see also \cite[Lemma 5.2]{DanM17}).

For any $z\in X_T$, we define
\begin{equation*}
  \Psi(z)\equiv \nabla \Delta^{-1}\divg\big( (\mathrm{Id}-A)z + R\big).
\end{equation*}
It is easy to see that $\Psi(z)$ satisfies the following linear equation
\begin{equation*}
  \divg(\Psi(z))= \divg\big( (\mathrm{Id}-A)z + R\big).
\end{equation*}
First we prove that $\Psi$ maps $X_T$ to $X_T$. From the 0-order operator $\nabla\Delta^{-1}\divg$ maps $L^2(\R^n)$ to $L^2(\R^n)$ with norm 1, we get
\begin{equation*}
\begin{split}
  \|\Psi(z)\|_{L_\infty(0,T; L_2(\R^n))} & \leq \|(\mathrm{Id}-A)z + g\|_{L_\infty(0,T; L_2(\R^2))} \\
  & \leq \|\mathrm{Id}-A\|_{L_\infty(0,T;L_\infty(\R^n))} \|z\|_{L_\infty(0,T; L_2(\R^n))} + \|g\|_{L_\infty(0,T; L_2(\R^2))}.
\end{split}
\end{equation*}
Noting that $\mathrm{det}\,A\equiv 1$ implies that (see e.g. \cite[Appendix]{DanM12})
\begin{equation}
  \divg(Az)=A^{\textrm{T}}:\nabla z,
\end{equation}
we obtain
\begin{equation*}
\begin{split}
  \|\nabla \Psi(z)\|_{L^2(0,T;L^2(\R^n))} & \leq \|(\mathrm{Id}-A^{\textrm{T}}): \nabla z\|_{L_2(0,T;L_2(\R^n))} + \|g\|_{L_2(0,T; L_2(\R^n))} \\
  & \leq \|\mathrm{Id}-A\|_{L_\infty(0,T; L_\infty(\R^n))} \|\nabla z\|_{L_2(0,T; L_2(\R^n))} + \|g\|_{L_2(0,T;L_2(\R^n))}.
\end{split}
\end{equation*}
Finally, due to that $((\mathrm{Id}-A)z)_t = (\mathrm{Id}-A)z_t + A_t z$, we find
\begin{align*}
  \|(\Psi(z))_t\|_{\mathcal{N}_p(T)} \leq & \|(\mathrm{Id}-A)z_t \|_{\mathcal{N}_p(T)}
  + \|A_t z\|_{L_2(0,T; L_2)} + \|R_t\|_{L_{\frac{2p}{2p-n}}(0,T; L_{\frac{2p}{p+2}})} \\
  \leq & \|\mathrm{Id}-A\|_{L_\infty(0,T;L_\infty)} \|z_t\|_{\mathcal{N}_p(T)} +  \|A_t\|_{L_2(0,T; L_\infty)} \|z\|_{L_\infty(0,T;L_2)}
  + \|R_t\|_{L_{\frac{2p}{2p-n}}(0,T; L_{\frac{2p}{p+2}})}.
\end{align*}
Hence the above estimates guarantee $\Psi:X_T\rightarrow X_T$. Then, for any $(z_1,z_2)\in X_T\times X_T$,
from $\Psi(z_1)-\Psi(z_2)=\nabla \Delta^{-1}\divg\big( (\mathrm{Id}-A)(z_1-z_2)\big)$ and by arguing as the above deductions, we infer that provided $c$ in \eqref{A-cd} is small enough,
\begin{align*}
  \|\Psi(z_1)-\Psi(z_2)\|_{X_T}\leq C\big(\|\mathrm{Id}-A\|_{L_\infty(0,T;L_\infty)} + \|A_t\|_{L_2(0,T; L_\infty)}\big) \|z_1-z_2\|_{X_T} \leq \frac{1}{2}\|z_1-z_2\|_{X_T}.
\end{align*}
Therefore, the classical Banach contraction mapping theorem ensures that there is a solution in $X_T$ to the equation $\Psi(z)=z$, which moreover satisfies the divergence equation \eqref{tDIVeq}.
Furthermore, coming back to the above estimates in the case $\Psi(z)=z$ leads to the desired inequalities \eqref{DIVest}.
\end{proof}

\subsection{The Lagrangian coordinates}\label{subsec:Lag}

The use of Lagrange coordinates plays a fundamental role in our proof of Theorem \ref{thm:rough}.
In this subsection, we introduce some notations and basic results related to the Lagrangian coordinates.

Let $X_v(t,y)$ solve the following ordinary differential equation (treating $y$ as a parameter)
\begin{equation}\label{flow}
  \frac{\dd X_v(t,y)}{\dd t} = v (t, X_v(t,y)),\quad X_v(t,y)|_{t=0}=y,
\end{equation}
which leads to the following relation
\begin{equation}\label{flow2}
  X_v(t,y) = y + \int_0^t v(\tau, X_v(\tau,y))\dd \tau.
\end{equation}

We list some basic properties for the Lagrangian change of variables.
\begin{lemma}\label{lem:Lag}
  Let $\Omega=\R^n$, $n=2,3$. Assume that $v\in L_1(0,T; \dot  W^1_\infty(\Omega))$. Then the system \eqref{flow} has a unique solution $X_v(t,y)$ on the time interval $[0,T]$
satisfying $\nabla_y X_v\in L_\infty(0,T; L_\infty)$ with
\begin{equation}\label{DXvest}
  \|\nabla_y X_v(t)\|_{L_\infty(\Omega)} \leq \exp \set{\int_0^t \|\nabla_x v(\tau)\|_{L_\infty(\Omega)}\dd \tau}.
\end{equation}
Furthermore, denoting by $\bar v(t,y):= v(t,X_v(t,y))$, we have
\begin{equation}\label{Xv}
  X_v(t,y)=y+ \int_0^t\bar v(\tau,y)\dd\tau,
\end{equation}
so that
\begin{equation}\label{DXveq}
  \nabla_y X_v(t,y)= \mathrm{Id} + \int_0^t \nabla_y \bar v(\tau,y)\dd \tau.
\end{equation}
Let $Y(t,\cdot)$ be the inverse diffeomorphism of $X(t,\cdot)$, then
$\nabla_x Y_v(t,x)= \left( \nabla_y X_v(t,y)\right)^{-1}$ with $x=X_v(t,y)$,
and if
\begin{equation}\label{vbar-cd}
  \int_0^t\|\nabla_y \bar v(\tau)\|_{L_\infty(\Omega)}\dd \tau\leq \frac{1}{2},
\end{equation}
we have
\begin{equation}\label{Avbd1}
  |\nabla_x Y_v(t,x) -\Id|\leq 2 \int_0^t |\nabla_y \bar v(\tau, y)|\dd \tau.
\end{equation}
\end{lemma}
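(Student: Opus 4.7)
The plan is to establish each assertion in turn by standard ODE arguments. First I would invoke the Cauchy--Lipschitz (Picard--Lindel\"of) theorem in its Carath\'eodory form: since $v\in L_1(0,T;\dot W^1_\infty(\Omega))$, the right-hand side of \eqref{flow} is Lipschitz in the spatial variable with a Lipschitz constant $\|\nabla_x v(t)\|_{L_\infty}$ that is integrable in time on $[0,T]$. This yields a unique flow $X_v(\cdot,y)\in C([0,T];\R^n)$ for every $y\in\R^n$. Differentiating \eqref{flow} formally in $y$ produces the linear matrix ODE
\[
  \partial_t \nabla_y X_v(t,y) \,=\, (\nabla_x v)(t,X_v(t,y))\,\nabla_y X_v(t,y),\qquad \nabla_y X_v(0,y)=\Id,
\]
and Gr\"onwall's inequality yields \eqref{DXvest}. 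The rigorous justification of this $y$-differentiation is a standard finite-difference argument based on the preceding Lipschitz estimate.

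Second, relation \eqref{Xv} follows by integrating the ODE \eqref{flow} in $t$ and rewriting the integrand in terms of $\bar v(\tau,y):=v(\tau,X_v(\tau,y))$. Differentiating \eqref{Xv} in $y$ and interchanging $\partial_y$ with $\int_0^t(\cdot)\,\dd\tau$, justified by the uniform $L_\infty$ bound on $\nabla_y X_v$ obtained above, yields \eqref{DXveq}.

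Third, for the inverse diffeomorphism I would show that $X_v(t,\cdot)\colon\R^n\to\R^n$ is a bijection: injectivity is immediate from the uniqueness of solutions to \eqref{flow}, and surjectivity follows by solving the same ODE backwards in time starting from any target point. Since $\nabla_y X_v(t,y)$ satisfies a linear matrix ODE with temporally-integrable coefficient, the matrix is invertible at every $(t,y)$; the chain rule applied to the identity $Y_v(t,X_v(t,y))=y$ then produces $\nabla_x Y_v(t,x) = (\nabla_y X_v(t,y))^{-1}$ at $x=X_v(t,y)$.

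Finally, for \eqref{Avbd1} I write $\nabla_y X_v(t,y) = \Id + A(t,y)$ with $A(t,y):=\int_0^t \nabla_y \bar v(\tau,y)\,\dd\tau$, exactly as in \eqref{DXveq}. The hypothesis \eqref{vbar-cd} forces $|A(t,y)|\le 1/2$, so the Neumann series $(\Id+A)^{-1}=\sum_{k\ge 0}(-A)^k$ converges absolutely and yields
\[
  |\nabla_x Y_v(t,x) - \Id| \,=\, |(\Id+A)^{-1} - \Id| \,\le\, \frac{|A|}{1-|A|}\,\le\, 2|A| \,\le\, 2\int_0^t |\nabla_y \bar v(\tau,y)|\,\dd\tau,
\]
which is precisely \eqref{Avbd1}. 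The main obstacle is purely technical, not conceptual: the differentiation under the integral and the interchange of limits in steps one and two must be justified, and standard ODE theory accommodates this given the $L_1$-in-time Lipschitz bound on $v$.
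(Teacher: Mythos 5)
Your proposal is correct and follows essentially the same route as the paper: the existence, uniqueness, and identities \eqref{DXvest}--\eqref{DXveq} are the standard Cauchy--Lipschitz/Gr\"onwall facts that the paper simply cites (Proposition 1 of Danchin--Mucha), and your final step is exactly the paper's Neumann-series expansion $\nabla_x Y_v=\sum_{k\ge 0}(-1)^k\bigl(\int_0^t\nabla_y\bar v\,\dd\tau\bigr)^k$ under \eqref{vbar-cd}, with your bound $|A|/(1-|A|)\le 2|A|$ identical in content to summing the geometric series. No gaps.
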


\begin{proof}[Proof of Lemma \ref{lem:Lag}]
The proof is standard, and one can refer to \cite[Proposition 1]{DanM13}. We only note that as long as $\nabla_y X_v -\Id=\int_0^t \nabla_y \bar v(\tau,y)\dd \tau$
is sufficiently small so that \eqref{vbar-cd} holds, we have
\begin{equation}\label{DxY}
  \nabla_x Y_v = \left( \Id + (\nabla_y X_v -\Id)\right)^{-1} = \sum_{k=0}^\infty (-1)^k \left( \int_0^t \nabla_y \bar v(\tau,y)\dd \tau\right)^k,
\end{equation}
which immediately leads to \eqref{Avbd1}.
\end{proof}

By using the Lagrangian coordinates introduced as above, we set
\begin{equation}
\begin{split}
  & \bar{h}(t,y):=h(t, X_v(t,y)),\quad \bar{w}(t,y):= w(t, X_v(t,y)),\quad \\
  & \overline{q}(t,y):=q(t,X_v(t,y)),\quad\bar F(t,y):= F(t, X_v(t,y)),
\end{split}
\end{equation}
then according to the deduction in \cite{DanM12} or \cite{DanM13}, the perturbed system \eqref{pertS1} recasts in
\begin{equation}\label{LpertS}
\begin{cases}
  \bar{h}_t =0, \\
  \bar{w}_t -\divg\left( A_v A_v^{\textrm{T}} \nabla_y \bar{w}\right) + A_v^{\textrm{T}}\nabla_y \overline{q} = \bar F, \\
  \divg_y\left( A_v \bar{w} \right) =0, \\
  \bar{h}|_{t=0}= h_0,\quad \bar{w}|_{t=0}= w_0,
\end{cases}
\end{equation}
where we adopt the following notation
\begin{equation}\label{Av}
  A_v(t,y):=(\nabla_y X_v(t,y))^{-1}.
\end{equation}

As pointed out by \cite{DanM12,DanM13}, under the condition \eqref{vbar-cd}, the system \eqref{LpertS} in the Lagrangian coordinates is equivalent to the system \eqref{pertS1} in the Eulerian coordinates.
\vskip0.2cm

In the sequel we also denote
\begin{equation}\label{}
  \overline{v^\mtd}(t,y):= v^\mtd(t,X_{v,\mh}(t,y)), \quad\textrm{and}\quad \bar v(t,y):= v(t,X_v(t,y)),
\end{equation}
with $X_{v,\mh}(t,y)=\left( X_{v,1}(t,y), X_{v,2}(t,y)\right)$, then
\begin{equation}\label{bar-v}
  \bar v(t,y)= \overline{v^\mtd}(t,y) + \bar{w}(t,y).
\end{equation}
The first equation of \eqref{LpertS} implies
\begin{equation}\label{bar-h}
\bar{h}(t,y)\equiv h_0(y),\quad \forall t\in [0,T],
\end{equation}
and then thanks to the formula $w_t(t,X_v(t,y))= \bar{w}_t(t,y)- (v\cdot\nabla  w)(t, X_v(t,y))$, the formula of $F$ \eqref{pertS2} and \eqref{bar-v}, we further have
\begin{equation}\label{barFi}
\begin{split}
  \bar F(t,y)=& -h_0(y)\, (v^\mtd)_t\left( t,X_{v,\mh}\right) -h_0(y)\,  w_t\left( t,X_v\right) \\
  & -h_0(y) \,( v\cdot\nabla v)(t,X_v)- \left(  w_\mh\cdot\nabla_\mh v^\mtd\right) \left( t,X_v\right) \\
  = & -h_0(y)\, (v^\mtd)_t\left( t,X_{v,\mh}\right) - h_0(y)\, \bar{w}_t\left( t,y\right) \\
  & -h_0(y) \,(v^\mtd_\mh\cdot \nabla_\mh v^\mtd)(t,X_{v,\mh})- \rho_0(y)\, \overline{w_\mh}(t,y)\,\cdot (\nabla_\mh v^\mtd)\left( t,X_{v,\mh}\right) \\
  := & \,\bar F^1(t,y) + \bar F^2(t,y) + \bar F^3(t,y) + \bar F^4(t,y).
\end{split}
\end{equation}

\smallskip
\section{Proof of Theorem \ref{thm:regu}: global stability result with regular density}\label{sec:thm1}

\subsection{A priori estimates}

In this subsection we construct the a priori estimates used in proving Theorem \ref{thm:regu}.

\subsubsection{A priori estimates for 2D (HNS) on $\R^2$}

For the solution of 2D (HNS) \eqref{HNS} with $L_2$-initial data, we have the following a priori results.
\begin{lemma}\label{lem:2DNS-L2}
  Let $v^\mtd_0= (v^\mtd_{1,0}, v^\mtd_{2,0}, v^\mtd_{3,0})\in L_2(\R^2)$, then there exists a unique strong solution $v^\mtd$ to the three-component 2D (HNS) system \eqref{HNS} on $(0,\infty)$ which is also smoothly regular for all $t>0$.
Moreover, the statements as follows hold true.
\begin{enumerate}[(1)]
\item
$v^\mtd$ satisfies the $L_2$-energy estimate
\begin{equation}\label{L2es2D}
  \|v^\mtd(t)\|_{L_2(\R^2)}^2 + 2 \|\nabla_\mh v^\mtd\|_{L_2(\R^2\times(0,t))}^2\leq \|v^\mtd_0\|_{L_2(\R^2)}^2,\quad \forall t\geq 0.
\end{equation}
\item
There is a generic constant $C>0$ such that
\begin{equation}\label{L2Linf2D}
\begin{split}
  \|v^\mtd\|_{L_2(\R^+; L_\infty(\R^2))}^2 
  \leq C \|v^\mtd_0\|_{L_2(\R^2)}^2 \left( 1 + \|v^\mtd_0\|_{L_2(\R^2)}^2 \log^2(e + \|v^\mtd_0\|_{L_2(\R^2)})\right).
\end{split}
\end{equation}
\item
$v^\mtd$ also satisfies the following energy type estimates that for every $t\geq 0$,
\begin{equation}\label{L2es2d-2}
  t \|\nabla_\mh v^\mtd(t)\|_{L_2(\R^2)}^2 + \int_0^t \tau  \big( \|\partial_\tau v^\mtd\|_{L_2(\R^2)}^2 + \|\nabla_\mh^2 v^\mtd\|_{L_2(\R^2)}^2 \big)\dd \tau
  \leq  C \|v^\mtd_0\|_{L_2(\R^2)}^2 e^{C \|v^\mtd_0\|_{L_2}^4},
\end{equation}
and
\begin{equation}\label{L2es2d-3}
  t^2 \|\partial_t v^\mtd(t)\|_{L_2(\R^2)}^2 + t^2 \|\nabla_\mh^2 v^\mtd\|_{L_2(\R^2)}^2 +\int_0^t \tau^2  \|\nabla_\mh\partial_\tau v^\mtd\|_{L_2}^2 \dd \tau
  \leq  C \|v^\mtd_0\|_{L_2(\R^2)}^2 e^{C (1+\|v^\mtd_0\|_{L_2}^4)},
\end{equation}
and
\begin{equation}\label{L2es2d-4}
  t^3 \|\nabla_\mh\partial_t v^\mtd(t)\|_{L_2(\R^2)}^2 +  t^4 \|\nabla_\mh^2\partial_t v^\mtd(t)\|_{L_2(\R^2)}^2
  \leq  C \|v^\mtd_0\|_{L_2(\R^2)}^2 e^{C(1+ \|v^\mtd_0\|_{L_2}^4)}.
\end{equation}
\item
There is an absolute constant $C>0$ such that for every $t>0$,
\begin{align}
  &\|v^\mtd(t)\|_{L_p(\R^2)}\leq C \|v^\mtd_0\|_{L_2(\R^2)} e^{C (1+\|v^\mtd_0\|_{L_2}^4)}\, t^{\frac{1}{p}- \frac{1}{2}},\qquad\;\;\; p\in [2,\infty], \label{decHNS2d1}\\
  & \|\nabla_\mh v^\mtd(t)\|_{L_p(\R^2)} \leq C \|v^\mtd_0\|_{L_2(\R^2)} e^{C (1+\|v^\mtd_0\|_{L_2}^4)}\, t^{\frac{1}{p}-1} ,\qquad  p\in [2,\infty],\label{decHNS2d3} \\
  & \|\partial_t v^\mtd(t)\|_{L_\infty(\R^2)} \leq C \|v^\mtd_0\|_{L_2(\R^2)} e^{C(1+\|v^\mtd_0\|_{L_2}^4)}\, t^{-\frac{3}{2}}.\label{decHNS2d4}
\end{align}
\end{enumerate}
\end{lemma}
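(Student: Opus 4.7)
The plan is to combine classical 2D Navier--Stokes theory with weighted-in-time energy methods, exploiting the fact that $v^\mtd_3$ is passively advected and diffused by the genuine 2D flow $v^\mtd_\mh$.

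First I would observe that $(v^\mtd_1,v^\mtd_2,p^\mtd)$ solves the scalar-pressure 2D (HNS) system on $\R^2$, since both the divergence constraint and $\nabla p^\mtd$ involve only horizontal components; this system is globally well-posed in $L_2(\R^2)$ with instantaneous smoothing by Leray--Ladyzhenskaya theory. The third component $v^\mtd_3$ then satisfies the linear transport-diffusion equation $\partial_t v^\mtd_3+v^\mtd_\mh\cdot\nabla_\mh v^\mtd_3=\Delta_\mh v^\mtd_3$, inheriting the regularity automatically. Testing \eqref{HNS} with $v^\mtd$ in $L_2$, the divergence-free condition $\nabla_\mh\cdot v^\mtd_\mh=0$ annihilates both the trilinear convective term and the pressure term, leaving the standard identity that integrates to \eqref{L2es2D}.

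For the weighted estimates of (3), test \eqref{HNS} with $-t\Delta_\mh v^\mtd$ and control the resulting trilinear term by Ladyzhenskaya's 2D inequality $\|f\|_{L_4(\R^2)}^2\leq C\|f\|_{L_2}\|\nabla f\|_{L_2}$; after Young's inequality absorbs half of $t\|\nabla_\mh^2 v^\mtd\|_{L_2}^2$ one obtains
\[
\tfrac{d}{dt}\bigl(t\|\nabla_\mh v^\mtd\|_{L_2}^2\bigr)+t\|\nabla_\mh^2 v^\mtd\|_{L_2}^2\leq \|\nabla_\mh v^\mtd\|_{L_2}^2+C\|v^\mtd\|_{L_2}^2\|\nabla_\mh v^\mtd\|_{L_2}^2\cdot\bigl(t\|\nabla_\mh v^\mtd\|_{L_2}^2\bigr).
\]
Since $\int_0^\infty\|v^\mtd\|_{L_2}^2\|\nabla_\mh v^\mtd\|_{L_2}^2\,\dd t\leq \|v^\mtd_0\|_{L_2}^4/2$ by \eqref{L2es2D}, Gronwall applied to $y(t):=t\|\nabla_\mh v^\mtd\|_{L_2}^2$ produces the exponential factor in \eqref{L2es2d-2}; the bound on $t\|\partial_t v^\mtd\|_{L_2}^2$ is then read off directly from the equation. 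For \eqref{L2es2d-3} and \eqref{L2es2d-4} I would differentiate \eqref{HNS} in time, treat $\partial_t v^\mtd$ as a new unknown satisfying a linear equation driven by $v^\mtd$, and bootstrap the weighted energy method with higher time weights $t^2$, $t^3$, $t^4$ using the previously obtained estimate as Gronwall input.

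For item (2), I would apply a Brezis--Gallouet type inequality in 2D of the form $\|u\|_{L_\infty(\R^2)}^2\leq C\|u\|_{H^1}^2\log\bigl(e+\|u\|_{H^2}^2/\|u\|_{H^1}^2\bigr)$ to $v^\mtd(t)$ and integrate in time; Jensen's inequality applied to the concavity of $\log$ pushes the time integral inside, reducing the problem to a logarithm of the (finite) time integrals of $\|v^\mtd\|_{H^1}^2$ and $\|v^\mtd\|_{H^2}^2$ controlled by \eqref{L2es2D} and \eqref{L2es2d-3}, and tracking constants yields the $\log^2(e+\|v^\mtd_0\|_{L_2})$ factor in \eqref{L2Linf2D}. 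I expect this to be the most delicate step, as the heat-kernel map $L_2(\R^2)\to L_\infty(\R^2)$ has operator norm of order $t^{-1/2}$, which is critically not square-integrable in time, so the logarithmic gain from Brezis--Gallouet is essential to make the time integral converge. Finally, the decay estimates (4) follow by interpolation from the weighted bounds: Gagliardo--Nirenberg $\|v^\mtd\|_{L_p(\R^2)}\leq C\|v^\mtd\|_{L_2}^{2/p}\|\nabla_\mh v^\mtd\|_{L_2}^{1-2/p}$ for $p\in[2,\infty)$, combined with $\|\nabla_\mh v^\mtd(t)\|_{L_2}\leq Ct^{-1/2}\|v^\mtd_0\|_{L_2}e^{C\|v^\mtd_0\|_{L_2}^4/2}$ from \eqref{L2es2d-2}, yields \eqref{decHNS2d1}; the endpoint $p=\infty$ uses the 2D Agmon inequality $\|v\|_{L_\infty}^2\leq C\|v\|_{L_2}\|\nabla_\mh^2 v\|_{L_2}$ together with \eqref{L2es2d-3}, and \eqref{decHNS2d3}--\eqref{decHNS2d4} follow analogously one derivative higher via \eqref{L2es2d-3}--\eqref{L2es2d-4}.
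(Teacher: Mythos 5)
Your treatment of parts (1), (3) and (4) is essentially the paper's: you decouple $(v^\mtd_\mh,p^\mtd)$ from the passively advected third component $v^\mtd_3$, run weighted-in-time energy estimates, and obtain the decay rates by the same Gagliardo--Nirenberg/Agmon interpolations. The only cosmetic difference in (3) is that the paper tests with $t\,\partial_t v^\mtd$ (and then recovers $\nabla_\mh^2 v^\mtd$ from the Stokes estimate), while you test with $-t\Delta_\mh v^\mtd$ and recover $\partial_t v^\mtd$ from the equation (after applying the Leray projector to dispose of the pressure); these are equivalent bookkeepings of the same weighted-energy idea.

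The genuine gap is part (2). The paper does not prove \eqref{L2Linf2D} at all; it quotes it as \cite[Theorem 3]{CG06}, and your proposed substitute proof fails as written. The Brezis--Gallouet-plus-Jensen step needs the finiteness of $\int_0^\infty\|v^\mtd(t)\|_{H^1}^2\,\dd t$ and $\int_0^\infty\|v^\mtd(t)\|_{H^2}^2\,\dd t$, and neither is controlled by \eqref{L2es2D} or \eqref{L2es2d-3}: the energy inequality bounds $\sup_t\|v^\mtd(t)\|_{L_2}$ and $\int_0^\infty\|\nabla_\mh v^\mtd\|_{L_2}^2\,\dd t$, but for data merely in $L_2(\R^2)$ the $L_2$ norm has no quantitative decay, so $\int_0^\infty\|v^\mtd(t)\|_{L_2}^2\,\dd t$ may diverge; and $\|\nabla_\mh^2 v^\mtd(t)\|_{L_2}$ behaves like $t^{-1}$ as $t\to 0^+$ (this is precisely why the weights $\tau$ and $\tau^2$ appear in \eqref{L2es2d-2}--\eqref{L2es2d-3}), so $\int_0^\infty\|v^\mtd(t)\|_{H^2}^2\,\dd t$ is not finite near $t=0$ and is not controlled at infinity without the weight. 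Moreover, even a repaired version of this scheme would have to feed \eqref{L2es2d-2}--\eqref{L2es2d-3} into the logarithm and would therefore inherit the factor $e^{C\|v^\mtd_0\|_{L_2}^4}$, whereas the right-hand side of \eqref{L2Linf2D} is only polynomial in $\|v^\mtd_0\|_{L_2}$ up to a $\log^2$; the Chemin--Gallagher bound is obtained by a high/low frequency splitting of the Leray solution, not by integrating a pointwise-in-time logarithmic inequality. For (2) you should either cite \cite[Theorem 3]{CG06}, as the paper does, or reproduce that argument.
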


\begin{remark}
  Up to an exponential function depending on $\|v^\mtd_0\|_{L_2(\R^2)}$,
the decay estimates \eqref{decHNS2d1}-\eqref{decHNS2d4} are the same with those for $e^{t\Delta_\mh} v^\mtd_0$, which is solution to the free heat equation.
We also note that these estimates \eqref{decHNS2d1}-\eqref{decHNS2d4} at $p=\infty$ case remove the additional logarithmic function $\sqrt{\log(1+t)}$
on the right-hand side of \cite[(1.3)-(1.9)]{KozO}.
\end{remark}

\begin{proof}[Proof of Lemma \ref{lem:2DNS-L2}]

For the existence, uniqueness and smoothness on $(0,\infty)$ issues of strong solution $v^\mtd$, the proof is classical and one can see \cite[Theorem]{KozO} for the details (noting that the treating of the three-component vector field $v^\mtd$ follows almost the same lines
with that of $v^\mtd_\mh$).
The energy estimate \eqref{L2es2D} for $v^\mtd$ can be deduced in a standard way, while the estimate \eqref{L2Linf2D} is just the same result as \cite[Theorem 3]{CG06}.
By imposing some suitable time weights in obtaining energy type estimates (for the original ideas see the past works \cite{Hof,PZZ} and references therein),
we can prove \eqref{L2es2d-2}-\eqref{L2es2d-4} in an elementary approach, and we place the proof in the appendix section.

The decay estimates \eqref{decHNS2d1}-\eqref{decHNS2d4} for $v^\mtd$ are immediately followed from \eqref{L2es2d-2}-\eqref{L2es2d-4}
and the interpolation inequalities $\|g\|_{L_p(\R^2)} \leq \|g\|_{L_2(\R^2)}^{2/p} \|\nabla_\mh g\|_{L_2(\R^2)}^{1-2/p}$ for $p\in [2,\infty[$
and $\|g\|_{L_\infty(\R^2)} \leq \|g\|_{L_2(\R^2)}^{1/2} \|\nabla_\mh^2 g\|_{L_2(\R^2)}^{1/2}$.

\end{proof}

If the initial data $v^\mtd_0$ is more regular, we also have the following refined a priori estimates.
\begin{lemma}\label{lem:2DNS-2}
  Let $v^\mtd_0\in L_2\cap \dot B^{3-2/p}_{p,p}(\R^2)$ with $p>3$, then the unique global smooth solution $v^\mtd=(v^\mtd_\mh, v^\mtd_3)$ of the 2D (HNS) system satisfies that
\begin{equation}\label{v2d-es1}
\begin{split}
  \sup_{t\geq 0}\|v^\mtd \|_{\dot B^{3-2/p}_{p,p}} + \|\partial_t v^\mtd, \nabla^2_\mh v^\mtd\|_{L_p(\R^+; W^1_p)}
  \leq\,   C \Big( \|v^\mtd_0\|_{L_2\cap \dot B^{3-2/p}_{p,p}}^3 + 1\Big)\|v^\mtd_0\|_{L_2\cap \dot B^{3-2/p}_{p,p}}e^{C(1+\|v^\mtd_0\|_{L_2}^4)}.
\end{split}
\end{equation}
\end{lemma}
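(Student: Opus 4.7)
The plan is to view the 2D (HNS) system as a Stokes system driven by the convective nonlinearity $f:=-v^\mtd_\mh\cdot\nabla_\mh v^\mtd$, and to apply a gradient-shifted version of the maximal regularity estimate of Lemma \ref{lem:StokesR}. Differentiating the Stokes system once in space (or, equivalently, repeating the proof of Lemma \ref{lem:StokesR} on $\nabla_\mh v^\mtd$, which satisfies a Stokes problem with a pressure-adjusted right-hand side), one obtains
\begin{equation*}
  \sup_{0\le s\le t}\|v^\mtd(s)\|_{\dot B^{3-2/p}_{p,p}} + \|\partial_s v^\mtd,\nabla_\mh^2 v^\mtd\|_{L_p((0,t);W^1_p)}
  \le C\bigl(\|v^\mtd_0\|_{\dot B^{3-2/p}_{p,p}} + \|v^\mtd_\mh\cdot\nabla_\mh v^\mtd\|_{L_p((0,t);W^1_p)}\bigr).
\end{equation*}
The whole task then reduces to controlling the nonlinearity on the right-hand side in terms of the lower-order bounds furnished by Lemma \ref{lem:2DNS-L2}.

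For the nonlinearity I would use, since $p>3>2$ gives $W^1_p(\R^2)\hookrightarrow L_\infty(\R^2)$, the Leibniz-type bound
\begin{equation*}
  \|v^\mtd_\mh\cdot\nabla_\mh v^\mtd\|_{W^1_p}
  \le C\,\|v^\mtd\|_{L_\infty}\|\nabla_\mh v^\mtd\|_{W^1_p} + C\,\|\nabla_\mh v^\mtd\|_{L_\infty}\|\nabla_\mh v^\mtd\|_{L_p},
\end{equation*}
so that
\begin{equation*}
  \|v^\mtd_\mh\cdot\nabla_\mh v^\mtd\|_{L_p((0,t);W^1_p)}
  \le C\,\|v^\mtd\|_{L_{2p}((0,t);L_\infty)}\|\nabla_\mh v^\mtd\|_{L_{2p}((0,t);W^1_p)}
  + C\,\|\nabla_\mh v^\mtd\|_{L_{2p}((0,t);L_\infty)}\|\nabla_\mh v^\mtd\|_{L_{2p}((0,t);L_p)}.
\end{equation*}
The factors in $L_{2p}$ in time of lower-regularity quantities are where Lemma \ref{lem:2DNS-L2} is used: the decay estimates \eqref{decHNS2d1}--\eqref{decHNS2d3} show, via $\|v^\mtd(t)\|_{L_\infty}\lesssim t^{-1/2}$ and $\|\nabla_\mh v^\mtd(t)\|_{L_p}\lesssim t^{1/p-1}$ valid near $t=\infty$, combined with the $L_2$-energy bound \eqref{L2es2D} and the $L_2(L_\infty)$ estimate \eqref{L2Linf2D} near $t=0$, that all the $L_{2p}$-in-time, $L^\infty$-in-space norms appearing above are finite and bounded by $C\|v^\mtd_0\|_{L_2}^\alpha e^{C(1+\|v^\mtd_0\|_{L_2}^4)}$ for suitable $\alpha$. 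Essentially, combining a sub-interval $[0,\delta]$ in which time-smallness absorbs constants via maximal regularity on a short time, with the interval $[\delta,\infty)$ on which \eqref{L2es2d-2}--\eqref{L2es2d-4} give integrable decay, one gets the needed $L_{2p}$-in-time bounds without invoking the top-order norm on the right-hand side.

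Once the nonlinearity is absorbed on the left-hand side for short time (by choosing the splitting parameter $\delta$ appropriately and using the continuity of the maximal regularity norm), the remaining terms yield a polynomial-in-$\|v^\mtd_0\|_{L_2\cap\dot B^{3-2/p}_{p,p}}$ bound multiplied by the exponential factor inherited from the $L_2$-level estimates of Lemma \ref{lem:2DNS-L2}. The cubic-plus-linear dependence $(\|v^\mtd_0\|^3+1)\|v^\mtd_0\|$ arises naturally from the product structure of the nonlinearity combined with one application of Gronwall on the interpolated low-regularity norms. The main obstacle, as usual at this regularity level, is to obtain a closed estimate without introducing the top norm on the right-hand side; this is resolved by using the decay estimates of Lemma \ref{lem:2DNS-L2} to put one factor of $v^\mtd$ (or $\nabla_\mh v^\mtd$) in a space-time norm that is \emph{strictly weaker} than the maximal regularity norm, and then distributing the remaining derivatives via Sobolev embedding in 2D.
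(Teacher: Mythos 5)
Your overall frame (the maximal regularity of Lemma \ref{lem:StokesR} applied to the once-differentiated Stokes problem, Leibniz and H\"older bounds on $v^\mtd_\mh\cdot\nabla_\mh v^\mtd$ in $L_p(\R^+;W^1_p)$, interpolation so that only a half power of the top norm appears and is absorbed by Young) is essentially the second half of the paper's argument. The genuine gap is in how you control the low-order factors near $t=0$. The quantities you need, $\|v^\mtd\|_{L_{2p}(L_\infty)}$, $\|\nabla_\mh v^\mtd\|_{L_{2p}(L_\infty)}$, $\|\nabla_\mh v^\mtd\|_{L_{2p}(L_p)}$ (and, after interpolating $\nabla^2_\mh v^\mtd$, also $\|\nabla_\mh v^\mtd\|_{L_\infty(\R^+;L_p)}$), are \emph{not} controlled near $t=0$ by Lemma \ref{lem:2DNS-L2}: the decay bounds \eqref{decHNS2d1}--\eqref{decHNS2d3} behave like $t^{-1/2}$, $t^{1/p-1}$ and $t^{-1}$, none of which is $2p$-integrable (or bounded) at $t=0$ when $p>3$, while \eqref{L2es2D} and \eqref{L2Linf2D} only give $L_\infty(L_2)$ and $L_2(L_\infty)$, which have neither the right time integrability nor any control of $\nabla_\mh v^\mtd$ in $L_p$ or $L_\infty$. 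So your claim that the $L_2$-level estimates suffice ``near $t=0$'' does not hold, and the fallback you sketch (a short interval $[0,\delta]$ with smallness-in-time absorption, then continuation) would have to be iterated over time intervals whose number and constants depend on the solution itself; without an additional globally valid a priori bound it does not yield the stated constant $C\big(\|v^\mtd_0\|_{L_2\cap\dot B^{3-2/p}_{p,p}}^3+1\big)\|v^\mtd_0\|_{L_2\cap\dot B^{3-2/p}_{p,p}}e^{C(1+\|v^\mtd_0\|_{L_2}^4)}$.

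What the paper inserts at exactly this point, and what your sketch is missing, is a uniform-in-time bound on $\|\nabla_\mh v^\mtd(t)\|_{L_p(\R^2)}$ obtained not from maximal regularity but from the structure of the system: the 2D vorticity $\omega^\mtd_\mh=\partial_1 v^\mtd_2-\partial_2 v^\mtd_1$ satisfies a transport--diffusion equation, hence $\|\omega^\mtd_\mh(t)\|_{L_p}\le\|\omega^\mtd_{\mh,0}\|_{L_p}$ and Calder\'on--Zygmund gives \eqref{bomgLpes2}; the third component $v^\mtd_3$, which the vorticity does not see, is treated separately via the maximum principle and an $L_p$-energy estimate for $\nabla_\mh v^\mtd_3$, using $\int_0^\infty\|\nabla_\mh v^\mtd_\mh\|_{L_p}^2\,\dd t<\infty$ (see \eqref{nab-v3Lp} and \eqref{fact5}). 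With this $L_\infty(\R^+;L_p)$ control in hand, each nonlinear term is a product of a factor that is either integrated only on $[1,\infty)$ (where the decay estimates apply) or bounded on $[0,1]$ through the lower-order maximal regularity bound \eqref{estma0}, times the uniform $L_p$ gradient bound, plus at most a half power of the top norm; the estimate then closes globally in one stroke, with no time-splitting iteration and no Gronwall in the top norm. To repair your argument you should either prove such a uniform gradient bound (your sketch never does, and note that the three-component field forces the separate treatment of $v^\mtd_3$) or supply some other globally quantified ingredient; as written, the small-time part of your product estimates is unsupported.
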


\begin{proof}[Proof of Lemma \ref{lem:2DNS-2}]
We first consider the estimate of $\|\nabla_\mh v^\mtd(t)\|_{L_p(\R^2)}$. Noticing that the vorticity $\omega^\mtd_\mh :=\partial_1 v^\mtd_2 - \partial_2 v^\mtd_1$ satisfies the transport equation
\begin{equation}\label{omg2dEq}
  \partial_t \omega^\mtd_\mh + v^\mtd_\mh \cdot\nabla_\mh \omega^\mtd_\mh -\Delta_\mh \omega^\mtd_\mh =0,
\end{equation}
we multiply the above equation with $|\omega^\mtd_\mh|^{p-2}\omega^\mtd_\mh$ and integrate on the spatial variables to get
\begin{equation*}
  \frac{1}{p}\frac{\dd}{\dd t}\|\omega^\mtd_\mh \|_{L^p}^p + (p-1)\int_{\R^2} |\nabla \omega^\mtd_\mh |^2 |\omega^\mtd_\mh |^{p-2}\dd x_\mh =0,
\end{equation*}
which combined with the continuous embedding $L_2\cap\dot B^{3-2/p}_{p,p}(\R^2)\hookrightarrow \dot W^1_p(\R^2)$ leads to
\begin{equation}\label{bomgLpes}
  \|\omega^\mtd_\mh (t)\|_{L_p(\R^2)}\leq \|\omega^\mtd_0 \|_{L_p(\R^2)}\leq C \|v^\mtd_{\mh,0}\|_{L_2\cap \dot B^{3-2/p}_{p,p}(\R^2)}.
\end{equation}
By the Calder\'on-Zygmund theorem, we immediately obtain
\begin{equation}\label{bomgLpes2}
  \|\nabla_\mh v^\mtd_\mh(t)\|_{L_p(\R^2)}\leq C\|v^\mtd_{\mh,0}\|_{L_2\cap \dot B^{3-2/p}_{p,p}(\R^2)}.
\end{equation}
Now we turn to the estimate of $\|\nabla_\mh v^\mtd_3(t)\|_{L_p}$. From the maximum principle and $L_r$-estimate of the transport-diffusion equation (similar to \eqref{bomgLpes}), we get that for every $r\in [2,\infty]$,
\begin{equation}
  \|v^\mtd_3(t)\|_{L_r(\R^2)} \leq \|v^\mtd_{3,0}\|_{L_r(\R^2)},\quad \forall t\geq 0.
\end{equation}
Observe that
\begin{equation}\label{nab-v3Eq}
  \partial_t (\nabla_\mh v^\mtd_3) + v^\mtd_\mh \cdot\nabla_\mh (\nabla_\mh v^\mtd_3) - \Delta_\mh (\nabla_\mh v^\mtd_3) = - (\nabla_\mh v^\mtd_\mh) \cdot \nabla_\mh v^\mtd_3.
\end{equation}
By taking the scalar product of the above equation with $|\nabla_\mh v^\mtd_3|^{p-2} \nabla_\mh v^\mtd_3$, and using the divergence-free property of $v^\mtd_\mh$ and the integration by parts, we obtain
\begin{equation*}
\begin{split}
  \frac{1}{p} \frac{\dd }{\dd t} \|\nabla_\mh v^\mtd_3(t)\|_{L_p}^p & + (p-1) \int_{\R^2} |\nabla_\mh^2 v^\mtd_3|^2 |\nabla_\mh v^\mtd_3|^{p-2} \dd x_\mh
  \leq  \left|\int_{\R^2} |\nabla_\mh v^\mtd_3|^{p-2}\nabla_\mh v^\mtd_3 \cdot \nabla_\mh v^\mtd_\mh \cdot \nabla_\mh v^\mtd_3  \dd x\right|  \\
  & \leq (p-1) \int_{\R^2} |\nabla_\mh v^\mtd_3|^{p-2} |\nabla^2_\mh v^\mtd_3| |\nabla_\mh v^\mtd_\mh| |v^\mtd_3|(t,x_\mh) \dd x_\mh.
\end{split}
\end{equation*}
The Young inequality and H\"older's inequality lead to
\begin{equation*}
\begin{split}
  \frac{1}{p} \frac{\dd }{\dd t} \|\nabla_\mh v^\mtd_3\|_{L_p}^p + \frac{p-1}{2} \int_{\R^2} |\nabla_\mh^2 v^\mtd_3|^2|\nabla_\mh v^\mtd_3|^{p-2} & \dd x_\mh
  \leq C_0(p-1) \int_{\R^2} |\nabla_\mh v^\mtd_3|^{p-2} |\nabla_\mh v^\mtd_\mh|^2 |v^\mtd_3|^2 \dd x_\mh \\
  & \leq C_0 (p-1) \|\nabla_\mh v^\mtd_3\|_{L_p}^{p-2} \|\nabla_\mh v^\mtd_\mh\|_{L_p}^2 \|v^\mtd_{3,0}\|_{L_\infty}^2,
\end{split}
\end{equation*}
which implies
\begin{equation*}
  \frac{1}{2} \frac{\dd }{\dd t} \|\nabla_\mh v^\mtd_3\|_{L_p}^2 \leq C_0 (p-1) \|v^\mtd_{3,0}\|_{L_\infty}^2 \|\nabla_\mh v^\mtd_\mh(t)\|_{L_p}^2.
\end{equation*}
By using the following estimate (deduced from \eqref{decHNS2d3} and \eqref{bomgLpes})
\begin{equation}\label{fact5}
\begin{split}
  \int_0^\infty \|\nabla_\mh v^\mtd_\mh(t)\|_{L_p}^2 \dd t & \leq \int_0^1 \|\omega^\mtd_\mh(t)\|_{L_p}^2 \dd t + C \|v^\mtd_0\|_{L_2}^2 e^{C(1+\|v^\mtd_0\|_{L_2}^4)} \int_1^\infty t^{-(2-\frac{2}{p})}\dd t \\
  & \leq \|\omega^\mtd_{\mh,0}\|_{L_p}^2 + C \|v^\mtd_0\|_{L_2}^2 e^{C(1+\|v^\mtd_0\|_{L_2}^4)} \leq C \|v^\mtd_0\|_{L_2\cap \dot B^{3-2/p}_{p,p}(\R^2)}^2 e^{C(1+\|v^\mtd_0\|_{L_2}^4)} ,
\end{split}
\end{equation}
we integrate in the time variable to find
\begin{equation}\label{nab-v3Lp}
\begin{split}
  \|\nabla_\mh v^\mtd_3(t)\|_{L_p}^2 \leq &   \|\nabla_\mh v^\mtd_{3,0}\|_{L_p}^2  + 2 C_0(p-1) \|v^\mtd_{3,0}\|_{L_\infty}^2 \int_0^\infty \|\nabla_\mh v^\mtd_\mh(t)\|_{L_p}^2 \dd t \\
  \leq & C \Big(\|v^\mtd_0\|_{L_2\cap \dot B^{3-2/p}_{p,p}(\R^2)}^2 +1\Big)\|v^\mtd_0\|_{L_2\cap \dot B^{3-2/p}_{p,p}(\R^2)}^2 e^{C(1+\|v^\mtd_0\|_{L_2}^4)}.
\end{split}
\end{equation}

Next we consider the estimate of $\|v^\mtd(t)\|_{\dot B^{2-2/p}_{p,p}(\R^2)}$. From the 2D (HNS) system \eqref{HNS}, by using Lemma \ref{lem:StokesR}, we have
\begin{equation*}
\begin{split}
  \sup_{t>0}\|v^\mtd(t)\|_{\dot B^{2-2/p}_{p,p}(\R^2)} + \|\partial_t v^\mtd,\nabla^2 v^\mtd\|_{L_p(\R^2\times\R^+)}
  \leq \, C \left( \|v^\mtd_\mh \cdot\nabla v^\mtd\|_{L_p(\R^2 \times\R^+)} + \|v^\mtd_0\|_{\dot B^{2-2/p}_{p,p}(\R^2)}\right).
\end{split}
\end{equation*}
From \eqref{decHNS2d1} and \eqref{bomgLpes2}, we deduce that for every $p>3$,
\begin{equation}\label{fact3}
\begin{split}
  \|v^\mtd_\mh\|_{L_p(\R^+; L_\infty(\R^2))}
  \leq & \|v^\mtd_\mh\|_{L_p(0,1; L_\infty(\R^2))} + \|v^\mtd_\mh\|_{L_p(1,\infty; L_\infty(\R^2))}\\
  \leq &  \|v^\mtd_\mh\|_{L_\infty(0,1; L_2\cap \dot W^1_p)} +  C\|v^\mtd_0\|_{L_2}e^{C(1+\|v^\mtd_0\|_{L_2}^4)}\left(\int_1^\infty t^{-\frac{p}{2}} \dd t\right)^{1/p} \\
  \leq & C \|v^\mtd_{\mh,0}\|_{L_2\cap \dot B^{3-2/p}_{p,p}} e^{C(1+\|v^\mtd_0\|_{L_2}^4)},
\end{split}
\end{equation}
and
\begin{equation}\label{eq:est3}
\begin{split}
  \|v^\mtd_\mh\cdot\nabla_\mh v^\mtd\|_{L_p(\R^2\times \R^+)} & \leq \|v^\mtd_\mh\|_{L_p(\R^+; L_\infty(\R^2))} \|\nabla_\mh v^\mtd \|_{L_\infty(\R^+; L_p(\R^2))} \\
  & \leq C \left(\|v^\mtd_0\|_{L_2\cap \dot B^{3-2/p}_{p,p}(\R^2)}^2 +1\right)\|v^\mtd_0\|_{L_2\cap \dot B^{3-2/p}_{p,p}(\R^2)}e^{C(1+\|v^\mtd_0\|_{L_2}^4)}.
\end{split}
\end{equation}
Gathering the above estimates leads to
\begin{equation}\label{estma0}
\begin{split}
  \sup_{t\geq 0}\|v^\mtd\|_{\dot B^{2-2/p}_{p,p}} + \|\partial_t v^\mtd,\nabla_\mh^2 v^\mtd\|_{L_p(\R^2\times\R^+)}
  \leq C \left( \|v^\mtd_0\|_{L_2\cap \dot B^{3-2/p}_{p,p}}^2  + 1\right) \|v^\mtd_0\|_{L_2\cap \dot B^{3-2/p}_{p,p}}e^{C(1+\|v^\mtd_0\|_{L_2}^4)}.
\end{split}
\end{equation}
In particular, since a better estimate \eqref{bomgLpes2} holds and we can use it in \eqref{eq:est3}, we also infer that
\begin{equation}\label{estma0.2}
\begin{split}
  \sup_{t\geq 0}\|v^\mtd_\mh\|_{\dot B^{2-2/p}_{p,p}} + \|\partial_t v^\mtd_\mh,\nabla_\mh^2 v^\mtd_\mh\|_{L_p(\R^2\times\R^+)}
  \leq C \left( \|v^\mtd_0\|_{L_2\cap \dot B^{3-2/p}_{p,p}}  + 1\right) \|v^\mtd_{\mh,0}\|_{L_2\cap \dot B^{3-2/p}_{p,p}}e^{C(1+\|v^\mtd_0\|_{L_2}^4)}.
\end{split}
\end{equation}

Finally, we intend to estimate $\|v^\mtd(t)\|_{\dot B^{3-2/p}_{p,p}(\R^2)}$. Noting that
\begin{equation*}
  \partial_t (\nabla_\mh v^\mtd) - \Delta_\mh (\nabla_\mh v^\mtd) + \nabla (\nabla_\mh p^\mtd)
   = - v^\mtd_\mh \cdot\nabla_\mh (\nabla_\mh v^\mtd)- (\nabla_\mh v^\mtd_\mh) \cdot \nabla_\mh v^\mtd,
\end{equation*}
and by using Lemma \ref{lem:StokesR} again, we get
\begin{equation*}
\begin{split}
  & \sup_{t\geq 0}\|\nabla_\mh v^\mtd \|_{\dot B^{2-2/p}_{p,p}(\R^2)} + \|(\nabla_\mh\partial_t  v^\mtd , \nabla_\mh^3 v^\mtd )\|_{L_p(\R^2\times\R^+)} \\
  \leq &  C \left( \| v^\mtd_\mh\cdot\nabla_\mh^2 v^\mtd \|_{L_p( \R^2\times\R^+)}  + \|(\nabla_\mh v^\mtd_\mh)\cdot\nabla_\mh v^\mtd \|_{L_p( \R^2\times\R^+)}
  + \|v^\mtd_0 \|_{\dot B^{2-2/p}_{p,p}(\R^2)} \right).
\end{split}
\end{equation*}
By virtue of H\"older's inequality, the interpolation inequality, Cauchy's inequality and \eqref{fact3}, we deduce
\begin{equation*}
\begin{split}
  & \left(\int_0^\infty\|v^\mtd_\mh\cdot\nabla_\mh^2 v^\mtd(t) \|_{L_p}^p \dd t\right)^{1/p}
  \leq \left( \int_0^\infty \|v^\mtd_\mh\|_{L_\infty}^p \|\nabla_\mh v^\mtd \|_{L_p}^{p/2} \|\nabla_\mh^3 v^\mtd \|_{L_p}^{p/2} \dd t\right)^{1/p} \\
  & \leq \frac{1}{4C} \|\nabla^3_\mh v^\mtd\|_{L_p(\R^2\times\R^+)}
  + C \|v^\mtd_\mh\|_{L_{2p}(\R^+; L_\infty)}^2 \|\nabla_\mh v^\mtd\|_{L_\infty(\R^+; L_p)} \\
  & \leq \frac{1}{4C} \|\nabla^3_\mh v^\mtd\|_{L_p(\R^2\times\R^+)} + C \Big(\|v^\mtd_0\|_{L_2\cap \dot B^{3-2/p}_{p,p}(\R^2)}^3 +1\Big)
  \|v^\mtd_0\|_{L_2\cap \dot B^{3-2/p}_{p,p}(\R^2)} e^{C(1+\|v^\mtd_0\|_{L_2}^4)} .
\end{split}
\end{equation*}
From the continuous embedding $L_2\cap \dot W^2_p(\R^2)\hookrightarrow W^1_\infty(\R^2)$ and \eqref{decHNS2d3}, \eqref{estma0.2}, we find
\begin{align}\label{fact4}
  \Big(\int_0^\infty \|\nabla_\mh v^\mtd_\mh\|_{L_\infty(\R^2)}^p\dd t\Big)^{1/p} \nonumber 
  & \leq \|\nabla_\mh v^\mtd_\mh\|_{L_p(0,1; L_\infty(\R^2))} + \Big(\int_1^\infty \|\nabla_\mh v^\mtd_\mh(t)\|_{L_\infty(\R^2)}^p\dd t\Big)^{1/p} \nonumber\\
  & \leq C \| v^\mtd_\mh\|_{L_p(0,1; L_2\cap \dot W^2_p(\R^2))} + C \|v^\mtd_0\|_{L_2} e^{C(1+\|v^\mtd_0\|_{L_2}^4)}
  \Big(\int_1^\infty t^{-p} \dd t\Big)^{1/p} \nonumber \\
  & \leq C \Big( \|v^\mtd_0\|_{L_2\cap \dot B^{3-2/p}_{p,p}(\R^2)}  + 1\Big)\|v^\mtd_0\|_{L_2\cap \dot B^{3-2/p}_{p,p}} e^{ C(1+\|v^\mtd_0\|_{L_2}^4)},
\end{align}
which yields
\begin{equation*}
\begin{split}
  \|\nabla_\mh v^\mtd_\mh\cdot \nabla_\mh v^\mtd\|_{L_p(\R^+; L_p(\R^2))} & \leq \|\nabla_\mh v^\mtd_\mh \|_{L_p(\R^+; L_\infty(\R^2))} \|\nabla_\mh v^\mtd\|_{L_\infty(\R^+; L_p(\R^2))} \\
  & \leq C  \Big( \|v^\mtd_0\|_{L_2\cap \dot B^{3-2/p}_{p,p}(\R^2)}^3  + 1\Big) \|v^\mtd_0\|_{L_2\cap \dot B^{3-2/p}_{p,p}(\R^2)} e^{C(1+\|v^\mtd_0\|_{L_2}^4)}.
\end{split}
\end{equation*}
Thus we have
\begin{equation}\label{v2d3-es2}
  \sup_{t\geq 0}\|\nabla_\mh v^\mtd \|_{\dot B^{2-2/p}_{p,p}} + \|\partial_t v^\mtd , \nabla_\mh^2 v^\mtd \|_{L_p(\R^+; \dot W^1_p)}
  \leq   C  \Big( \|v^\mtd_0\|_{L_2\cap \dot B^{3-2/p}_{p,p}}^3  + 1\Big)\|v^\mtd_0\|_{L_2\cap \dot B^{3-2/p}_{p,p}} e^{C(1+\|v^\mtd_0\|_{L_2}^4)}.
\end{equation}

Hence, by combining \eqref{v2d3-es2} with \eqref{estma0} we conclude the desired estimate \eqref{v2d-es1}.

\end{proof}

\subsubsection{A priori estimates for the perturbed system \eqref{pertS1}-\eqref{pertS2}}

First is the a priori estimate of $(h,w)$ with initial data $(h_0,w_0)\in (L_2\cap L_\infty)\times L_2$.
\begin{proposition}\label{prop:eneR2}
  Let $h_0=\rho_0-1\in L_2\cap L_\infty(\R^3)$, $v^\mtd_0=(v^\mtd_{\mh,0}, v^\mtd_{3,0})\in L_2\cap \dot B^{3-2/p}_{p,p}(\R^2)$ ($p>3$) and $w_0=v_0-v^\mtd_0\in L_2(\R^3)$,
and let $(h,w)$ be a sufficiently smooth solution to the system \eqref{pertS1}-\eqref{pertS2} over $\R^3\times [0,T]$. Then under the condition that $\|h_0\|_{L_\infty(\R^3)}\leq \frac{1}{2}$, there holds for all $t\in [0,T]$,
\begin{equation}\label{hLp}
  \|h(t)\|_{L^p(\R^3)}= \|h_0\|_{L^p(\R^3)} \quad\text{for each}\quad p\in[2,\infty],
\end{equation}
and
\begin{equation}\label{wL2es}
  \sup_{0\leq t <T} \| w(t)\|_{L_2(\R^3)} + \left(\int_0^T \|\nabla  w(\tau)\|_{L_2(\R^3)}^2\,\dd\tau \right)^{1/2}\leq C \|( w_0,h_0)\|_{L_2(\R^3)} e^{B(v^\mtd_0)},
\end{equation}
where
\begin{equation}\label{B0}
  B(v^\mtd_0):=C  \Big( \|v^\mtd_0\|_{L_2\cap \dot B^{3-2/p}_{p,p}(\R^2)}^4 + 1 \Big)e^{C(1+\|v^\mtd_0\|_{L_2}^4)}.
\end{equation}
In above, the statement still holds replacing time interval $[0,T]$ by $[0,\infty)$.
\end{proposition}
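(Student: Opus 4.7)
The density identity \eqref{hLp} is a textbook consequence of the transport structure: since $v$ is smooth and $\divg v=0$, testing $h_t+v\cdot\nabla h=0$ against $|h|^{p-2}h$ makes the convective term vanish, giving $\|h(t)\|_{L_p}=\|h_0\|_{L_p}$ for $p\in[2,\infty)$; the $p=\infty$ case follows from $h$ being constant along characteristics. In particular $\|h\|_{L_\infty}\le 1/2$, so $\rho=1+h$ is trapped in $[1/2,3/2]$ throughout.

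For \eqref{wL2es} I test the $w$-equation in \eqref{pertS1} against $w$ in $L_2(\R^3)$. The pressure and convective terms drop by $\divg w=\divg v=0$, and the two pieces $-hw_t$ and $-h(v\cdot\nabla w)$ of $F$ in \eqref{pertS2} are moved back to the left and combined with their ``$1$-counterparts'' into density-weighted quantities. Using $\rho_t+v\cdot\nabla\rho=0$ one obtains the identity
\begin{equation*}
  \tfrac12 \frac{d}{dt}\int_{\R^3}\rho|w|^2\,dx + \|\nabla w\|_{L_2}^2 \;=\; J_1+J_2+J_3,
\end{equation*}
with $J_1:=-\int h(v^\mtd)_t\cdot w$, $J_2:=-\int \rho\,(w_\mh\cdot\nabla_\mh v^\mtd)\cdot w$, $J_3:=-\int h\,(v^\mtd_\mh\cdot\nabla_\mh v^\mtd)\cdot w$. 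The density bound renders the left-hand energy comparable to $\|w\|_{L_2}^2$.

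To estimate the $J_i$'s I slice in $x_3$, setting $W(x_\mh):=\|w(x_\mh,\cdot)\|_{L_2(\R)}$ and $H(x_\mh):=\|h(x_\mh,\cdot)\|_{L_2(\R)}$, which satisfy $\|W\|_{L_2(\R^2)}=\|w\|_{L_2(\R^3)}$, $\|H\|_{L_2(\R^2)}=\|h\|_{L_2(\R^3)}$, and $\|\nabla_\mh W\|_{L_2(\R^2)}\le\|\nabla_\mh w\|_{L_2(\R^3)}$ by Minkowski. For $J_2$, Fubini together with the 2D Ladyzhenskaya inequality $\|W\|_{L_4(\R^2)}^2\le C\|W\|_{L_2}\|\nabla_\mh W\|_{L_2}$ and Young give
\begin{equation*}
  |J_2| \;\le\; C\|\nabla_\mh v^\mtd\|_{L_2(\R^2)}\|w\|_{L_2}\|\nabla w\|_{L_2} \;\le\; \tfrac14\|\nabla w\|_{L_2}^2 + C\|\nabla_\mh v^\mtd\|_{L_2(\R^2)}^2\|w\|_{L_2}^2,
\end{equation*}
while Cauchy--Schwarz in $x_3$ together with $\|h(t)\|_{L_2}=\|h_0\|_{L_2}$ produce
\begin{equation*}
  |J_1|+|J_3| \;\le\; A(t)\,\|h_0\|_{L_2}\,\|w\|_{L_2}, \qquad A(t):=\|(v^\mtd)_t\|_{L_\infty(\R^2)}+\|v^\mtd_\mh\|_{L_\infty(\R^2)}\|\nabla_\mh v^\mtd\|_{L_\infty(\R^2)}.
\end{equation*}

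Setting $z(t):=\|w(t)\|_{L_2}$ I obtain the scalar inequality $z'(t)\le C\|\nabla_\mh v^\mtd\|_{L_2(\R^2)}^2\,z(t)+C\,A(t)\,\|h_0\|_{L_2}$, which is closed by Gr\"onwall to give
\begin{equation*}
  z(T)+\Big(\int_0^T\|\nabla w\|_{L_2}^2\Big)^{1/2} \;\le\; C\big(\|w_0\|_{L_2}+\|h_0\|_{L_2}\,\|A\|_{L_1(0,T)}\big)\exp\big(C\,\|\nabla_\mh v^\mtd\|_{L_2(\R^+;L_2)}^2\big).
\end{equation*}
The Gr\"onwall exponent is controlled by $C\|v^\mtd_0\|_{L_2}^2$ via the basic 2D energy \eqref{L2es2D}, so the main obstacle is verifying that $\|A\|_{L_1(\R^+)}$ is finite with the polynomial-exponential structure of $B(v^\mtd_0)$ in \eqref{B0}. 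I handle this by splitting $\R^+=[0,1]\cup[1,\infty)$: on $[1,\infty)$ the decay estimates \eqref{decHNS2d1}, \eqref{decHNS2d3}, \eqref{decHNS2d4} give $A(t)\lesssim t^{-3/2}$, which is integrable; on $[0,1]$ integrability follows from the maximal regularity bound \eqref{v2d-es1} combined with the 2D Sobolev embedding $W^1_p(\R^2)\hookrightarrow L_\infty(\R^2)$ (valid for $p>2$) and H\"older in time. It is precisely at this step that Lemmas \ref{lem:2DNS-L2} and \ref{lem:2DNS-2} must be used together, and tracking their constants reproduces the exponential factor $B(v^\mtd_0)$.
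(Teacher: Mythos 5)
Your proof is correct and follows essentially the same route as the paper: an $L_2$ energy estimate obtained by testing with $w$, the same anisotropic Ladyzhenskaya-type bound $\|w\|_{L_{4,x_\mh}L_{2,x_3}}^2\leq \|w\|_{L_2}\|\nabla w\|_{L_2}$ for the term $\rho(w_\mh\cdot\nabla_\mh v^\mtd)\cdot w$, and Lemmas \ref{lem:2DNS-L2}--\ref{lem:2DNS-2} (decay on $[1,\infty)$, maximal regularity on $[0,1]$) to integrate the $v^\mtd$-coefficients in time and recover the constant $B(v^\mtd_0)$. The only cosmetic difference is that you absorb $-hw_t-h(v\cdot\nabla w)$ into the $\rho$-weighted energy $\tfrac12\tfrac{d}{dt}\int_{\R^3}\rho|w|^2\,dx$ via the mass equation (using $\tfrac12\leq\rho\leq\tfrac32$), whereas the paper keeps these terms on the right and integrates by parts in time, absorbing the resulting boundary term through $\|h_0\|_{L_\infty}\leq\tfrac12$; both manipulations rest on the same transport identity and give the stated bound.
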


\begin{proof}[Proof of Proposition \ref{prop:eneR2}]
From the first equation of \eqref{pertS1}, the $L_p$-conservation \eqref{hLp} is directly deduced from the property of the transport equation.
\vskip0.1cm

Next we prove \eqref{wL2es}. Taking the $L^2$-inner product of the second equation of \eqref{pertS1} with $w$, we immediately have
\begin{equation}\label{eq.L2-dv}
\begin{split}
  \frac{1}{2}\frac{\dd}{\dd t} \| w(t)\|^2_{L^2}+\|\nabla w(t)\|^2_{L^2}
  =& -\int_{\R^3}h (v^\mtd)_t\cdot  w\, \dd x - \int_{\R^3} h ( w)_t\cdot  w\, \dd x
  -\int_{\R^3}h (v\cdot\nabla) w\cdot  w \,\dd x\\
  &-\int_{\R^3}\rho( w_\mh \cdot\nabla_\mh)v^\mtd\cdot  w \,\dd x -\int_{\R^3}h (v^\mtd_\mh \cdot\nabla_\mh)v^\mtd\cdot  w \,\dd x.
\end{split}
\end{equation}
From H\"older's inequality and Cauchy's inequality, we get
\begin{equation*}
\begin{split}
  \left|\int_{\R^3} h (v^\mtd)_t\cdot w(t,x)\,\dd x\right| & \leq \|h\|_{L_2(\R^3)} \|\partial_t v^\mtd\|_{L_\infty(\R^2)} \|w\|_{L_2(\R^3)} \\
  & \leq \frac{1}{2}\|h_0\|_{L_2(\R^3)}^2 \|\partial_t v^\mtd\|_{L_\infty(\R^2)} + \frac{1}{2} \|\partial_t v^\mtd\|_{L_\infty(\R^2)}\|w\|_{L_2(\R^3)}^2,
\end{split}
\end{equation*}
and
\begin{equation*}
\begin{split}
  \left|\int_{\R^3}h\, v^\mtd_\mh\cdot\nabla_\mh v^\mtd\cdot w(t,x)\, \dd x\right|
  \leq \,& \|h\|_{L_2(\R^3)} \|v^\mtd_\mh\|_{L_\infty(\R^2)} \|\nabla_\mh v^\mtd\|_{L_\infty(\R^2)} \|w\|_{L_2(\R^3)} \\
  \leq \, & \frac{1}{2} \|h_0\|_{L_2(\R^3)}^2 \|v^\mtd_\mh\|_{L_\infty(\R^2)}^2 + \frac{1}{2} \|\nabla_\mh v^\mtd\|_{L_\infty(\R^2)}^2 \| w\|_{L_2(\R^3)}^2.
\end{split}
\end{equation*}
By virtue of the H\"older inequality, the following interpolation inequality
\begin{equation}\label{eq:IntP}
  \|f\|_{L_4(\R^2)}\leq C_0 \|f\|_{L_2(\R^2)}^{1/2} \|\nabla_\mh f\|_{L_2(\R^2)}^{1/2}
\end{equation}
and the Cauchy inequality, we find
\begin{equation*}
\begin{split}
  \left|\int_{\R^3} \rho\, ( w_\mh\cdot\nabla_\mh v^\mtd)\cdot w(t,x)\,\dd x \right| & \leq \|\rho\|_{L_\infty(\R^3)} \|\nabla_\mh v^\mtd\|_{L_2(\R^2)}\| w\|_{L_{4,x_{\mh}}L_{2,x_3}(\R^3)}^2 \\
  & \leq \|\rho_0\|_{L_\infty(\R^3)} \|\nabla_\mh v^\mtd\|_{L_2(\R^2)} \|\nabla w\|_{L_2(\R^3)} \| w\|_{L_2(\R^3)} \\
  & \leq \frac{1}{2} \|\nabla  w\|_{L_2(\R^3)}^2 + C \|\rho_0\|_{L_\infty(\R^3)}^2 \|\nabla_\mh v^\mtd\|_{L_2(\R^2)}^2 \| w\|_{L_2(\R^3)}^2.
\end{split}
\end{equation*}
Plunging these above estimates into \eqref{eq.L2-dv} leads to
\begin{equation*}
\begin{split}
  &\frac{\dd}{\dd t}\| w(t)\|^2_{L^2(\R^3)}+\|\nabla w(t)\|^2_{L^2(\R^3)}\\
  \leq& - 2\int_{\R^3}h ( w)_t \cdot w\, \dd x - 2\int_{\R^3} h (v \cdot\nabla) w \cdot w\,\dd x
  + \left( \|\partial_t v^\mtd\|_{L_\infty(\R^2)}
  + \|\nabla_\mh v^\mtd\|_{L_\infty}^2 \right) \| w\|_{L_2}^2 \\
  & + C \|\rho_0\|_{L_\infty}^2 \|\nabla_\mh v^\mtd\|_{L_2}^2 \| w\|_{L_2}^2 + \|h_0\|_{L_2}^2 \left( \|\partial_t v^\mtd\|_{L_\infty}
  + \|v^\mtd_\mh\|_{L_\infty}^2\right) .
\end{split}
\end{equation*}
Through integrating in time, and using the following inequality deduced from the integration by parts
\begin{equation*}
\begin{split}
  &-2\int_0^t\int_{\R^3}h\, w_\tau \cdot w\dd x\mathrm{d}\tau - 2\int_0^t\int_{\R^3} h (v\cdot\nabla) w\cdot  w\dd x\mathrm{d}\tau\\
  =&-\int_0^t\int_{\R^3} h \,\partial_\tau| w|^2\dd x\mathrm{d}\tau -\int_0^t\int_{\R^3} h\, (v \cdot\nabla)| w|^2\dd x\mathrm{d}\tau\\
  =& \int_{\R^3}h_0(x) |w_0(x)|^2 \dd x - \int_{\R^3} h(t,x) |w(t,x)|^2\dd x +\int_0^t\int_{\R^3}(\partial_\tau h+v\cdot\nabla h) |w|^2\dd x\mathrm{d}\tau \\
  \leq & \|h_0\|_{L_\infty}\|w_0\|_{L_2}^2 + \|h_0\|_{L_\infty}\|w(t)\|_{L_2}^2,
\end{split}
\end{equation*}
we have
\begin{equation}\label{wes-key}
\begin{split}
  &\| w(t)\|_{L_2(\R^3)}^2 + \int_0^t \|\nabla  w(\tau)\|_{L_2(\R^3)}^2\,\dd\tau \\
  \leq\, &(1+\|h_0\|_{L_\infty})\| w_0\|_{L_2}^2 + \|h_0\|_{L_\infty}\|w(t)\|_{L_2}^2 +\int_0^t \big( \|\partial_\tau v^\mtd(\tau)\|_{L_\infty}
  +  \|\nabla_\mh v^\mtd(\tau)\|_{L_\infty}^2 \big) \| w(\tau)\|_{L_2}^2 \dd\tau \\
  & + C \|\rho_0\|_{L_\infty}^2 \int_0^t\|\nabla_\mh v^\mtd(\tau)\|_{L_2}^2 \| w(\tau)\|_{L_2}^2\dd\tau + \|h_0\|_{L_2}^2 \int_0^t \left(  \|\partial_\tau v^\mtd\|_{L_\infty(\R^2)}
  +\|v^\mtd_\mh\|_{L_\infty(\R^2)}^2\right) \dd \tau.
\end{split}
\end{equation}
By letting
$  \|h_0\|_{L_\infty(\R^3)}\leq 1/2,$
it follows from Gr\"onwall's inequality that
\begin{equation}\label{L2estR}
\begin{split}
  \| w(t)\|_{L_2(\R^3)}^2 + \int_0^t\|\nabla  w(\tau)\|_{L_2(\R^3)}^2\,\dd\tau \leq  \widetilde{B}_1\, e^{\widetilde{B}_2},
\end{split}
\end{equation}
where
\begin{equation*}
\begin{split}
  & \widetilde{B}_1: = 3\| w_0\|_{L_2(\R^3)}^2 + 2\|h_0\|_{L_2(\R^3)}^2 \left( \|\partial_t v^\mtd\|_{L_1(\R^+; L_\infty(\R^2))} +  \|v^\mtd_\mh\|_{L_2(\R^+; L_\infty(\R^2))}^2 \right), \\
  & \widetilde{B}_2:=C \|\partial_t v^\mtd\|_{L_1(\R^+; L_\infty(\R^2))} + C \|\nabla_\mh v^\mtd\|_{L_2(\R^2\times \R^+)}^2 \|\rho_0\|_{L_\infty}^2 +  \|\nabla_\mh v^\mtd\|_{L_2(\R^+;L_\infty(\R^2))}^2.
\end{split}
\end{equation*}
Thanks to \eqref{decHNS2d4} and \eqref{v2d-es1} (from $W^1_p(\R^2)\hookrightarrow L_\infty(\R^2)$), we deduce that
\begin{equation*}
\begin{split}
  \int_0^\infty\|\partial_t v^\mtd\|_{L_\infty(\R^2)}\dd t  & \leq \|\partial_t v^\mtd\|_{L_p(0, 1; W^1_p(\R^2))}
  + C  \|v^\mtd_0\|_{L_2}  e^{C(1+\|v^\mtd_0\|_{L_2}^4)}\int_1^\infty \frac{1}{t^{3/2}} \dd t \\
  & \leq  C  \Big( \|v^\mtd_0\|_{L_2\cap \dot B^{3-2/p}_{p,p}(\R^2)}^4 + 1 \Big)  e^{C(1+\|v^\mtd_0\|_{L_2}^4)}.
\end{split}
\end{equation*}
Similarly as estimating \eqref{fact4} and using \eqref{estma0}, we find
\begin{align*}
  \|\nabla_\mh v^\mtd\|_{L_2(\R^+; L_\infty(\R^2))} 
  & \leq C  \| v^\mtd\|_{L_p(0,1; L_2\cap \dot W^2_p(\R^2))} + C \|v^\mtd_0\|_{L_2(\R^2)}e^{C(1+\|v^\mtd_0\|_{L_2}^4)} \bigg(\int_1^\infty t^{-2} \dd t\bigg)^{1/2}\\
  & \leq C  \Big( \|v^\mtd_0\|_{L_2\cap \dot B^{3-2/p}_{p,p}(\R^2)}^3 + 1 \Big)  e^{C(1+\|v^\mtd_0\|_{L_2}^4)}.
\end{align*}
Thus from \eqref{L2es2D}, \eqref{L2Linf2D}, \eqref{v2d-es1} and the continuous embedding $L_2\cap \dot B^{3-2/p}_{p,p}(\R^2)\hookrightarrow \dot W^1_\infty(\R^2)$ for $p>3$,
we further have
\begin{align}
  & \widetilde{B}_1\leq  3\| w_0\|_{L_2(\R^3)}^2 + C \|h_0\|_{L_2(\R^3)}^2\Big( \|v^\mtd_0\|_{L_2\cap \dot B^{3-2/p}_{p,p}(\R^2)}^4 + 1 \Big)e^{C(1+\|v^\mtd_0\|_{L_2}^4)}, \nonumber\\
  & \widetilde{B}_2\leq
  C \left( 1+ \|\rho_0\|_{L_\infty(\R^3)}\right)\Big( \|v^\mtd_0\|_{L_2\cap \dot B^{3-2/p}_{p,p}(\R^2)}^4 + 1 \Big)e^{C(1+\|v^\mtd_0\|_{L_2}^4)},\nonumber
\end{align}
then inserting into \eqref{L2estR} leads to the desired estimate \eqref{wL2es}.
\end{proof}

The next result is concerned with the crucial $L_p$-based a priori estimate of $w$ under more regular assumption of initial data $w_0$.
\begin{proposition}\label{prop:apR2}
  Let $h_0=\rho_0-1\in L_2\cap L_\infty(\R^3)$, $v^\mtd_0\in L_2\cap \dot B^{3-2/p}_{p,p}(\R^2)$ and $w_0=v_0-v^\mtd_0\in L_2\cap \dot B^{2-2/p}_{p,p}(\R^3)$ with $p>3$,
and let $(h,w)$ be a sufficiently smooth solution to the system \eqref{pertS1}-\eqref{pertS2} over $\R^3\times [0,T]$. There exists a small absolute constant $\bar{c}_*>0$ such that if $(h_0,w_0,v^\mtd_0)$ satisfies
\begin{equation}\label{wh-cd}
  \left(\|h_0\|_{L_2\cap L_\infty(\R^3)}+\| w_0\|_{L_2\cap \dot B^{2-2/p}_{p,p}(\R^3)} \right)
  \exp\left\{C' \left(\|v^\mtd_0\|_{L_2\cap \dot B^{3-2/p}_{p,p}(\R^2)}^{4p} +1\right)e^{C'(1+\|v^\mtd_0\|_{L_2}^4)}\right\} \leq \bar{c}_*,
\end{equation}
with $C'$ some absolute constant appearing in \eqref{B1}, then we have
\begin{equation}\label{w-es}
\begin{split}
  \sup_{t\leq T} \|w(t)\|_{\dot B^{2-\frac{2}{p}}_{p,p}(\mathbb{R}^3)} + \|w_t,\nabla^2 w,\nabla q\|_{L_p(\R^3\times(0,T))} \leq C \bar{c}_*.
\end{split}
\end{equation}
In the above, the time interval $[0,T]$ can be replaced by $[0,\infty)$.
\end{proposition}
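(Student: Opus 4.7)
I plan to apply the Stokes maximal-regularity estimate of \LEM{lem:StokesR} on $\R^3\times(0,t)$ to the equation
\[
  w_t-\Delta w+\nabla q=-h(v^\mtd)_t-hw_t-\rho(v\cdot\nabla w)-\rho(w_\mh\cdot\nabla_\mh v^\mtd)-h(v^\mtd_\mh\cdot\nabla_\mh v^\mtd),
\]
obtained by substituting $v=v^\mtd+w$ and $\rho=1+h$ into the right-hand side of \eqref{pertS1}--\eqref{pertS2}. Setting
\[
  \Phi(t):=\sup_{s\leq t}\|w(s)\|_{\dot B^{2-2/p}_{p,p}(\R^3)}+\|w_t,\nabla^2w,\nabla q\|_{L_p(\R^3\times(0,t))},
\]
\LEM{lem:StokesR} gives $\Phi(t)\leq C\|w_0\|_{\dot B^{2-2/p}_{p,p}}+C\|\text{RHS}\|_{L_p(\R^3\times(0,t))}$, and the game is to bound each forcing so that a Grönwall-type closure is available uniformly in $t$.

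\textbf{Term by term.} The term $hw_t$ contributes $\|hw_t\|_{L_p}\leq\|h_0\|_{L_\infty}\Phi(t)$ and is absorbed by smallness of $\|h_0\|_{L_\infty}$. The two ``pure data'' terms $h(v^\mtd)_t$ and $h(v^\mtd_\mh\cdot\nabla_\mh v^\mtd)$ mix a 2D function with the 3D $h$, so that a naive bound yields $\|v^\mtd\|_{L_p(\R^3)}=\infty$. I circumvent this with the slicing inequality $\int_\R|h|^p\,dx_3\leq\|h\|_{L_\infty}^{p-2}\int_\R h^2\,dx_3$, which gives
\[
  \|h(v^\mtd)_t\|_{L_p(\R^3\times(0,t))}\leq\|h_0\|_{L_\infty}^{1-2/p}\|h_0\|_{L_2}^{2/p}\|(v^\mtd)_t\|_{L_p(\R^+;L_\infty(\R^2))}\leq C\|h_0\|_{L_2\cap L_\infty}B(v^\mtd_0),
\]
where the $L_p(L_\infty(\R^2))$ bound uses \LEM{lem:2DNS-2} together with $W^1_p(\R^2)\hookrightarrow L_\infty(\R^2)$; the second data term is handled identically. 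For the two linear-in-$w$ terms $\rho(v^\mtd\cdot\nabla w)$ and $\rho(w_\mh\cdot\nabla_\mh v^\mtd)$, the 2D coefficients are \emph{not} small but lie in $L_p(\R^+;L_\infty(\R^2))$; using the embeddings $\dot B^{2-2/p}_{p,p}(\R^3)\hookrightarrow L_\infty\cap\dot W^1_p$ (valid for $p>3$) and the interpolation $\|w\|_{L_p(\R^3)}\leq C\|w\|_{L_2}^{2/p}\|w\|_{\dot B^{2-2/p}_{p,p}}^{1-2/p}$ fed by the $L_2$-energy bound of \PROP{prop:eneR2}, these produce Grönwall kernels of the form $\int_0^t K(\tau)^p\Phi(\tau)^p\,d\tau$ with $\|K\|_{L_p(\R^+)}\leq CB(v^\mtd_0)$. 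Finally, the nonlinear piece $\rho(w\cdot\nabla w)$ of $\rho(v\cdot\nabla w)$ is bounded by $C\Phi(t)^2$ through the embedding $\dot B^{2-2/p}_{p,p}\hookrightarrow L_\infty$ and parabolic space-time interpolation of $\nabla w$.

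\textbf{Closure and main difficulty.} Collecting all estimates and raising the maximal regularity inequality to the $p$-th power (using $(a+b)^p\leq 2^{p-1}(a^p+b^p)$), I obtain schematically
\[
  \Phi(t)^p\leq C_1\bigl(\|w_0\|_{\dot B^{2-2/p}_{p,p}}^p+\|h_0\|_{L_2\cap L_\infty}^p B(v^\mtd_0)^p\bigr)+C_1\|h_0\|_{L_\infty}^p\Phi(t)^p+C_1\int_0^t K(\tau)^p\Phi(\tau)^p\,d\tau+C_1\Phi(t)^{2p}.
\]
Choosing $\|h_0\|_{L_\infty}$ small absorbs the $\Phi(t)^p$ term; a standard continuity/bootstrap argument (open on $\{t:\Phi(t)\leq\delta\}$ for $\delta$ small) absorbs the nonlinear $\Phi(t)^{2p}$; Grönwall's lemma applied to the integral then produces
\[
  \Phi(t)\leq C'\bigl(\|w_0\|_{\dot B^{2-2/p}_{p,p}}+\|h_0\|_{L_2\cap L_\infty}\,B(v^\mtd_0)\bigr)\exp\!\bigl(C''B(v^\mtd_0)^{p}\bigr).
\]
The double-exponential structure built into \eqref{wh-cd} is precisely what is needed to dominate this exponential factor, yielding $\Phi(t)\leq C\bar{c}_*$ as in \eqref{w-es}. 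The \emph{main obstacle} is arranging the two linear-in-$w$ terms as a Grönwall-integrable kernel without introducing any $T$-dependence: this forces the slicing trick on the mixed 2D/3D products, and feeding in the $L_2(\R^3)$ control from \PROP{prop:eneR2} is essential to get a time-integrable $K(\tau)$. The global character of the bound then relies on the integrability of the $v^\mtd$-norms supplied by Lemmas \ref{lem:2DNS-L2}--\ref{lem:2DNS-2}, which ensures $\|K\|_{L_p(\R^+)}<\infty$ uniformly in $t$.
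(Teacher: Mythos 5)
Your overall scheme coincides with the paper's proof: apply Lemma~\ref{lem:StokesR} to the $w$-equation with the forcing split exactly into the paper's $F_1,\dots,F_6$, absorb $hw_t$ by smallness of $\|h_0\|_{L_\infty}$, bound the mixed 2D/3D data terms through $\|h\|_{L_p(\R^3)}\le\|h_0\|_{L_\infty}^{1-2/p}\|h_0\|_{L_2}^{2/p}$ (your slicing inequality is the paper's use of \eqref{hLp} with $h_0\in L_2\cap L_\infty$) times $\|(v^\mtd)_t\|_{L_p(\R^+;L_\infty)}$, treat the linear-in-$w$ terms as Gr\"onwall kernels $\|v^\mtd\|_{L_\infty}^p+\|\nabla_\mh v^\mtd\|_{L_\infty}^p$ integrable by Lemmas~\ref{lem:2DNS-L2}--\ref{lem:2DNS-2}, and close by bootstrap plus Gr\"onwall under the double-exponential smallness \eqref{wh-cd}. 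This is the paper's argument leading to \eqref{XYwes} and \eqref{XYest1}.

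The one step that does not hold as you stated it is the quadratic term: the claim $\|\rho(w\cdot\nabla w)\|_{L_p(\R^3\times(0,t))}\le C\Phi(t)^2$ uniformly in $t$ does not follow from ``$\dot B^{2-2/p}_{p,p}\hookrightarrow L_\infty$ and parabolic interpolation of $\nabla w$''. First, the homogeneous embedding $\dot B^{2-2/p}_{p,p}(\R^3)\hookrightarrow L_\infty$ is false (low frequencies are not controlled); one needs $L_2\cap\dot B^{2-2/p}_{p,p}\hookrightarrow L_\infty$, i.e. the energy norm must enter. Second, $\|\nabla w\|_{L_p(0,t;L_p)}$ is not dominated by $\Phi(t)$: in space $\dot W^1_p$ lies \emph{below} both $\dot B^{2-2/p}_{p,p}$ and $\dot W^2_p$ (since $2-2/p>1$), so interpolation between the two components of $\Phi$ cannot produce it, and estimating it through $\|\nabla w\|_{L_\infty(0,t;L_p)}$ reintroduces a factor $t^{1/p}$ — precisely the $T$-dependence you correctly exclude elsewhere. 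The fix is the one you already use for the linear terms: bring in $\|w\|_{L_\infty(0,t;L_2)}$ and $\|\nabla w\|_{L_2(0,t;L_2)}$ from Proposition~\ref{prop:eneR2} (globally small, $\lesssim\|(w_0,h_0)\|_{L_2}e^{B(v^\mtd_0)}$) and interpolate; this is what forces the paper's case split $p>5$, $p=5$, $3<p<5$ and yields a bound of the form $(\mathcal{X}_w+\mathcal{Y}_w)^{\frac{8p-10}{7p-10}}\bigl(\|(w_0,h_0)\|_{L_2}e^{B(v^\mtd_0)}\bigr)^{\frac{p(6p-10)}{7p-10}}$ rather than a clean $\Phi^{2p}$. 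With that replacement your continuity/bootstrap closure goes through exactly as you describe (indeed more easily, since the superlinear term carries a small data prefactor), and the rest of your argument is sound.
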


\begin{proof}[Proof of Proposition \ref{prop:apR2}]
Applying \eqref{Lp-est2} to the second equation of the system \eqref{pertS1}, we have
\begin{equation}\label{est1}
\begin{split}
  \sup_{\tau\leq t} \|w\|_{\dot B^{2-\frac{2}{p}}_{p,p}(\mathbb{R}^3)}^p + \|\partial_\tau w,\nabla^2 w,\nabla q\|_{L_p(\R^3\times(0,t))}^p
  \leq \;  C \left( \sum_{i=1}^6\no{F_i}_{L_p(\mathbb{R}^3\times(0,t))}^p   +  \|w_0\|_{B^{2-2/p}_{p,p}(\mathbb{R}^3)}^p \right),
\end{split}
\end{equation}
with
\begin{equation}\label{Fia}
\begin{split}
  & F_1:=\rho(v^\mtd\cdot\nabla w), \quad F_2:=\rho(w\cdot\nabla w),\quad F_3:=h\, (v^\mtd)_\tau, \\
  & F_4:= h\, w_\tau,\quad F_5:=\rho(w_\mh\cdot\nabla_\mh v^\mtd),\quad F_6:=h (v^\mtd_\mh \cdot\nabla_\mh v^\mtd).
\end{split}
\end{equation}

We estimate the terms on the right-hand side one by one.
For $F_1$, in view of the interpolation inequality
\begin{equation}\label{InPes2}
  \|\nabla w\|_{L_p(\R^3)}\leq C \|w\|_{\dot B^{2-2/p}_{p,p}(\R^3)}^{\frac{5p-6}{7p-10}} \|w\|_{L_2(\R^3)}^{\frac{2p-4}{7p-10}}
  \leq C\left( \|w\|_{\dot B^{2-2/p}_{p,p}(\R^3)} + \|w\|_{L_2(\R^3)}\right),
\end{equation}
and using \eqref{wL2es}, the fact $\|v^\mtd\|_{L_p(\R^+;L_\infty (\R^2))}\leq C B(v^\mtd_0)$ (from \eqref{fact3} and \eqref{B0}), we have
\begin{equation*}{}
\begin{split}
  \|F_1\|_{L_p(\R^3\times (0,t))}^p 
  & \leq \int_0^t \|\rho(\tau)\|_{L_\infty(\R^3)}^p \|\nabla w(\tau)\|_{L_p(\R^3)}^p \|v^\mtd(\tau)\|_{L_\infty(\R^2)}^p \dd \tau \\
  & \leq C\|\rho_0\|_{L_\infty}^p \int_0^t \|w(\tau)\|_{\dot B^{2-2/p}_{p,p}}^p\|v^\mtd(\tau)\|_{L_\infty}^p\dd \tau
  + C \|\rho_0\|_{L_\infty}^p \|w\|_{L_\infty(0,t; L_2)}^p \int_0^t \|v^\mtd\|_{L_\infty}^p\dd \tau \\
  & \leq C\int_0^t \|w(\tau)\|_{\dot B^{2-2/p}_{p,p}}^p\|v^\mtd(\tau)\|_{L_\infty(\R^2)}^p\dd \tau +
  C \|( w_0,h_0)\|_{L_2}^p e^{2p B(v^\mtd_0)},
\end{split}
\end{equation*}
where in the last inequality we also used $\|\rho_0\|_{L_\infty(\R^3)}\leq 1+\|h_0\|_{L_\infty(\R^3)}\leq 2$.
For $F_2$, if $p> 5$, taking advantage of the following interpolation inequalities
\begin{equation*}
\begin{split}
  \|w\|_{L_\infty(0,t; L_\infty(\R^3))} & \leq C\|w\|_{L_\infty(0,t; L_2)}^{\frac{4p-10}{7p-10}} \|w\|_{L_\infty(0,t;\dot B^{2-2/p}_{p,p})}^{\frac{3p}{7p-10}},\\
  \|\nabla w\|_{L_p(0,t; L_p(\R^3))} & \leq C \|\nabla w\|_{L_2(0,t; L_2)}^{\frac{2}{p}} \|\nabla w\|_{L_\infty(0,t; L_\infty)}^{\frac{p-2}{p}}, \\
  &\leq \,C\|\nabla w\|_{L_2(0,t; L_2)}^{\frac{2}{p}} \|w\|_{L_\infty(0,t; L_2)}^{\frac{(2p-10)(p-2)}{(7p-10)p}} \|w\|_{L_\infty(0,t;\dot B^{2-2/p}_{p,p})}^{\frac{5(p-2)}{7p-10}},
\end{split}
\end{equation*}
we see that
\begin{equation*}
\begin{split}
  \|F_2\|_{L_p(\R^3\times (0,t))}^p & \leq \|\rho_0\|_{L_\infty(\R^3)}^p \|w\|_{L_\infty(0,t; L_\infty(\R^3))}^p \|\nabla w\|_{L_p(0,t; L_p(\R^3))}^p \\
  & \leq C \|w\|_{L_\infty (0,t; \dot B^{2-2/p}_{p,p})}^{\frac{8p-10}{7p-10}p} \| w\|_{L_\infty(0,t; L_2)}^{\frac{6p^2-24p+20}{7p-10}}\|\nabla w\|_{L_2(0,t; L_2)}^2 \\
  & \leq C \|w\|_{L_\infty (0,t; \dot B^{2-2/p}_{p,p})}^{\frac{8p-10}{7p-10}p} \left(  \| w\|_{L_\infty(0,t; L_2)} + \|\nabla w\|_{L_2(0,t; L_2)} \right)^{\frac{6p-10}{7p-10}p}\\
  & \leq C  \|w\|_{L_\infty (0,t; \dot B^{2-2/p}_{p,p})}^{\frac{8p-10}{7p-10}p} \left( \|( w_0,h_0)\|_{L_2} e^{B(v^\mtd_0)} \right)^{\frac{p(6p-10)}{7p-10}};
\end{split}
\end{equation*}
if $p=5$, we similarly get
\begin{equation*}
\begin{split}
  \|F_2\|_{L_5(\R^3\times (0,t))}^5 & \leq \|\rho_0\|_{L_\infty(\R^3)}^p\|w\|_{L_\infty(0,t; L_\infty(\R^3))}^5 \|\nabla w\|_{L_5(0,t; L_5(\R^3))}^5 \\
  & \leq C\|w\|_{L_\infty(0,t; L_2)}^2 \|w\|_{L_\infty(0,t;\dot B^{\frac{8}{5}}_{5,5})}^3
  \|\nabla w\|_{L_2(0,t; L_2)}^2 \|\nabla w\|_{L_\infty(0,t; \dot B^{\frac{3}{5}}_{5,5})}^3 \\
  & \leq C \|w\|_{L_\infty (0,t; \dot B^{8/5}_{5,5}(\R^3))}^6 \left(  \| w\|_{L_\infty(0,t; L_2)} + \|\nabla w\|_{L_2(0,t; L_2)} \right)^4 \\
  & \leq C \|w\|_{L_\infty (0,t; \dot B^{8/5}_{5,5}(\R^3))}^6  \left( \|( w_0,h_0)\|_{L_2}^4 e^{4B(v^\mtd_0)} \right);
\end{split}
\end{equation*}
while if $p\in ]3,5[$, thanks to the interpolation inequalities
\begin{equation*}
\begin{split}
  &\|\nabla w\|_{L_p(0,t; L_p(\R^3))} \leq C \|\nabla w\|_{L_2(0,t; L_2)}^{\frac{2p}{7p-10}}
  \|\nabla w\|_{L_{\frac{5p}{5-p}}(0,t; L_{\frac{5p}{5-p}})}^{\frac{5p-10}{7p-10}} ,\\
  & \|\nabla w\|_{L_{\frac{5p}{5-p}}(0,t; L_{\frac{5p}{5-p}})} \leq C \| w\|_{L_\infty(0,t; \dot B^{2-2/p}_{p,p})}^{\frac{p}{5}} \|\nabla^2 w\|_{L_p(0,t; L_p)}^{\frac{5-p}{5}},
\end{split}
\end{equation*}
we obtain that
\begin{align*}
  & \|F_2\|_{L_p(\R^3\times (0,t))}^p  \\
  \leq\, & \|\rho_0\|_{L_\infty(\R^3)}^p\|w\|_{L_\infty(0,t; L_\infty(\R^3))}^p \|\nabla w\|_{L_p(0,t; L_p(\R^3))}^p \\
  \leq\, & C\|w\|_{L_\infty(0,t; L_2)}^{\frac{4p-10}{7p-10}p} \|w\|_{L_\infty(0,t;\dot B^{2-2/p}_{p,p})}^{\frac{3p}{7p-10}p}
  \|\nabla w\|_{L_2(0,t; L_2)}^{\frac{2p^2}{7p-10}} \|\nabla w\|_{L_{\frac{5p}{5-p}}(0,t; L_{\frac{5p}{5-p}})}^{\frac{5p-10}{7p-10}p} \\
  \leq\, & C \left(  \|w\|_{L_\infty (0,t; \dot B^{2-2/p}_{p,p}(\R^3))}+ \|\nabla^2w\|_{L_p(0,t;L_p)}\right)^{\frac{8p-10}{7p-10}p}
  \left(\| w\|_{L_\infty(0,t; L_2)} + \|\nabla w\|_{L_2(0,t; L_2)} \right)^{\frac{6p-10}{7p-10}p} \\
  \leq \, & C  \left(  \|w\|_{L_\infty (0,t; \dot B^{2-2/p}_{p,p}(\R^3))}+ \|\nabla^2w\|_{L_p(0,t;L_p)}\right)^{\frac{8p-10}{7p-10}p}
  \left( \|( w_0,h_0)\|_{L_2} e^{B(v^\mtd_0)} \right)^{\frac{p(6p-10)}{7p-10}}.
\end{align*}
Thanks to \eqref{v2d-es1} and the continuous embedding $W^1_p(\R^2)\hookrightarrow L_\infty(\R^2)$, we estimate $F_3$ and $F_4$ as follows
\begin{equation*}
\begin{split}
  \|F_3\|_{L_p(\R^3\times (0,t))}^p & \leq \|h\|_{L_\infty(0,t; L_p(\R^3))}^p \|(v^\mtd)_\tau\|_{L_p(0,t; L_\infty(\R^2))}^p \\
  & \leq C\|h_0\|_{L_p(\R^3)}^p  \Big( \|v^\mtd_0\|_{L_2\cap \dot B^{3-2/p}_{p,p}(\R^2)}^{4p}  + 1 \Big)e^{C(1+\|v^\mtd_0\|_{L_2}^4)},
\end{split}
\end{equation*}
and
\begin{equation*}
  \|F_4\|_{L_p(\R^3\times(0,t))}^p \leq \|h_0\|_{L_\infty(\R^3)}^p \|w_\tau\|_{L_p(\R^3\times(0,t))}^p.
\end{equation*}
The treating of $F_5$ is similar to that of $F_1$, and by using the inequality
$\| w\|_{L_p} \leq C \big( \|w\|_{\dot B^{2-2/p}_{p,p}} + \|w\|_{L_2}\big)$,
we have
\begin{equation*}
\begin{split}
  \|F_5\|_{L_p(\R^3\times (0,t))}^p 
  & \leq \|\rho_0\|_{L_\infty(\R^3)} \int_0^t \|w(\tau)\|_{L_p(\R^3)}^p \|\nabla_\mh v^\mtd(\tau)\|_{L_\infty(\R^2)}^p \dd \tau \\
  & \leq C \int_0^t \|w(\tau)\|_{\dot B^{2-2/p}_{p,p}}^p\|\nabla_\mh v^\mtd(\tau)\|_{L_\infty(\R^2)}^p\dd \tau
  + C \|w\|_{L_\infty(0,t; L_2)}^p \int_0^t \|\nabla_\mh v^\mtd(\tau)\|_{L_\infty(\R^2)}^p\dd \tau.
\end{split}
\end{equation*}
By using the following estimate (deduced from \eqref{decHNS2d3} and \eqref{estma0})
\begin{equation}\label{fact6}
\begin{split}
  \|\nabla_\mh v^\mtd\|_{L_p(\R^+; L_\infty)} & \leq  \|v^\mtd\|_{L_p(0,1; L_2\cap \dot W^2_p)} +
  C \|v^\mtd_0\|_{L_2} e^{C(1+\|v^\mtd_0\|_{L_2}^4)} \left(\int_1^\infty \frac{1}{\tau^p} \dd \tau \right)^{1/p}\\
  & \leq C   \Big(\|v^\mtd_0\|_{L_2\cap \dot B^{3-2/p}_{p,p}(\R^2)}^3 +1\Big) e^{C(1+\|v^\mtd_0\|_{L_2}^4)},
\end{split}
\end{equation}
we get (recalling $B(v^\mtd_0)$ is defined by \eqref{B0})
\begin{equation*}
\begin{split}
  & \|F_5\|_{L_p(\R^3\times (0,t))}^p \\
  \leq & C \int_0^t \|w\|_{\dot B^{2-2/p}_{p,p}}^p\|\nabla_\mh v^\mtd\|_{L_\infty(\R^2)}^p\dd \tau
  + C \|( w_0,h_0)\|_{L_2}^p  e^{pB(v^\mtd_0)} \Big( \|v^\mtd_0\|_{L_2\cap \dot B^{3-2/p}_{p,p}}^{3p}  + 1\Big)e^{Cp(1+\|v^\mtd_0\|_{L_2}^4)} \\
  \leq & C \int_0^t \|w(\tau)\|_{\dot B^{2-2/p}_{p,p}}^p\|\nabla_\mh v^\mtd(\tau)\|_{L_\infty(\R^2)}^p\dd \tau
  + C \|( w_0,h_0)\|_{L_2}^p  e^{2 pB(v^\mtd_0)}.
\end{split}
\end{equation*}
For $F_6$, from \eqref{L2es2D}, \eqref{bomgLpes2}, \eqref{fact4} and \eqref{fact6}, we infer that
\begin{equation*}
\begin{split}
  \|F_6\|_{L_p(\R^3\times (0,t))}^p   \leq & \,\|h\|_{L_\infty(0,t; L_p(\R^3))}^p \|v^\mtd_\mh\cdot\nabla_\mh v^\mtd\|_{L_p(0,t; L_\infty(\R^2))}^p \\
  \leq &\, \|h_0\|_{L_p(\R^3)}^p \|v^\mtd_\mh\|_{L_\infty(0,t; L_2\cap \dot W^1_p(\R^2))}^p \|\nabla_\mh v^\mtd\|_{L_p(0,t; L_\infty(\R^2))}^p \\
  \leq & \, \|h_0\|_{L_p(\R^3)}^p  \Big( \|v^\mtd_0\|_{L_2\cap \dot B^{3-2/p}_{p,p}(\R^2)}^{4p} +1 \Big)  e^{C(1+\|v^\mtd_0\|_{L_2}^4)}.
\end{split}
\end{equation*}
Denoting by
\begin{equation}\label{XtYt}
  \mathcal{X}_w(t):=\sup_{\tau\leq t} \|w(\tau)\|_{\dot B^{2-\frac{2}{p}}_{p,p}(\mathbb{R}^3)}^p, \quad
  \mathcal{Y}_w(t):= \|w_\tau,\nabla^2 w,\nabla q\|_{L_p(\R^3\times(0,t))}^p,
\end{equation}
and assuming
\begin{equation}\label{hcd1}
  \|h_0\|_{L_\infty(\R^3)}\leq \left( \frac{1}{2C}\right)^{1/p},
\end{equation}
we collect the above estimates to find that for every $p>3$,
\begin{equation}\label{XYwes}
\begin{split}
  \mathcal{X}_w(t) + \mathcal{Y}_w(t) \leq & \, C\int_0^t \mathcal{X}_w(\tau) \left( \|v^\mtd(\tau)\|_{L_\infty(\R^2)}^p + \|\nabla_\mh v^\mtd(\tau)\|_{L_\infty(\R^2)}^p\right)\dd \tau + \\
  & \, + C_1 \big( \mathcal{X}_w(t) + \mathcal{Y}_w(t) \big)^{\frac{8p-10}{7p-10}}
  \left( \|( w_0,h_0)\|_{L_2} e^{B(v^\mtd_0)} \right)^{\frac{p(6p-10)}{7p-10}} +\\
  & \,+ C \| w_0\|_{\dot B^{2-\frac{2}{p}}_{p,p}(\R^3)}^p +   C \left( \| w_0\|_{L_2(\R^3)}^p  + \|h_0\|_{L_2\cap L_\infty(\R^3)}^p \right) e^{2pB(v^\mtd_0)} .
\end{split}
\end{equation}
We set
\begin{equation}
  T_*: = \sup \Big\{t>0: \mathcal{X}_w(t) + \mathcal{Y}_w(t)\leq (2C_1)^{-\frac{7p-10}{p}}\left( \|( w_0,h_0)\|_{L_2} e^{B(v^\mtd_0)}\right)^{-(6p-10)} \Big\} ,
\end{equation}
which satisfies $T_*>0$ from the local existence part. Then for every $t\leq T_*$, we have
\begin{equation*}
\begin{split}
  \mathcal{X}_w(t) + \mathcal{Y}_w(t) \leq & \, 2 C\int_0^t \mathcal{X}_w(\tau) \left( \|v^\mtd(\tau)\|_{L_\infty(\R^2)}^p + \|\nabla_\mh v^\mtd(\tau)\|_{L_\infty(\R^2)}^p\right)\dd \tau  \\
  & \,+ 2C \| w_0\|_{\dot B^{2-\frac{2}{p}}_{p,p}}^p +  2 C \left( \| w_0\|_{L_2}^p  + \|h_0\|_{L_2\cap L_\infty}^p \right) e^{2 p B(v^\mtd_0)} .
\end{split}
\end{equation*}
Gr\"onwall's inequality and \eqref{fact3}, \eqref{fact4}, \eqref{fact6} lead to
\begin{equation}
\begin{split}
  & \mathcal{X}_w(t) + \mathcal{Y}_w(t)  \\
  \leq\,& 2C \Big( \| w_0\|_{\dot B^{2-\frac{2}{p}}_{p,p}}^p + \left( \| w_0\|_{L_2}^p  + \|h_0\|_{L_2\cap L_\infty}^p \right) e^{2 pB(v^\mtd_0)}\Big)
  e^{2C \left(\|v^\mtd\|_{L_p(0,t; L_\infty)}^p + \|\nabla_\mh v^\mtd\|_{L_p(0,t; L_\infty)}^p \right)} \\
  \leq \, & 2 C\Big( \| w_0\|_{L_2\cap \dot B^{2-2/p}_{p,p}}^p  + \|h_0\|_{L_2\cap L_\infty}^p \Big) e^{p B_1(v^\mtd_0)} ,
\end{split}
\end{equation}
with
\begin{equation}
\begin{split}\label{B1}
  B_1(v^\mtd_0) :=& \,2 B(v^\mtd_0) + C \Big( \|v^\mtd_0\|_{L_2\cap \dot B^{3-2/p}_{p,p}(\R^2)}^{3p}  + 1\Big)e^{C(1+\|v^\mtd_0\|_{L_2}^4)} \\
  \leq & C' \Big( \|v^\mtd_0\|_{L_2\cap \dot B^{3-2/p}_{p,p}(\R^2)}^{4p}  + 1\Big)e^{C'(1+\|v^\mtd_0\|_{L_2}^4)} .
\end{split}
\end{equation}
Hence, if $( w_0,h_0)$ are small enough so that
\begin{equation*}
  2 C\Big( \| w_0\|_{L_2\cap \dot B^{2-2/p}_{p,p}}^p  + \|h_0\|_{L_2\cap L_\infty}^p \Big) e^{ p B_1(v^\mtd_0)} \leq \frac{1}{2} (2C_1)^{-\frac{7p-10}{p}} \|( w_0,h_0)\|_{L_2}^{-(6p-10)} e^{-(6p-10)B(v^\mtd_0)},
\end{equation*}
equivalently, if $(w_0,h_0)$ satisfies that
\begin{equation}\label{}
  \| w_0\|_{L_2\cap \dot B^{2-2/p}_{p,p}(\R^3)} + \|h_0\|_{L_2\cap L_\infty(\R^3)} \leq (4C)^{-\frac{1}{7p-10}} (2C_1)^{-\frac{1}{p}}  e^{-B_1(v^\mtd_0)},
\end{equation}
the bootstrapping method guarantees that $T_*=T$, and we have that for all $t\in [0,T]$,
\begin{equation}\label{XYest1}
  \mathcal{X}_w(t) + \mathcal{Y}_w(t)\leq 2 C\Big( \| w_0\|_{L_2\cap \dot B^{2-2/p}_{p,p}(\R^3)}^p  + \|h_0\|_{L_2\cap L_\infty(\R^3)}^p \Big) e^{p B_1(v^\mtd_0)}.
\end{equation}
Note that to construct (\ref{XYest1}) we did not use any information about regularity of the gradient of density.
\end{proof}

If the initial density has some gradient regularity, we moreover have the following regularity estimate on the density.
\begin{proposition}\label{prop:apR3}
  Under the assumptions of Proposition \ref{prop:apR2} and additionally assume that $\nabla h_0\in L_3(\R^3)$, then we have
\begin{equation}\label{nh-es}
  \sup_{t\in [0,T]} \|\nabla h(t)\|_{L_3(\mathbb{R}^3)} \leq  \|\nabla h_0\|_{L_3(\R^3)} e^{(1+ T)C(h_0,w_0,v^\mtd_0)},
\end{equation}
where $C(h_0,w_0,v^\mtd_0)$ is depending only on the initial data $h_0,w_0, v^\mtd_0$.
\end{proposition}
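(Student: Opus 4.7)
The plan is to derive a transport-type equation for $\nabla h$, obtain an $L_3$-energy estimate using $\divg v=0$, and reduce matters to controlling $\int_0^T \|\nabla v\|_{L_\infty(\R^3)}\,\dd\tau$ via the a priori bounds already established in Propositions~\ref{prop:eneR2} and~\ref{prop:apR2}.

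First, I would differentiate the transport equation $h_t + v\cdot\nabla h = 0$ in space to obtain
\begin{equation*}
  (\nabla h)_t + v\cdot\nabla(\nabla h) = -(\nabla v)^{\mathrm{T}}\nabla h.
\end{equation*}
Taking the scalar product with $|\nabla h|\,\nabla h$, integrating over $\R^3$, and using $\divg v = 0$ to kill the convective contribution yields
\begin{equation*}
  \frac{1}{3}\frac{\dd}{\dd t}\|\nabla h(t)\|_{L_3(\R^3)}^3 \leq \|\nabla v(t)\|_{L_\infty(\R^3)}\,\|\nabla h(t)\|_{L_3(\R^3)}^3,
\end{equation*}
so Gr\"onwall's inequality gives
\begin{equation*}
  \|\nabla h(t)\|_{L_3(\R^3)} \leq \|\nabla h_0\|_{L_3(\R^3)}\,\exp\left(\int_0^t\|\nabla v(\tau)\|_{L_\infty(\R^3)}\,\dd\tau\right).
\end{equation*}

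The task therefore reduces to estimating $\int_0^T \|\nabla v\|_{L_\infty(\R^3)}\,\dd\tau$. Writing $v = v^\mtd + w$ splits this into a 2D and a 3D contribution. For the 2D piece, H\"older in time together with the bound \eqref{fact6} gives
\begin{equation*}
  \int_0^T\|\nabla_\mh v^\mtd(\tau)\|_{L_\infty(\R^2)}\,\dd\tau \leq T^{1-1/p}\,\|\nabla_\mh v^\mtd\|_{L_p(\R^+;L_\infty(\R^2))},
\end{equation*}
which is finite. For the 3D piece, the key input is the Gagliardo--Nirenberg interpolation in $\R^3$,
\begin{equation*}
  \|\nabla w\|_{L_\infty(\R^3)}\leq C\,\|\nabla^2 w\|_{L_p(\R^3)}^{\theta}\,\|w\|_{L_2(\R^3)}^{1-\theta},\qquad \theta=\frac{5p}{7p-6}\in(0,1)\;\textrm{for}\;p>3,
\end{equation*}
followed by H\"older in time with exponent $p/\theta>1$:
\begin{equation*}
  \int_0^T\|\nabla w(\tau)\|_{L_\infty(\R^3)}\,\dd\tau \leq C\,T^{1-\theta/p}\,\|w\|_{L_\infty(0,T;L_2(\R^3))}^{1-\theta}\,\|\nabla^2 w\|_{L_p(\R^3\times(0,T))}^{\theta}.
\end{equation*}
Both factors on the right are under control by \eqref{wL2es} and \eqref{w-es} in terms of $h_0,w_0,v^\mtd_0$.

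Putting the two bounds together and noting that each of the time exponents $1-1/p$ and $1-\theta/p$ lies in $(0,1)$, so that $T^{1-1/p}+T^{1-\theta/p}\leq 2(1+T)$ for all $T>0$, one obtains $\int_0^T\|\nabla v\|_{L_\infty}\,\dd\tau \leq (1+T)\,C(h_0,w_0,v^\mtd_0)$, and inserting this into the Gr\"onwall bound above produces \eqref{nh-es}. The only delicate point is securing $\theta<1$ in the Gagliardo--Nirenberg step, which is exactly why the hypothesis $p>3$ is imposed here (playing the same role as in the Sobolev embeddings $W^1_p(\R^2)\hookrightarrow L_\infty$ and $L_2\cap\dot W^2_p(\R^3)\hookrightarrow W^1_\infty$ already invoked elsewhere in the paper). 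No further substantive obstacle is anticipated.
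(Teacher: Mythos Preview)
Your proposal is correct and follows essentially the same route as the paper: differentiate the transport equation, run an $L_3$ Gr\"onwall argument, and bound $\int_0^T\|\nabla v\|_{L_\infty}\,\dd\tau$ by splitting $v=v^\mtd+w$ and invoking the a priori estimates \eqref{wL2es}, \eqref{w-es}, \eqref{fact6}. The only cosmetic difference is that for the $w$-piece you use the Gagliardo--Nirenberg inequality with exponent $\theta=5p/(7p-6)$, whereas the paper phrases the same step via the Sobolev embedding $L_2\cap\dot W^2_p(\R^3)\hookrightarrow\dot W^1_\infty(\R^3)$ (cf.\ \eqref{vL1Lip}); both reductions yield the bound $(1+T)C(h_0,w_0,v^\mtd_0)$.
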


\begin{proof}[Proof of Proposition \ref{prop:apR3}]
From the first equation of \eqref{pertS1}, we see that
\begin{equation*}
  \partial_t (\nabla h) + v\cdot\nabla (\nabla h) =- (\nabla v)\cdot \nabla h.
\end{equation*}
By making the scalar product of both sides of the above equation with $((\partial_1 h)^2,(\partial_2 h)^2,(\partial_3 h)^2)$, and integrating on the spatial variables, we get
\begin{equation*}
 \frac{\dd}{\dd t} \|\nabla h\|_{L_3(\R^3)} \leq \|\nabla v\|_{L_\infty} \|\nabla h\|_{L_3(\R^3)}.
\end{equation*}
Integrating on the time interval $[0,T]$ leads to
\begin{equation}\label{eq:nhL3}
 \sup_{0\leq t\leq T} \|\nabla h\|_{L_3(\R^3)}(t) \leq \|\nabla h_0\|_{L_3} \exp \int_0^T \| \nabla v(t)\|_{L_\infty(\R^3)} \dd t.
\end{equation}
Thanks to the Sobolev embedding $\dot W^2_p\cap L_2(\R^n)\hookrightarrow \dot W^1_\infty(\R^n)$ ($n=2,3$) and the a priori estimates \eqref{L2es2D}, \eqref{estma0}, \eqref{wL2es} and \eqref{w-es}, we find
\begin{equation}\label{vL1Lip}
\begin{split}
  & \|\nabla v\|_{L_1(0,T; L_\infty(\R^3)} \leq \|w\|_{L_1(0,T;\dot W^1_\infty(\R^3))} + \| v^\mtd\|_{L_1(0,T; \dot W^1_\infty(\R^2))}  \\
  \leq\, & T\|w\|_{L_\infty(0,T;L_2(\R^3))} + T^{1-\frac{1}{p}} \|w\|_{L_p(0,T; \dot W^2_p)} +T\|v^\mtd\|_{L_\infty(0,T;L_2(\R^2))} + T^{1-\frac{1}{p}} \|v^\mtd\|_{L_p(0,T; \dot W^2_p(\R^2))}  \\
  \leq \, & (1+T)C(h_0,w_0,v^\mtd_0),
\end{split}
\end{equation}
which combined with \eqref{eq:nhL3} yields the desired inequality \eqref{nh-es}.
\end{proof}

\subsection{Global existence and uniqueness}\label{subsec:exiR}

\smallskip

We divide the whole proof of Theorem \ref{thm:regu} into five steps.

\smallskip

{\bf Step 1: Approximate system and uniform estimates.}

Let $v^\mtd$ be the unique strong solution to the three-component 2D (HNS) system \eqref{HNS} associated with $v^\mtd_0\in L_2\cap \dot B^{3-2/p}_{p,p}(\R^2)$,
then it is smooth for all $t>0$ and satisfies the estimates stated in Lemmas \ref{lem:2DNS-L2} - \ref{lem:2DNS-2}.
We construct $(w^{n+1},h^{n+1})$ ($n\in\N$) as the solutions to the following approximate system
\begin{equation}\label{appPertS}
 \begin{cases}
  h_t^{n+1}+v^n \cdot \nabla h^{n+1} =0, \\
  w_t^{n+1} + v^n \cdot \nabla w^{n+1} -\Delta w^{n+1} +\nabla p^{n+1} = -h^n (w^{n+1}_t +v^{n-1}\cdot\nabla w^{n+1})\\
  \qquad\qquad \qquad\qquad\qquad\qquad\qquad\qquad\qquad -(1+h^n)(w^{n+1}\cdot\nabla v^\mtd) + f(h^n,v^\mtd), \\
  \divg w^{n+1}=0, \\
  h^{n+1}|_{t=0}=h_0,\quad w^{n+1}|_{t=0}=w_0,
 \end{cases}
\end{equation}
with $v^n=v^\mtd + w^n$ and
\begin{equation}\label{eq:fhv}
  f(h^n,v^\mtd)=-h^n (v^\mtd)_t -h^n (v^\mtd\cdot\nabla v^\mtd).
\end{equation}
We also set $v^{-1}(t,x)\equiv 0$, $w^0(t,x)\equiv w_0(x)$, $h^0(t,x)\equiv h_0(x)$, $v^0(t,x)=v_0(x)=w_0+v^\mtd_0$.
The solvability of system (\ref{appPertS}) follows from Lemma \ref{lem:StokesR} and the Banach fixed point theorem. We treat the nonlinearity $v^n \cdot \nabla w^{n+1}$ as
a perturbation and find a solution via a contraction map for small time intervals. Solvability of the transport equation follows directly from the method of characteristics.
We omit explanation of this part, since one can find there no obstacles.

First we have $(h^1,w^1)$ solves
\begin{equation}
 \begin{cases}
  h_t^1+v_0 \cdot \nabla h^1 =0, \\
  w_t^1 + v_0 \cdot \nabla w^1 -\Delta w^1 +\nabla p^1 = -h_0 w^1_t -(1+h_0)(w^1_\mh\cdot\nabla_\mh v^\mtd) -h_0 (v^\mtd)_t -h_0 (v^\mtd_\mh\cdot\nabla_\mh v^\mtd), \\
  \divg w^1=0, \\
  h^1|_{t=0}=h_0,\quad w^1|_{t=0}=w_0.
 \end{cases}
\end{equation}
We see that $\|h^1(t)\|_{L_2\cap L_\infty(\R^3)}= \|h_0\|_{L_2\cap L_\infty(\R^3)}$ for any $t>0$, and by arguing as \eqref{wes-key}, we find
\begin{equation*}{}
\begin{split}
  &\| w^1(t)\|_{L_2(\R^3)}^2 + \int_0^t \|\nabla  w^1(\tau)\|_{L_2(\R^3)}^2\,\dd\tau \\
  \leq\, &(1+\|h_0\|_{L_\infty})\| w_0\|_{L_2}^2 + \|h_0\|_{L_\infty}\|w^1(t)\|_{L_2}^2 +\int_0^t \left( \|\partial_\tau v^\mtd(\tau)\|_{L_\infty}
  +  \|\nabla_\mh v^\mtd(\tau)\|_{L_\infty}^2 \right) \| w^1(\tau)\|_{L_2}^2 \dd\tau \\
  & + C(1+ \|h_0\|_{L_\infty}) \int_0^t\|\nabla_\mh v^\mtd(\tau)\|_{L_2}^2 \| w^1(\tau)\|_{L_2}^2\dd\tau + \|h_0\|_{L_2}^2 \int_0^t \left(  \|\partial_\tau v^\mtd(\tau)\|_{L_\infty}
  +\|v^\mtd_\mh(\tau)\|_{L_\infty}^2\right) \dd \tau,
\end{split}
\end{equation*}
which under the condition $\|h_0\|_{L_\infty(\R^3)}\leq \frac{1}{2}$ leads to that $w^1\in L_\infty(0,\infty;L_2(\R^3))\cap L_2(0,\infty; \dot W^1_2(\R^3))$ satisfying
\begin{equation}\label{w1ene}
  \| w^1(t)\|_{L_2(\R^3)} + \left(\int_0^t \|\nabla  w^1(\tau)\|_{L_2(\R^3)}^2\,\dd\tau \right)^{1/2}\leq C \|( w_0,h_0)\|_{L_2(\R^3)} e^{B(v^\mtd_0)}, \quad \forall t>0.
\end{equation}
Taking advantage of \eqref{Lp-est2} in Lemma \ref{lem:StokesR}, we have
\begin{equation*}\label{}
\begin{split}
  \sup_{\tau\leq t} \|w^1(\tau)\|_{\dot B^{2-\frac{2}{p}}_{p,p}(\mathbb{R}^3)}^p + \|w^1_\tau,\nabla^2 w^1\|_{L_p(\R^3\times(0,t))}^p
  \leq \;  C \bigg( \sum_{i=1}^6\|G_i^1\|_{L_p(\mathbb{R}^3\times(0,t))}^p   +  \|w_0\|_{B^{2-2/p}_{p,p}(\mathbb{R}^3)}^p \bigg),
\end{split}
\end{equation*}
with
\begin{equation*}
\begin{split}
  & G^1_1:= v^\mtd\cdot\nabla w^1, \quad G^1_2:=w_0\cdot\nabla w^1,\quad G^1_3:=h_0\, (v^\mtd)_\tau, \\
  & G^1_4:= h_0\, w_\tau,\quad G^1_5:=(1+h_0)(w^1_\mh\cdot\nabla_\mh v^\mtd),\quad G^1_6:= h_0 (v^\mtd_\mh\cdot\nabla_\mh v^\mtd).
\end{split}
\end{equation*}
Similarly as estimating \eqref{XYwes}, and from the condition $\|h_0\|_{L_\infty(\R^3)}\leq \left( \frac{1}{2C}\right)^{1/p}$ and the following estimate (as the treating of $F_2$ in Proposition \ref{prop:apR2})
\begin{equation*}
\begin{split}
  & \|G^1_2\|_{L_p(\R^3\times (0,t))}^p \leq \|w_0\|_{L_\infty(\R^3)}^p \|\nabla w^1\|_{L_p(\R^3\times (0,t))}^p \\
  \leq \,&C \|w_0\|_{L_\infty}^p \big( \mathcal{X}_{w^1}(t) + \mathcal{Y}_{w^1}(t) \big)^{\frac{5p-10}{7p-10}}
  \left(\| w^1\|_{L_\infty(0,t; L_2)} + \|\nabla w^1\|_{L_2(0,t; L_2)} \right)^{\frac{2p^2}{7p-10}} \\
  \leq \,& C\|w_0\|_{L_\infty}^p \big( \mathcal{X}_{w^1}(t) + \mathcal{Y}_{w^1}(t) \big)^{\frac{5p-10}{7p-10}}
  \left( \|( w_0,h_0)\|_{L_2} e^{B(v^\mtd_0)} \right)^{\frac{2p^2}{7p-10}},
\end{split}
\end{equation*}
we obtain
\begin{align*}
  \mathcal{X}_{w^1}(t) + \mathcal{Y}_{w^1}(t) \leq
  & \, C\int_0^t \mathcal{X}_{w^1}(\tau) \left( \|v^\mtd(\tau)\|_{L_\infty(\R^2)}^p + \|\nabla_\mh v^\mtd(\tau)\|_{L_\infty(\R^2)}^p\right)\dd \tau + \\
  & \, + C\|w_0\|_{L_\infty}^p \big( \mathcal{X}_{w^1}(t) + \mathcal{Y}_{w^1}(t) \big)^{\frac{5p-10}{7p-10}}
  \left( \|( w_0,h_0)\|_{L_2} e^{B(v^\mtd_0)} \right)^{\frac{2p^2}{7p-10}} +\\
  & \,+ C \| w_0\|_{\dot B^{2-\frac{2}{p}}_{p,p}}^p +   C \left( \| w_0\|_{L_2}^p  + \|h_0\|_{L_2\cap L_\infty}^p \right) e^{2pB(v^\mtd_0)} \\
  \leq & \,C\int_0^t \mathcal{X}_{w^1}(\tau) \left( \|v^\mtd(\tau)\|_{L_\infty(\R^2)}^p + \|\nabla_\mh v^\mtd(\tau)\|_{L_\infty(\R^2)}^p\right)\dd \tau
  + \frac{1}{2} \big( \mathcal{X}_{w^1}(t) + \mathcal{Y}_{w^1}(t) \big)\\
  & \, + C \Big(1+\| w_0\|_{L_\infty}^{\frac{7p-10}{2}}\Big) \Big( \| w_0\|_{L_2\cap \dot B^{2-2/p}_{p,p}}^p
  + \|h_0\|_{L_2\cap L_\infty}^p \Big) e^{2pB(v^\mtd_0)},
\end{align*}
with $\mathcal{X}_{w^1}$ and $\mathcal{Y}_{w^1}$ given by \eqref{XtYt}.
Hence, by assuming $\|w_0\|_{L_\infty(\R^3)}\leq 1$, we use Gr\"onwall's inequality to deduce that
\begin{equation}\label{w1ape}
\begin{split}
  & \sup_{t<\infty} \|w^1(t)\|_{\dot B^{2-2/p}_{p,p}(\mathbb{R}^3)}^p + \|w^1_t,\nabla^2 w^1\|_{L_p(\R^3\times(0,\infty))}^p\\
  \leq\,& 2C \Big( \| w_0\|_{L_2\cap \dot B^{2- 2/p}_{p,p}(\R^3)}^p
  + \|h_0\|_{L_2\cap L_\infty(\R^3)}^p \Big) \exp\set{C'\Big( \|v^\mtd_0\|_{L_2\cap \dot B^{3-2/p}_{p,p}(\R^2)}^{4p}  + 1\Big) e^{C'(1+\|v^\mtd_0\|_{L_2}^4)}}.
\end{split}
\end{equation}

Now under the condition \eqref{wh-cd}, that is, there is an absolute small constant $c_*>0$ so that
\begin{equation}\label{wh-cd2}
  \left(\| w_0\|_{L_2\cap \dot B^{2-2/p}_{p,p}(\R^3)} + \|h_0\|_{L_2\cap L_\infty(\R^3)} \right)
  \exp\left\{C' \left(\|v^\mtd_0\|_{L_2\cap \dot B^{3-2/p}_{p,p}}^{4p} +1\right)e^{C'(1+\|v^\mtd_0\|_{L_2}^4)}\right\}\leq c_*,
\end{equation}
we suppose that for each $n\in\N_+$ and $k\leq n$ we have
\begin{equation}\label{wk-ene}
  \| w^k(t)\|_{L_2(\R^3)} + \left(\int_0^t \|\nabla  w^k(\tau)\|_{L_2(\R^3)}^2\,\dd\tau \right)^{1/2}\leq C \|( w_0,h_0)\|_{L_2(\R^3)} e^{B(v^\mtd_0)}, \quad \forall t>0,
\end{equation}
and
\begin{equation}\label{wk-ape}
\begin{split}
  & \sup_{t<\infty} \|w^k(t)\|_{\dot B^{2-\frac{2}{p}}_{p,p}(\mathbb{R}^3)}^p + \|w^k_t,\nabla^2 w^k\|_{L_p(\R^3\times(0,\infty))}^p\\
  \leq\,& 2C \Big( \| w_0\|_{L_2\cap \dot B^{2-\frac{2}{p}}_{p,p}(\R^3)}^p
  + \|h_0\|_{L_2\cap L_\infty(\R^3)}^p \Big) \exp\set{C'\left( \|v^\mtd_0\|_{L_2\cap \dot B^{3-2/p}_{p,p}(\R^2)}^{4p}  + 1\right)e^{C'(1+\|v^\mtd_0\|_{L_2}^4)}},
\end{split}
\end{equation}
which in terms of the notation \eqref{XtYt} means that
\begin{equation}\label{wk-ape2}
  \mathcal{X}_{w^k}(t) + \mathcal{Y}_{w^k}(t) \leq 2 C c_*^p,\quad \forall t>0.
\end{equation}
We intend to derive the similar uniform estimates for $w^{n+1}$.
Since $v^n=w^n+v^\mtd$, by arguing as \eqref{vL1Lip}, we have $v^n\in L_1(0,t; \dot W^1_\infty(\R^3))$ for $t>0$ arbitrary. Thus the flow property of transport equation guarantees that
$\|h^{n+1}(t)\|_{L_2\cap L_\infty(\R^3)}=\|h_0\|_{L_2\cap L_\infty(\R^3)}$ for any $t>0$. For the system \eqref{appPertS}, in a similar way as obtaining \eqref{wes-key}, and according to the following identity formula
\begin{equation*}
\begin{split}
  & -2\int_0^t\int_{\R^3}h^n ( w^{n+1})_\tau \, \cdot w^{n+1}\dd x\mathrm{d}\tau -2 \int_0^t\int_{\R^3} h^n (v^{n-1}\cdot\nabla) w^{n+1}\, \cdot w^{n+1}\dd x\mathrm{d}\tau\\
  & =\int_{\R^3}h_0(x) |w_0(x)|^2 \dd x - \int_{\R^3} h(t,x) |w(t,x)|^2\dd x,
\end{split}
\end{equation*}
we get
\begin{align*}
  &\| w^{n+1}(t)\|_{L_2(\R^3)}^2 + \int_0^t \|\nabla  w^{n+1}(\tau)\|_{L_2(\R^3)}^2\,\dd\tau \\
  \leq\, &(1+\|h_0\|_{L_\infty})\| w_0\|_{L_2}^2 + \|h_0\|_{L_\infty}\|w(t)\|_{L_2}^2 +\int_0^t \left( \|\partial_\tau v^\mtd(\tau)\|_{L_\infty}
  +  \|\nabla_\mh v^\mtd(\tau)\|_{L_\infty}^2 \right) \| w^{n+1}(\tau)\|_{L_2}^2 \dd\tau \\
  & + C(1+ \|h_0\|_{L_\infty}) \int_0^t\|\nabla_\mh v^\mtd(\tau)\|_{L_2(\R^2)}^2 \| w^{n+1}(\tau)\|_{L_2}^2\dd\tau + \|h_0\|_{L_2}^2 \int_0^t \left(  \|\partial_\tau v^\mtd\|_{L_\infty}
  +\|v^\mtd_\mh\|_{L_\infty}^2\right) \dd \tau,
\end{align*}
which ensures that \eqref{wk-ene} holds with $k=n+1$ by using Gr\"onwall's inequality. Next we apply Lemma \ref{lem:StokesR} to the second equation of \eqref{appPertS} to see that
\begin{equation*}
\begin{split}
  \sup_{\tau\leq t} \|w^{n+1}\|_{\dot B^{2-\frac{2}{p}}_{p,p}}^p + \|w^{n+1}_\tau,\nabla^2 w^{n+1}\|_{L_p(\R^3\times(0,t))}^p
  \leq \;  C \bigg( \sum_{i=1}^6\|G^{n+1}_i\|_{L_p(\mathbb{R}^3\times(0,t))}^p   +  \|w_0\|_{B^{2-2/p}_{p,p}}^p \bigg),
\end{split}
\end{equation*}
with
\begin{equation*}
\begin{split}
  & G^{n+1}_1:=(1+h^n)(v^\mtd\cdot\nabla w^{n+1}), \quad G^{n+1}_2:=(w^n + h^n w^{n-1})\cdot\nabla w^{n+1},\quad G^{n+1}_3:=h^n\, (v^\mtd)_\tau, \\
  & G^{n+1}_4:= h^n\, w^{n+1}_\tau,\quad G^{n+1}_5:=(1+h^n)(w^{n+1}_\mh\cdot\nabla_\mh v^\mtd),\quad G^{n+1}_6:=h^n (v^\mtd_\mh\cdot\nabla_\mh v^\mtd).
\end{split}
\end{equation*}
By estimating as \eqref{XYwes}, and noting that (similar to the treating of $F_2$ in Proposition \ref{prop:apR2} and using \eqref{wk-ene} for $k=n-1,n,n+1$)
\begin{equation*}
\begin{split}
  & \|G^{n+1}_2\|_{L_p(\R^3\times (0,t))}^p \leq \left(\|w^n\|_{L_\infty(\R^3\times (0,t))} + \|h_0\|_{L_\infty}\|w^{n-1}\|_{L_\infty(\R^3\times (0,t))}\right) \|\nabla w^{n+1}\|_{L_p(\R^3\times (0,t))}^p \\
  \leq \,&C \Big(\sum_{k=n-1,n} \mathcal{X}_{w^k}(t) + \mathcal{Y}_{w^k}(t)\Big)^{\frac{3p}{7p-10}}\big( \mathcal{X}_{w^{n+1}}(t) + \mathcal{Y}_{w^{n+1}}(t) \big)^{\frac{5p-10}{7p-10}}
  \left( \|( w_0,h_0)\|_{L_2} e^{B(v^\mtd_0)} \right)^{\frac{p(6p-10)}{7p-10}},
\end{split}
\end{equation*}
we infer that by Young's inequality,
\begin{equation*}\label{}
\begin{split}
  & \mathcal{X}_{w^{n+1}}(t) + \mathcal{Y}_{w^{n+1}}(t) \\
  \leq &\, C\int_0^t \mathcal{X}_{w^{n+1}}(\tau) \left( \|v^\mtd(\tau)\|_{L_\infty(\R^2)}^p + \|\nabla_\mh v^\mtd(\tau)\|_{L_\infty(\R^2)}^p\right)\dd \tau + \\
  & \, + C \Big(\sum_{k=n-1,n} \mathcal{X}_{w^k}(t) + \mathcal{Y}_{w^k}(t)\Big)^{\frac{3p}{7p-10}}\big( \mathcal{X}_{w^{n+1}}(t) + \mathcal{Y}_{w^{n+1}}(t) \big)^{\frac{5p-10}{7p-10}}
  \left( \|( w_0,h_0)\|_{L_2} e^{B(v^\mtd_0)} \right)^{\frac{p(6p-10)}{7p-10}} +\\
  & \,+ C \| w_0\|_{\dot B^{2-\frac{2}{p}}_{p,p}}^p +   C \left( \| w_0\|_{L_2}^p  + \|h_0\|_{L_2\cap L_\infty}^p \right) e^{2pB(v^\mtd_0)} \\
  \leq &\, C\int_0^t \mathcal{X}_{w^{n+1}}(\tau) \left( \|v^\mtd(\tau)\|_{L_\infty(\R^2)}^p + \|\nabla v^\mtd(\tau)\|_{L_\infty(\R^2)}^p\right)\dd \tau + \frac{1}{3}\big( \mathcal{X}_{w^{n+1}}(t) + \mathcal{Y}_{w^{n+1}}(t) \big) \\
  & \, + C \Big(\sum_{k=n-1,n} \mathcal{X}_{w^k}(t) + \mathcal{Y}_{w^k}(t)\Big)^{\frac{3}{2}}  \left( \|( w_0,h_0)\|_{L_2} e^{B(v^\mtd_0)} \right)^{3p-5} + C \Big( \| w_0\|_{L_2\cap \dot B^{2-\frac{2}{p}}_{p,p}}^p  + \|h_0\|_{L_2\cap L_\infty}^p \Big) e^{2pB(v^\mtd_0)},
\end{split}
\end{equation*}
with $\mathcal{X}_{w^{n+1}}$ and $\mathcal{Y}_{w^{n+1}}$ given by \eqref{XtYt}.
In view of Gr\"onwall's inequality and the assumptions \eqref{wh-cd2}, \eqref{wk-ape}-\eqref{wk-ape2}, by letting $c_*$ be suitably small so that $(4C)^{3/2} c_*^{7p/2-5}\leq \frac{1}{3}$, we deduce
\begin{equation*}
\begin{split}
  & \mathcal{X}_{w^{n+1}}(t) + \mathcal{Y}_{w^{n+1}}(t) \\
  \leq & \,  \frac{3 C}{2} \Big(\sum_{k=n-1}^n \mathcal{X}_{w^k}(t) + \mathcal{Y}_{w^k}(t)\Big)^{\frac{3}{2}}  \|( w_0,h_0)\|_{L_2}^{3p-5} e^{(3p-5)B_1(v^\mtd_0)}  \\
  & \,+ \frac{3 C}{2}  \Big( \| w_0\|_{L_2\cap \dot B^{2-2/p}_{p,p}}^p  + \|h_0\|_{L_2\cap L_\infty}^p \Big) e^{pB_1(v^\mtd_0)} \\
  \leq & \,\frac{3 C}{2} \Big( \| w_0\|_{L_2\cap \dot B^{2-\frac{2}{p}}_{p,p}}^p + \|h_0\|_{L_2\cap L_\infty}^p \Big) e^{pB_1(v^\mtd_0)}\left( (4C)^{3/2} c_*^{7p/2-5}+1\right) \\
  \leq &\,2C \left( \| w_0\|_{L_2\cap \dot B^{2-2/p}_{p,p}}^p
  + \|h_0\|_{L_2\cap L_\infty}^p \right) \exp\set{C'\left( \|v^\mtd_0\|_{L_2\cap \dot B^{3-2/p}_{p,p}(\R^2)}^{2p}  + 1\right)e^{C'(1+\|v^\mtd_0\|_{L_2}^4)}} .
\end{split}
\end{equation*}

Therefore, the induction method guarantees that the uniform estimates of \eqref{wk-ene} and \eqref{wk-ape} indeed hold for every $k\in\N$ under the smallness condition \eqref{wh-cd2}.
Moreover, for any $T>0$, by arguing as Proposition \ref{prop:apR3} and using the uniform estimates \eqref{wk-ene} and \eqref{wk-ape}, we deduce that
\begin{equation}\label{nhn-es}
  \sup_{t\in [0,T]} \|\nabla h^{n+1}(t)\|_{L_3(\mathbb{R}^3)} \leq C \|\nabla h_0\|_{L_3(\R^3)} e^{(1+ T)C(h_0,w_0,v^\mtd_0)},\quad \forall n\in\N.
\end{equation}

\smallskip

{\bf Step 2: $L_2$-contraction of $\{(h^n,w^n)\}_{n\in\N}$} on a small interval $[0,T_0]$.

Now based on the uniform estimates \eqref{wk-ene}, \eqref{wk-ape} and \eqref{nhn-es}, we show that $\{(h^n,w^n)\}_{n\in\N}$ is a Cauchy sequence in the $L_2$-energy space on a small interval
$[0,T_0]$ with $T_0>0$.
Denoting by
\begin{equation}
  \delta h^n= h^n-h^{n-1},\quad \delta w^n = w^n -w^{n-1},\quad \delta p^n= p^n-p^{n-1},\quad n\in \N,
\end{equation}
with the convention $h^{-1}=p^{-1}=0$ and $w^{-1}=v^{-1}=0$, from \eqref{appPertS}, we write the equations of $(\delta h^{n+1},\delta w^{n+1})$ as
\begin{equation}\label{PertSdif}
 \begin{cases}
  (\delta h^{n+1})_t+v^n \cdot\nabla \delta h^{n+1} = -\delta w^n\cdot \nabla h^n, \\
  (\delta w^{n+1})_t + v^n \cdot \nabla \delta w^{n+1} -\Delta \delta w^{n+1} +\nabla \delta p^{n+1} = H, \\
  \divg \delta w^{n+1}=0, \\
  \delta h^{n+1}|_{t=0}=0,\quad \delta w^{n+1}|_{t=0}=0,
 \end{cases}
\end{equation}
where $H=\sum_{i=1}^{10}H_i$ with
\begin{equation*}\label{}
\begin{split}
  & H_1:=-\delta w^n\cdot\nabla w^n, \;\; H_2:=-\delta h^n \,w^{n+1}_t,\;\; H_3:=-h^{n-1}\, (\delta w^{n+1})_t,
  \;\, H_4:= -\delta h^n (v^{n-1}\cdot\nabla w^{n+1}), \\
  & H_5:=-h^{n-1}\,(\delta w^{n-1}\cdot\nabla w^{n+1}),\quad H_6:=-h^{n-1} (v^{n-2} \cdot\nabla \delta w^{n+1}),
  \quad H_7:=-\delta h^n ( w^{n+1}_\mh\cdot\nabla_\mh v^\mtd), \\
  & H_8:=-(1+h^{n-1})\delta w^{n+1}_\mh\cdot\nabla_\mh v^\mtd,\quad H_9:= -\delta h^n (v^\mtd_\mh\cdot\nabla_\mh v^\mtd), \quad H_{10}:=-\delta h^n (v^\mtd)_t.
\end{split}
\end{equation*}
First we consider the $L_2$-estimate of $\delta h^{n+1}$. Multiplying both sides of the first equation of \eqref{PertSdif} with $\delta h^{n+1}$ and integrating over the spatial variables, we get
\begin{equation*}
  \frac{1}{2}\frac{\dd}{\dd t} \|\delta h^{n+1}(t)\|_{L_2}^2 \leq \int_{\R^3} |\delta w^n ||\nabla h^n||\delta h^{n+1}(t,x)|\dd x .
\end{equation*}
H\"older's inequality leads to
\begin{equation*}
 \frac{\dd}{\dd t} \|\delta h^{n+1}(t)\|_{L_2} \leq \|\delta w^n(t)\|_{L_6} \|\nabla h^n(t)\|_{L_3}.
\end{equation*}
By virtue of integration on time over $[0,t]$ and H\"older's inequality, we obtain
\begin{equation}\label{dhnL2}
  \|\delta h^{n+1}(t)\|_{L_2}\leq \|\nabla h^n\|_{L_\infty(0,t;L_3)} t^{1/2}\left(\int_0^t\int_{\R^3} |\nabla \delta w^n|^2\dd x \dd \tau\right)^{1/2}.
\end{equation}
Next we turn to the $L_2$-estimate of $\delta w^{n+1}$. Through taking the inner product of the second equation of \eqref{PertSdif} with $\delta w^{n+1}$, we have
\begin{equation}\label{dwn-es1}
  \frac{1}{2}\frac{\dd}{\dd t}\|\delta w^{n+1}(t)\|_{L_2}^2 + \|\nabla \delta w^{n+1}(t)\|_{L_2}^2 = \sum_{i=1}^{10}\int_{\R^3} H_i \, \delta w^{n+1}(t,x)\dd x .
\end{equation}
Thanks to H\"older's inequality, the interpolation inequality and Young's inequality, we respectively estimate the terms on the right-hand side of \eqref{dwn-es1} (except the terms containing $H_3$, $H_6$) as
\begin{equation*}
  \int_{\R^3} |H_1| \,|\delta w^{n+1}| \dd x\leq \|\delta w^n\|_{L_2} \|\nabla w^n\|_{L_\infty} \|\delta w^{n+1}\|_{L_2}
  \leq \frac{1}{2} \|\nabla w^n\|_{L_\infty}^2 \|\delta w^n\|_{L_2}^2 +\frac{1}{2}\|\delta w^{n+1}\|_{L_2}^2,
\end{equation*}
and
\begin{align*}
  \int_{\R^3} |H_2| \,|\delta w^{n+1}| \dd x & \leq \|\delta h^n\|_{L_2} \| w^{n+1}_t\|_{L_p} \|\delta w^{n+1}\|_{L_{\frac{2p}{p-2}}} \\
  & \leq C \|\delta h^n\|_{L_2} \| w^{n+1}_t\|_{L_p}  \|\delta w^{n+1}\|_{L_2}^{\frac{p-3}{p}} \|\nabla\delta w^{n+1}\|_{L_2}^{\frac{3}{p}} \\
  & \leq C \|\delta h^n\|_{L_2}^{\frac{2p}{2p-3}} \|w^{n+1}_t\|_{L_p}^{\frac{2p}{2p-3}} \|\delta w^{n+1}\|_{L_2}^{\frac{2p-6}{2p-3}}+ \frac{1}{2}\|\nabla\delta w^{n+1}\|_{L_2}^2 \\
  & \leq C \|w^{n+1}_t\|_{L_p}^2\|\delta h^n\|_{L_2}^2  + \frac{1}{2} \|\delta w^{n+1}\|_{L_2}^2 + \frac{1}{2}\|\nabla\delta w^{n+1}\|_{L_2}^2,
\end{align*}
and
\begin{equation*}
\begin{split}
  \int_{\R^3} |H_4|\, |\delta w^{n+1}|\dd x  &\,\leq \|\delta h^n\|_{L_2} \|v^{n-1}\|_{L_\infty} \|\nabla w^{n+1}\|_{L_\infty} \|\delta w^{n+1}\|_{L_2} \\
  &\,\leq \frac{1}{2} \|v^{n-1}\|_{L_\infty}^2 \|\nabla w^{n+1}\|_{L_\infty}^2 \|\delta h^n\|_{L_2}^2 +  \frac{1}{2}\|\delta w^{n+1}\|_{L_2}^2,
\end{split}
\end{equation*}
and
\begin{equation*}
\begin{split}
  \int_{\R^3} |H_5|\, |\delta w^{n+1}|\dd x  & \leq \|h^{n-1}\|_{L_\infty} \|\delta w^{n-1}\|_{L_2} \|\nabla w^{n+1}\|_{L_\infty} \|\delta w^{n+1}\|_{L_2} \\
  & \leq \frac{1}{2}\|h_0\|_{L_\infty}^2  \|\nabla w^{n+1}\|_{L_\infty}^2 \|\delta w^{n-1}\|_{L_2}^2 + \frac{1}{2} \|\delta w^{n+1}\|_{L_2}^2,
\end{split}
\end{equation*}
and
\begin{equation*}
\begin{split}
  \int_{\R^3} |H_7|\, |\delta w^{n+1}|\dd x  & \leq \|\delta h^n\|_{L_2} \|w^{n+1}\|_{L_\infty} \|\nabla_\mh v^\mtd\|_{L_\infty} \|\delta w^{n+1}\|_{L_2} \\
  & \leq \frac{1}{2}\|w^{n+1}\|_{L_\infty}^2 \|\nabla_\mh v^\mtd\|_{L_\infty}^2 \|\delta h^n\|_{L_2}^2 +\frac{1}{2} \|\delta w^{n+1}\|_{L_2}^2,
\end{split}
\end{equation*}
and
\begin{equation*}
  \int_{\R^3} |H_8|\, |\delta w^{n+1}|\dd x  \leq (1+\| h^{n-1}\|_{L_\infty}) \|\nabla_\mh v^\mtd\|_{L_\infty} \|\delta w^{n+1}\|_{L_2}^2 ,
\end{equation*}
and
\begin{equation*}
\begin{split}
  \int_{\R^3} |H_9|\, |\delta w^{n+1}|\dd x  & \leq  \|\delta h^n\|_{L_2} \|v^\mtd_\mh\|_{L_\infty}\|\nabla_\mh v^\mtd\|_{L_\infty} \|\delta w^{n+1}\|_{L_2} \\
  & \leq \frac{1}{2}\|v^\mtd_\mh\|_{L_\infty}^2 \|\nabla_\mh v^\mtd\|_{L_\infty}^2 \|\delta h^n\|_{L_2}^2 +\frac{1}{2} \|\delta w^{n+1}\|_{L_2}^2  ,
\end{split}
\end{equation*}
and
\begin{equation*}
  \int_{\R^3} |H_{10}|\, |\delta w^{n+1}|\dd x  \leq \|\delta h^n\|_{L_2} \|(v^\mtd)_t\|_{L_\infty} \|\delta w^{n+1}\|_{L_2}\leq \frac{1}{2} \|(v^\mtd)_t\|_{L_\infty}^2 \|\delta h^n\|_{L_2}^2 +\frac{1}{2}\|\delta w^{n+1}\|_{L_2}^2.
\end{equation*}
Integrating on the time variable and using the following formula (concerning terms $H_3$ and $H_6$)
\begin{equation*}
\begin{split}
  & -\int_0^t\int_{\R^3}h^{n-1} (\delta w^{n+1})_\tau \, \cdot\delta w^{n+1}\dd x\mathrm{d}\tau -\int_0^t\int_{\R^3} h^{n-1} (v^{n-2}\cdot\nabla) \delta w^{n+1}\, \cdot \delta w^{n+1}\dd x\mathrm{d}\tau\\
  & =- \frac{1}{2}\int_{\R^3} h^{n-1}(t,x)\, |\delta w^{n+1}|^2(t,x)\dd x,
\end{split}
\end{equation*}
we gather the above estimates to obtain that
\begin{align}\label{dwnL2}
  & \|\delta w^{n+1}(t)\|_{L_2}^2 + \int_0^t \int_{\R^3} |\nabla \delta w^{n+1}(x,\tau)|^2\dd x\dd \tau \nonumber \\
  \leq & \|h_0\|_{L_\infty} \|\delta w^{n+1}(t)\|_{L_2}^2 +\left(7 + 2(1+ \|h_0\|_{L_\infty})\|\nabla_\mh v^\mtd\|_{L_\infty(\Omega_t)} \right)\, t\,\|\delta w^{n+1}\|_{L_\infty(0,t; L_2)}^2 \nonumber \\
  &+ t^{\frac{p-2}{p}}\|\nabla w^n\|_{L_p(0,t;L_\infty)}^2 \|\delta w^n\|_{L_\infty(0,t;L_2)}^2 + t^{\frac{p-2}{p}} \|h_0\|_{L_\infty}^2  \|\nabla w^{n+1}\|_{L_p(0,t;L_\infty)}^2 \|\delta w^{n-1}\|_{L_\infty(0,t;L_2)}^2 \\
  & + t^{\frac{p-2}{p}} \|v^{n-1}\|_{L_\infty(\Omega_t)}^2 \|\nabla w^{n+1}\|_{L_p(0,t;L_\infty)}^2 \|\delta h^n\|_{L_\infty(0,t;L_2)}^2
  +C t^{\frac{p-2}{p}}\|w^{n+1}_t\|_{L_p(\Omega_t)}^2 \|\delta h^n\|_{L_\infty(0,t;L_2)}^2 \nonumber\\
  & + t \left(\|w^{n+1}\|_{L_\infty(\Omega_t)}^2 \|\nabla_\mh v^\mtd\|_{L_\infty(\Omega_t)}^2 + \|v^\mtd_\mh\|_{L_\infty(\Omega_t)}^2 \|\nabla_\mh v^\mtd\|_{L_\infty(\Omega_t)}^2
  + \|(v^\mtd)_t\|_{L_\infty(\Omega_t)}^2\right)\|\delta h^n\|_{L_\infty(0,t;L_2)}^2, \nonumber
\end{align}
with $\Omega_t:= \R^3\times(0,t)$. Denoting by
\begin{equation}
  I_n(t):= \sup_{\tau\in[0,t]}\|\delta h^n(\tau)\|_{L_2}^2 + \sup_{\tau\in[0,t]}\|\delta w^n(\tau)\|_{L_2}^2
  + \int_0^t\|\nabla \delta w^n(\tau)\|_{L_2}^2\dd \tau,
\end{equation}
and relied on \eqref{dhnL2}, \eqref{dwnL2} and the smallness condition \eqref{wh-cd2}, Lemmas \ref{lem:2DNS-L2} - \ref{lem:2DNS-2}
and the uniform estimates \eqref{wk-ene}, \eqref{wk-ape}, \eqref{nhn-es},
we can let $t$ be small enough, i.e. $t\leq T_0$ with $T_0$ depending only on $p$, $\| w_0\|_{L_2\cap \dot B^{2-2/p}_{p,p}(\R^3)}$, $\|h_0\|_{L_2\cap L_\infty\cap \dot W^1_3(\R^3)}$ and $\|v^\mtd_0\|_{L_2\cap\dot B^{3-2/p}_{p,p}(\R^2)}$, such that
\begin{equation}
  I_{n+1}(T_0)\leq \frac{1}{4} I_n(T_0) +\frac{1}{8} I_{n-1}(T_0),
\end{equation}
which implies that
\begin{equation*}
  I_{n+1}(T_0)+\frac{1}{4}I_n(T_0)\leq \frac{1}{2}\Big(I_n(T_0) + \frac 14 I_{n-1}(T_0)\Big)\leq \cdots \leq \frac{1}{2^n}\Big(I_1(T_0) + \frac 14 I_0(T_0)\Big)\leq \frac{C_0}{2^n}.
\end{equation*}
Hence we conclude that $\{h^n,w^n\}_{n\in\N}$ is a Cauchy sequence in the space $L_\infty(0,T_0;L_2(\R^3))$.

\smallskip

{\bf Step 3: Strong Convergence.}

According to Step 2, there exists some functions $h\in L_\infty(0,T_0;L_2(\R^3))$ and $w\in L_\infty(0,T_0;L_2(\R^3))\cap L_2(0,T_0; \dot W^1_2(\R^3))$
such that $h^n\rightarrow h$ and $w^n\rightarrow w$ in $L_\infty(0,T_0;L_2(\R^3))$. By virtue of the uniform estimate \eqref{wk-ape}, \eqref{nhn-es} and the interpolation inequality,
we further get the strong convergence  $w^n\rightarrow w$ in $L_\infty(0,T_0; L_2\cap L_\infty\cap \dot B^{s_1}_{p,p}(\R^3))$ and $h^n\rightarrow h$ in $L_\infty(0,T_0; L_2\cap \dot W^{s_2}_3)$ with $s_1<2-2/p$ and $s_2<1$ arbitrary.
Therefore, we can pass the limit $n\rightarrow \infty$ in the system \eqref{appPertS} to deduce that $(h,w)$ is a distributional solution of the perturbed system \eqref{pertS1}-\eqref{pertS2}.
The limits of nonlinearities, since they are quasilinear, are well defined.
It is also guaranteed that $h\in L_\infty(0,T_0; \dot W^1_3(\R^3))$ and $w\in L_\infty(0,T_0;\dot B^{2-2/p}_{p,p}(\R^3))$, $w_t\in L_p(\Omega_{T_0})$, $\nabla^2 w\in L_p(\Omega_{T_0})$.

\smallskip

{\bf Step 4: Uniqueness.}

For any $T>0$, let $h_i\in L_\infty(0,T; L_2\cap L_\infty\cap \dot W^1_3)$, $w_i\in L_\infty(0,T; L_2\cap \dot B^{2-2/p}_{p,p})\cap \dot W^1_p(0,T; L_p)\cap L_p(0,T; \dot W^2_p)$,
$i=1,2$ be two solutions of the perturbed system \eqref{pertS1}-\eqref{pertS2} associated with the same initial data $(h_0,w_0)$,
which additionally satisfy that $\|h_i\|_{L_\infty(\Omega_T)}\leq \frac{1}{2}$. Denoting by
\begin{equation*}
  \delta h := h_1-h_2,\quad \delta w := w_1 -w_2, \quad \delta p:= p_1 - p_2,
\end{equation*}
and $v_i(t,x):= v^\mtd(t,x_\mh)+ w_i(t,x)$ $(i=1,2)$,
we obtain the equations of $(\delta h,\delta w)$ as follows
\begin{equation}\label{PertSuni}
 \begin{cases}
  (\delta h)_t+v_2 \cdot\nabla \delta h = -\delta w \cdot \nabla h_1, \\
  (1+h_2)(\delta w)_t + (1+h_2) (v_2 \cdot \nabla \delta w) -\Delta \delta w +\nabla \delta p = K, \\
  \divg \delta w=0, \\
  \delta h|_{t=0}=0,\quad \delta w|_{t=0}=0,
 \end{cases}
\end{equation}
where $K=\sum_{j=1}^7 K_j$ with
\begin{equation*}
\begin{split}
  & K_1:=-(1+h_2)\delta w \cdot\nabla w_1, \quad K_2:=-\delta h \,(w_1)_t,\quad K_3:= -\delta h (v_1\cdot\nabla w_1), \quad K_4:=-\delta h ( w_{1,\mh}\cdot\nabla_\mh v^\mtd), \\
  & K_5:=-(1+h_2)\delta w_\mh \cdot\nabla_\mh v^\mtd,\quad K_6:=-\delta h\, (v^\mtd)_t,\quad K_7:=-\delta h (v^\mtd_\mh \cdot\nabla_\mh v^\mtd).
\end{split}
\end{equation*}
In a similar way as the deduction in Step 2, we get that
\begin{equation*}
  \|\delta h(t)\|_{L_2}\leq \|\nabla h_1\|_{L_\infty(0,t;L_3)} t^{1/2}\left(\int_0^t\int_{\R^3} |\nabla \delta w|^2\dd x \dd \tau\right)^{1/2},
\end{equation*}
and
\begin{align*}
  & \|\delta w(t)\|_{L_2}^2 + \int_0^t \int_{\R^3} |\nabla \delta w(x,\tau)|^2\dd x\dd \tau \\
  \leq & \|h_2(t)\|_{L_\infty} \|\delta w(t)\|_{L_2}^2 +\left(7 + 2(1+ \|h_2\|_{L_\infty(\Omega_t)})\|\nabla_\mh v^\mtd\|_{L_\infty(\Omega_t)} \right)\, t\,\|\delta w\|_{L_\infty(0,t; L_2)}^2 \\
  &+ t^{\frac{p-2}{p}} \left(1+ \|h_2\|_{L_\infty(\Omega_t)}\right)\|\nabla w_1\|_{L_p(0,t;L_\infty)}^2 \|\delta w\|_{L_\infty(0,t;L_2)}^2  \\
  & + t^{\frac{p-2}{p}} \left(\|v_1\|_{L_\infty(\Omega_t)}^2 \|\nabla w_1\|_{L_p(0,t;L_\infty)}^2+\|(w_1)_t\|_{L_p(\Omega_t)}^2  \right) \|\delta h\|_{L_\infty(0,t;L_2)}^2 \\
  & + t \left(\|w_1\|_{L_\infty(\Omega_t)}^2 \|\nabla_\mh v^\mtd\|_{L_\infty(\Omega_t)}^2 + \|v^\mtd_\mh\|_{L_\infty(\Omega_t)}^2 \|\nabla_\mh v^\mtd\|_{L_\infty(\Omega_t)}^2 + \|(v^\mtd)_t\|_{L_\infty(\Omega_t)}^2\right)\|\delta h\|_{L_\infty(0,t;L_2)}^2.
\end{align*}
From the condition $\|h_2\|_{L_\infty(\Omega_T)}\leq \frac{1}{2}$, and by letting $t$ be small enough, i.e. for $t\leq T_1$ with $T_1$ depending only on $p$, $\| w_i\|_{L_\infty(0,T;L_2\cap \dot B^{2-2/p}_{p,p})}$, $\|( w_i)_t,\nabla^2 w_i\|_{L_p(\Omega_T)}$, $\|h_i\|_{L_\infty(0,T;L_2\cap L_\infty\cap \dot W^1_3)}$ and $\|v^\mtd_0\|_{L_2\cap\dot B^{3-2/p}_{p,p}(\R^2)}$, we find
\begin{equation}
  \|\delta h\|_{L_\infty(0,T_1; L_2(\R^3))}^2 + \|\delta w\|_{L_\infty(0,T_1; L_2(\R^3))}^2 + \int_0^{T_1}\int_{\R^3} |\nabla \delta w(t)|^2 \dd x\dd t \leq 0,
\end{equation}
which implies $(\delta h,\delta w)\equiv 0$ on $\R^3\times [0,T_1]$. By repeating the above process on the time interval $[T_1,2T_1]$, $[2T_1,3T_1]\,\cdots$, we conclude that
$(\delta h,\delta w)\equiv 0$ on all the spacetime domain $\R^3\times [0,T]$, which implies the uniqueness.

\smallskip

{\bf Step 5: the maximal time $T_*$ can equal $\infty$.}

Now we consider the time interval $[0,\infty)$ instead of $[0,T]$. We let $h\in L_\infty([0,T_*); L_2\cap\dot W^1_3(\R^3))$ and $w\in L_\infty([0,T_*); L_2\cap \dot B^{2-2/p}_{p,p})\cap \dot W^1_p([0,T_*); L_p)\cap L_p([0,T_*);\dot W^2_p)$
be the maximal lifespan solution of the perturbed system \eqref{pertS1}-\eqref{pertS2} constructed as above.
Suppose that $T_*<\infty$, we intend to show a contradiction.

Since $(h,w)$ is now regular enough on $(0,T_*)$ to satisfy the assumptions of Propositions
\ref{prop:eneR2}--\ref{prop:apR3}, we infer that
\begin{equation}
  \sup_{t\in[0,T_*)} \|h(t)\|_{L_2\cap L_\infty(\R^3)} + \sup_{t\in [0,T_*)} \|w(t)\|_{L_2\cap \dot B^{2-2/p}_{p,p}(\mathbb{R}^3)} \leq C \bar{c}_*,
\end{equation}
and
\begin{equation}
  \sup_{t\in [0,T_*)} \|\nabla h(t)\|_{L_3(\mathbb{R}^3)} \leq  \|\nabla h_0\|_{L_3(\R^3)} e^{(1+ T_*)C(h_0,w_0,v^\mtd_0)},
\end{equation}
where $\bar{c}_*$ is the absolute constant in \eqref{wh-cd} and $C>0$ is a generic constant depending only on $p$.
Hence if we assume
$$\bar{c}_*<\frac{c_*}{C}\exp\left\{-C' \left(\|v^\mtd_0\|_{L_2\cap \dot B^{2-2/p}_{p,p}(\R^2)}^{4p} +1\right)e^{C'(1+\|v^\mtd_0\|_{L_2}^4)}\right\},$$
with $c_*$ is the constant in \eqref{wh-cd2}, we have
\begin{equation}\label{wh-cd3}
  \left(\| w\|_{L_\infty([0,T_*);L_2\cap \dot B^{2-2/p}_{p,p})} + \|h\|_{L_\infty([0,T_*);L_2\cap L_\infty)} \right)
  \exp\left\{C' \left(\|v^\mtd_0\|_{L_2\cap \dot B^{2-2/p}_{p,p}}^{7p} + 1\right)e^{C'(1+\|v^\mtd_0\|_{L_2}^4)}\right\}\leq c_*,
\end{equation}
and we can repeat the above process in Step 1-3 from some time $t_0<T_*$ that can be arbitrarily close to $T_*$. Since the proceeding time $T_0$ is depending only on
$p$, $T_*$, $\| w\|_{L_\infty(0,T_*;L_2\cap \dot B^{2-2/p}_{p,p})} $,
$\|h\|_{L_\infty(0,T_*;L_2\cap L_\infty\cap \dot W^1_3)}$ and $\|v^\mtd_0\|_{L_2\cap\dot B^{3-2/p}_{p,p}(\R^2)}$, which in turn implies that $T_0$ is depending only on $p$, $T_*$ and the initial data $(h_0,w_0,v^\mtd_0)$,
we conclude that the maximal time $T_*$ can be proceeded beyond and this is a contradiction. Hence we obtain $T_*=\infty$. Then Theorem \ref{thm:regu} is proved.
\smallskip

\section{Proof of Theorem \ref{thm:rough}: global stability result with rough density}\label{sec:thm2}

The proof of Theorem \ref{thm:rough} is split into two parts: existence and uniqueness. We follow the following ideas.
Having already proved Theorem \ref{thm:regu}, we are able to find a class of suitable approximation solutions with regular density. Then since the a priori estimates (\ref{t2}) and (\ref{t3}) do not contain any dependence of $\epsilon$, we are able to find a suitable
sequence tending to a rough solution to the system (INS) with rough density. However this approach does not provide the uniqueness of solutions. To perform the uniqueness issue,
we apply the method from \cite{DanM12,DanM13} to use the approach via the Lagrangian coordinates setting, which is also the main part of the proof.

\subsection{Existence}

Take $\epsilon >0$ and define $\rho_0^\epsilon := \pi_\epsilon \ast \rho_0$, where $\pi_\epsilon$ stands for the standard mollifier. Then we are ensured that $\nabla \rho_0^\epsilon \in L_3(\R^3)$.
Next we consider the (INS) system (\ref{INS}) for $(\rho^\epsilon,v^\epsilon)$ with initial data $v^\epsilon|_{t=0}=v_0$ and $\rho^\epsilon|_{t=0}=\rho^\epsilon_0$.
Theorem \ref{thm:regu} implies that under the uniform-in-$\epsilon$ condition \eqref{t1},
the approximate system generates a unique regular solution $(\rho^\epsilon,v^\epsilon)$ satisfying estimates (\ref{t2}) and (\ref{t3}) with rhs's independent of $\epsilon$. Then we are allowed to subtract a subsequence $\epsilon_k \to 0$
(write $\epsilon \to 0$ for short) such that
\begin{equation}
  \rho^\epsilon -1 \rightharpoonup^* \rho -1  \mbox{ \ \ in \ \ } L_\infty(0,\infty;L_2\cap L_\infty(\R^3))
\end{equation}
and
\begin{equation}
  v^\epsilon - v^\mtd \rightharpoonup^* v-v^\mtd \mbox{ \ \ in  \ \ } L_\infty(0,\infty;L_2\cap \dot B^{2-2/p}_{p,p}(\R^3)) \cap \dot W^1_p(0,\infty;L_p(\R^3))\cap L_p(0,\infty;\dot W^2_p(\R^3)).
\end{equation}
In addition, by the argument of diagonal method together with Rellich type theorems applied for the compact (spacetime) subsets of $\R^3 \times \R_+$, we conclude
\begin{equation}
  v^\epsilon \to v\; \mbox{ a.e. pointwisely in } \R^3 \times (0,\infty).
\end{equation}
The above convergences are sufficient to pass to the limit in the equations (\ref{pertS1}) and prove that $(\rho,v)$ is indeed the solution to the (INS) system (\ref{INS}) (and also (\ref{pertS1}).
The only problematic term is $\rho^\epsilon \d_t v^\epsilon$, since here directly we have just weak convergence of single terms. However one can use the continuity equation of $\rho^\epsilon$ to represent as follows
\begin{equation}
  \rho^\epsilon \d_t v^\epsilon + \rho^\epsilon v^\epsilon \cdot \nabla v^\epsilon = \d_t(\rho^\epsilon v^\epsilon) + \divg (\rho^\epsilon v^\epsilon \otimes v^\epsilon).
\end{equation}
The rhs of the above relation allows to pass to the limit using its distributional form and strong convergence of the velocity. Hence the part concerning existence of Theorem \ref{thm:rough} is proved.

\subsection{Uniqueness}

We consider two solutions $ (h_1, w_1, q_1)$ and $(h_2, w_2, q_2)$ to the perturbed system \eqref{pertS1} starting from the same initial data $(h_0,v^\mtd_0,w_0)$ with
$h_0\in L_2\cap L_\infty(\R^3)$, $v^\mtd_0\in L_2\cap \dot B^{4-2/p}_{p,p}(\R^2)$
and $w_0\in L_2\cap \dot B^{2-2/p}_{p,p}(\R^3)$. According to \eqref{t3}, we have
\begin{equation}\label{w-apes}
  \sup_{t<\infty} \|w_i(t)\|_{L_2\cap \dot B^{2-2/p}_{p,p}(\mathbb{R}^3)} + \|(w_i)_t,\nabla^2 w_i,\nabla q_i\|_{L_p(\R^3\times(0,\infty))} \leq C \,c_0,
\end{equation}
which combined with \eqref{L2es2D} and \eqref{v2d-es1} leads to that for any $T>0$,
\begin{equation}\label{eq:vLinf}
\begin{split}
  \|v_i\|_{L_\infty(\R^3\times(0,T))} & \leq \|w_i\|_{L_\infty(0,T; L_\infty(\R^3))} + \|v^\mtd\|_{L_\infty(0,T; L_\infty(\R^2))}  \\
  & \leq \|w_i\|_{L_\infty(0,T; L_2\cap \dot B^{2-2/p}_{p,p}(\R^3))} + \|v^\mtd\|_{L_\infty(0,T; L_2\cap \dot B^{3-2/p}_{p,p}(\R^2))} \leq C,
\end{split}
\end{equation}
and (similar to \eqref{vL1Lip})
\begin{equation}\label{eq:Lip}
\begin{split}
  \int_0^T \|\nabla v_i(t)\|_{L_\infty(\R^3)}\dd t 
  & \leq C_0 T^{1-\frac{1}{p}} \|w_i\|_{L_p(0,T; L_2\cap \dot W^2_p(\R^3))} +  C_0 T \|v^\mtd\|_{L_\infty(0,T; L_2\cap \dot B^{3-2/p}_{p,p}(\R^2))} \\
  & \leq C (T^{1-\frac{1}{p}}+ T),
\end{split}
\end{equation}
with $C$ some constant depending on the initial data. Note that due to $v^\mtd_0\in L_2\cap \dot B^{4-2/p}_{p,p}(\R^2)$, we also have the estimate \eqref{v2d-es2}, which is more regular than \eqref{v2d-es1}.

By adopting the notations introduced in the subsection \ref{subsec:Lag}, the system of $(h_i,w_i)$ ($i=1,2$) in the Lagrangian coordinates is written as
\begin{equation}\label{LpertS3}
\begin{cases}
  \partial_t \bar{h}_i =0, \\
  \partial_t \bar{w}_i -\divg\left( A_{v_i} A_{v_i}^{\textrm{T}} \nabla_y \bar{w}_i\right) + A_{v_i}^{\textrm{T}}\nabla_y \overline{q}_i = \bar F_i, \\
  \divg_y\left( A_{v_i} \bar{w}_i \right) =0, \\
  \bar{h}_i|_{t=0}= h_0,\quad \bar{w}_i|_{t=0}= w_0,
\end{cases}
\end{equation}
with $\bar{F}_i(t,y) = F_i(t, X_{v_i}(t,y))$ and
\begin{equation}\label{Fi-exp}
  F_i:= -h (v^\mtd)_t - h \,\partial_t w_i -h(v_i\cdot\nabla w_i) -\rho( w_{i,\mh} \cdot\nabla_\mh v^\mtd) -h (v^\mtd_\mh\cdot\nabla_\mh v^\mtd).
\end{equation}
Note that from \eqref{bar-h}, the density in the framework of Lagrangian coordinates are the same, that is,
$$\bar{h}_1(t,y)=\bar{h}_2(t,y)\equiv h_0(y).$$
We see the difference equation of $\bar{w}_1-\bar{w}_2=: \delta \bar w$ reads as follows
\begin{equation}\label{LpertS-del}
\begin{cases}
  \delta \bar{w}_t -\divg\big( A_{v_1} A_{v_1}^{\textrm{T}} \nabla_y \delta \bar{w}\big) + A_{v_1}^{\textrm{T}}\nabla_y \delta \overline{q} =  \delta \bar{F} + \divg\left( (A_{v_1} A_{v_1}^{\textrm{T}} -
  A_{v_2}A_{v_2}^{\textrm{T}})\nabla_y  \bar{w}_2 \right)-(A_{v_1}^{\textrm{T}}-A_{v_2}^{\textrm{T}})\nabla_y  \overline{q}_2 , \\
  \divg_y\left( A_{v_1} \delta\bar{w} \right) =\divg\big((A_{v_1}-A_{v_2})\bar{w}_2\big) , \\
  \delta \bar{w}|_{t=0}=0,
\end{cases}
\end{equation}
where $\delta \bar q :=\bar q_1 - \bar q_2$, and $\delta\bar F:= \bar F_1 - \bar F_2$ is decomposed as $\sum_{i=1}^5 \delta\bar{F}^i$ with
\begin{equation}\label{delFi}
\begin{split}
  & \delta\bar{F}^1:=- h_0(y) \big(\partial_t v^\mtd(t, X_{v_1,\mh}) - \partial_t v^\mtd(t,X_{v_2,\mh})\big),\qquad \delta \bar{F}^2 := h_0(y) \delta \bar{w}_t, \\
  &\delta\bar{F}^3 := - h_0(y) \big((v^\mtd_\mh\cdot \nabla_\mh v^\mtd)(t,X_{v_1,\mh})-(v^\mtd_\mh\cdot \nabla_\mh v^\mtd)(t,X_{v_2,\mh}) \big), \\
  &\delta \bar{F}^4 := - \rho_0(y)\, \delta\overline{w_\mh}(t,y)\,\cdot \nabla_\mh v^\mtd\left( t,X_{v_1,\mh}\right), \\
  &\delta \bar{F}^5 :=- \rho_0(y)\, \overline{w_{2,\mh}}(t,y)\,\cdot \big(\nabla_\mh v^\mtd( t,X_{v_1,\mh})-\nabla_\mh v^\mtd( t,X_{v_2,\mh}) \big).
\end{split}
\end{equation}

We want to show that the norm $\|\delta \bar w(t)\|_{L_2}$ is zero, by the energy type estimates. The basic problem is related to the nonhomogeneous right-hand side of the second equation.
Because of it, we are not allowed to test the equation \eqref{LpertS-del} by $\delta \bar w$. We instead have to split it into two parts
\begin{equation}\label{z1z2}
  \delta \bar w = z^1 +z^2,
\end{equation}
where $z^1$ is given as a solution to the following divergence equation
\begin{equation}\label{z1def}
  \divg (A_{v_1} z^1)= \divg((A_{v_1}-A_{v_2})\bar{w}_2) = (A_{v_1}-A_{v_2}):\nabla_y \bar{w}_2.
\end{equation}
The construction of such a field $z^1$ can be done by the method from \cite{DanMdiv}, and one can see Lemma \ref{lem:divEq} for details.
We below need to verify the conditions \eqref{g-cd} and \eqref{A-cd} in Lemma \ref{lem:divEq}.
Note that from \eqref{DXvest} and \eqref{eq:Lip}, by letting $T$ be small enough we have
\begin{align}
  \int_0^T \|\nabla_y \bar{v}_1(t)\|_{L_\infty(\R^3)}\dd t & \leq \int_0^T \|\nabla_x v_1(t)\|_{L_\infty(\R^3)} \|\nabla_y X_{v_1}(t)\|_{L_\infty(\R^3)}\dd t \nonumber\\
  & \leq \int_0^T \|\nabla v_1\|_{L_\infty(\R^3)}\dd t \exp\left\{\int_0^T \|\nabla v_1\|_{L_\infty(\R^3)}\dd t\right\} \label{eq:fact} \\
  & \leq C T^{1-1/p} e^{CT^{1-1/p}} \leq \min\big\{\frac{c}{2},\frac{1}{4}\big\}, \nonumber
\end{align}
with $c>0$ the constant in \eqref{A-cd}, thus thanks to \eqref{Avbd1},
\begin{align}\label{eq:fact2}
  \|\mathrm{Id}-A_{v_1}\|_{L_\infty(0,T;L_\infty(\R^3))}\leq 2\int_0^T \|\nabla_y \bar{v}_1(t)\|_{L_\infty(\R^3)}\dd t \leq \min\big\{c,\frac{1}{2}\big\},
\end{align}
and
\begin{align}\label{eq:fact3}
  \|A_{v_1}\|_{L_\infty(0,T; L_\infty(\R^3))} \leq 2.
\end{align}
Taking advantage of \eqref{Avbd1} again, we see that
\begin{equation*}
  (A_{v_1})_t = \sum_{k=1}^\infty (-1)^k k \left( \int_0^t \nabla_y \bar v_1(\tau,y)\dd \tau\right)^{k-1} \big(\nabla_y \bar v_1(t,y)\big),
\end{equation*}
thus using \eqref{eq:fact}, \eqref{eq:Lip} and by letting $T>0$ be small enough we get
\begin{equation}\label{eq:fact4}
\begin{split}
  \|(A_{v_1})_t\|_{L_2(0,T; L_\infty(\R^3))} & \leq C \|\nabla_y \bar v_1\|_{L_2(0,T;L_\infty(\R^3))}
  \leq C T^{\frac{1}{2}-\frac{1}{p}} e^{C T^{1-1/p}} \leq c,
\end{split}
\end{equation}
which combined with \eqref{eq:fact2} ensures \eqref{A-cd}.
As for the condition \eqref{g-cd}, recalling $\mathcal{N}_p(T)$ is the function space defined in \eqref{Np}, we have to justify that
\begin{equation}\label{cdTarg}
  (A_{v_1}-A_{v_2})\bar{w}_2\in L_\infty(0,T; L_2),\;\; (A_{v_1}-A_{v_2}):\nabla_y \bar{w}_2 \in L_2(0,T; L_2),\;\;
  \big((A_{v_1}-A_{v_2})\bar{w}_2\big)_t \in \mathcal{N}_p(T),
\end{equation}
then we find
\begin{equation}\label{z1es1}
 \|(A_{v_1}-A_{v_2})\bar{w}_2\|_{L_\infty(0,T;L_2(\R^3))} \leq \|A_{v_1}-A_{v_2}\|_{L_\infty(0,T;L_2(\R^3))} \|\bar w_2\|_{L_\infty(0,T;L_\infty(\R^3))},
\end{equation}
and
\begin{equation}\label{z1es2}
  \|(A_{v_1}-A_{v_2}):\nabla_y \bar{w}_2\|_{L_2(0,T;L_2(\R^3))} \leq \|A_{v_1}-A_{v_2}\|_{L_\infty(0,T;L_2(\R^3))} \|\bar w_2\|_{L_2(0,T;\dot W^1_\infty(\R^3))},
\end{equation}
and
\begin{align}\label{z1es3}
  \|\big((A_{v_1}-A_{v_2})\bar{w}_2\big)_t\|_{\mathcal{N}(T)} \leq & \, \|(A_{v_1}-A_{v_2})\partial_t\bar w_2\|_{L_{\frac{2p}{2p-3}}(0,T;L_{\frac{2p}{p+2}})}
  + \|(A_{v_1}-A_{v_2})_t\bar w_2\|_{L_{\frac{2p}{2p-3}}(0,T;L_{\frac{2p}{p+2}})} \nonumber \\
  \leq & \, T^{\frac{p-3}{2p}}\|A_{v_1}-A_{v_2}\|_{L_\infty(0,T;L_2(\R^3))} \|\partial_t\bar w_2\|_{L_2(0,T;L_{p}(\R^3))} \nonumber \\
  & + T^{\frac{p-3}{2p}} \|(A_{v_1}-A_{v_2})_t\|_{L_2(0,T;L_2(\R^3))} \|\bar w_2\|_{L_\infty(0,T;L_{p}(\R^3))}.
\end{align}
Thanks to \eqref{DXvest}, \eqref{w-apes}-\eqref{eq:Lip}, we have
\begin{equation*}
  \|\bar w_2\|_{L_\infty(0,T;L_\infty\cap L_p(\R^3))}\leq \|w_2\|_{L_\infty(0,T; L_\infty\cap L_p(\R^3))} \leq C \|w_2\|_{L_\infty(0,T; L_2\cap \dot B^{2-2/p}_{p,p})} \leq C,
\end{equation*}
and
\begin{equation*}
\begin{split}
  \|\nabla_y\bar w_2\|_{L_2(0,T;L_\infty(\R^3))} & \leq \|\nabla_y X_{v_2}\|_{L_\infty(\R^3\times(0,T))} \|\nabla_x w_2\|_{L_2(0,T;L_\infty(\R^3))} \\
  & \leq C T^{\frac{p-2}{2p}} e^{\|\nabla v_2\|_{L_1(0,T; L_\infty)}} \|w_2\|_{L_p(0,T; L_2\cap \dot W^2_p(\R^3))} \leq C T^{\frac{p-2}{2p}},
\end{split}
\end{equation*}
and
\begin{equation*}
\begin{split}
  & \|\partial_t\bar w_2\|_{L_2(0,T;L_p(\R^3))} \leq \| \partial_t w_2\|_{L_2(0,T;L_p(\R^3))} + \|\nabla_x w_2\|_{L_\infty(0,T; L_p(\R^3))} \|\partial_t X_{v_2}\|_{L_2(0,T; L_\infty(\R^3))} \\
  & \leq C T^{\frac{p-2}{2p}} \| \partial_t w_2\|_{L_p(\R^3\times(0,T))} + T^{\frac{1}{2}} \|w_2\|_{L_\infty(0,T; L_2\cap \dot B^{2-2/p}_{p,p}(\R^3))} \|v_2\|_{L_\infty(\R^3\times(0,T))}
  \leq C T^{\frac{p-2}{2p}},
\end{split}
\end{equation*}
thus in order to obtain \eqref{cdTarg}, it suffices to control $\|A_{v_1}-A_{v_2}\|_{L_\infty(0,T;L_2(\R^3))}$ and $\|(A_{v_1}-A_{v_2})_t\|_{L_2(0,T;L_2(\R^3))}$.
Next from \eqref{DxY} observe that
\begin{equation*}
\begin{split}
  A_{v_1}-A_{v_2} =\,&\sum_{k=1}^\infty \sum_{j=0}^{k-1} (-1)^k (C_{v_1}(t,y))^j (C_{v_2}(t,y))^{k-1-j} \int_0^t \nabla_y \delta\bar v(\tau,y)\dd y, \\
  (A_{v_1}- A_{v_2})_t=\,& \sum_{k=1}^\infty (-1)^k k \left((C_{v_1}(t,y))^{k-1} \nabla \bar{v}_1(t,y) - (C_{v_2}(t,y))^{k-1} \nabla \bar{v}_2(t,y) \right) \\
  = \,& \nabla \delta \bar v(t,y) + \nabla\delta \bar v(t,y) \sum_{k=2}^\infty (-1)^k k (C_{v_2}(t,y))^{k-1} \\
  & + \nabla \bar{v}_1(t,y)\sum_{k=2}^\infty\sum_{j=0}^{k-2} (-1)^k k (C_{v_1}(t,y))^j (C_{v_2}(t,y))^{k-2-j} \int_0^t \nabla_y \delta\bar v(\tau,y)\dd y ,
\end{split}
\end{equation*}
with $C_{v_i}(t,y)= \int_0^t \nabla \bar{v}_i(\tau,y)\dd y$, $i=1,2$, by letting $T>0$ small enough so that \eqref{eq:fact} holds, we get
\begin{equation}\label{Av-es1}
  \|A_{v_1}-A_{v_2}\|_{L_\infty(0,T;L_2(\R^3))} \leq CT^{1/2}\big(\|\nabla\big((\overline{v^\mtd})_1 -(\overline{v^\mtd})_2\big)\|_{L_2(0,T;L_2(\R^3))} + \|\nabla \delta \bar w\|_{L_2(0,T;L_2(\R^3))}\big),
\end{equation}
and
\begin{equation}\label{Av-es2}
  \|(A_{v_1}-A_{v_2})_t\|_{L_2(0,T;L_2(\R^3))} \leq C\big(\|\nabla\big((\overline{v^\mtd})_1 -(\overline{v^\mtd})_2\big)\|_{L_2(0,T;L_2(\R^3))} + \|\nabla \delta \bar w\|_{L_2(0,T;L_2(\R^3))}\big),
\end{equation}
where $\overline{v^\mtd}^i= v^\mtd (t,X_{v_i,\mh}(t,y))$, $i=1,2$.
Noting that
\begin{equation}\label{bar-v2d}
\begin{split}
  (\overline{v^\mtd})_1(t,y)- (\overline{v^\mtd})_2(t,y) = (X_{v_1,\mh}(t,y)-X_{v_2,\mh}(t,y))\cdot\int_0^1\nabla_{x_\mh} v^\mtd\big(t,s X_{v_1,\mh}+(1-s)X_{v_2,\mh}\big)\dd s& \\
  = \int_0^t \big((\overline{v^\mtd_\mh})_1(\tau,y)- (\overline{v^\mtd_\mh})_2(\tau,y) + \delta\overline{w_\mh}(\tau,y)\big)\dd \tau\cdot\int_0^1\nabla_{x_\mh} v^\mtd\big(t,s X_{v_1,\mh}+(1-s)X_{v_2,\mh}\big)\dd s& ,
\end{split}
\end{equation}
and
\begin{equation}
\begin{split}
  & \nabla_y\big((\overline{v^\mtd})_1(t,y)- (\overline{v^\mtd})_2(t,y)\big) \\
  = &\nabla_y X_{v_1,\mh}(t,y)\cdot\nabla_{x_\mh} v^\mtd\big(t,X_{v_1,\mh}(t,y)\big) -\nabla_y X_{v_2,\mh}(t,y)\cdot \nabla_{x_\mh} v^\mtd\big(t,X_{v_2,\mh}(t,y)\big) \\
  = & \nabla_y X_{v_1,\mh}(t,y) \cdot \big(\nabla_{x_\mh} v^\mtd(t,X_{v_1,\mh}(t,y))-\nabla_{x_\mh} v^\mtd(t,X_{v_2,\mh}(t,y))\big)  \\
  & + \left(\nabla_y X_{v_1,\mh}(t,y)-\nabla_y X_{v_2,\mh}(t,y)\right) \cdot \nabla_{x_\mh} v^\mtd(t,X_{v_2,\mh}(t,y))\\
  = &\nabla_y X_{v_1,\mh}(t,y)\cdot\int_0^1\nabla^2_{x_\mh} v^\mtd\big(t,s X_{v_1,\mh}+(1-s)X_{v_2,\mh}\big)\dd s\, \cdot(X_{v_1,\mh}(t,y)-X_{v_2,\mh}(t,y)) \\
  & +\int_0^t\Big(\nabla_y (\overline{v^\mtd_\mh})_1(\tau,y)- \nabla_y (\overline{v^\mtd_\mh})_2(\tau,y) + \nabla\delta\overline{w_\mh}(\tau,y)\Big)\dd \tau\cdot\nabla_{x_\mh} v^\mtd(t,X_{v_2,\mh}(t,y)) ,
\end{split}
\end{equation}
and due to that $\nabla_x v^\mtd$ is Lipschitzian and bounded (see \eqref{v2d-es2} below), so by letting time $T$ small enough we have
\begin{equation}\label{b-v2des}
  \|(\overline{v^\mtd})_1 - (\overline{v^\mtd})_2\|_{L_\infty(0,T;L_2(\R^3))}   \leq CT \|\delta \bar w\|_{L_\infty(0,T;L_2(\R^3))},
\end{equation}
and
\begin{equation}\label{b-v2des2}
   \|\nabla\big((\overline{v^\mtd})_1 - (\overline{v^\mtd})_2\big)\|_{L_2(0,T;L_2(\R^3))}
  \leq C T^{1/2}\big(\|\delta \bar w\|_{L_\infty(0,T;L_2(\R^3))} + \|\nabla \delta \bar w\|_{L_2(0,T;L_2(\R^3))}\big).
\end{equation}
Inserting \eqref{b-v2des}-\eqref{b-v2des2} into \eqref{Av-es1}-\eqref{Av-es2} leads to that for sufficiently small $T$,
\begin{equation}\label{Av-es3}
  \|A_{v_1}-A_{v_2}\|_{L_\infty(0,T;L_2(\R^3))} \leq CT^{1/2}(\|\delta \bar w\|_{L_\infty(0,T;L_2(\R^3))} + \|\nabla \delta \bar w\|_{L_2(0,T;L_2(\R^3))}),
\end{equation}
and
\begin{equation}\label{Av-es4}
  \|(A_{v_1}-A_{v_2})_t\|_{L_2(0,T;L_2(\R^3))} \leq C(\|\delta \bar w\|_{L_\infty(0,T;L_2(\R^3))} + \|\nabla \delta \bar w\|_{L_2(0,T;L_2(\R^3))}).
\end{equation}
By collecting the above estimates, we thus verify the condition \eqref{cdTarg}. Hence, Lemma \ref{lem:divEq} and the above estimates ensure that
\begin{equation}\label{z1es-key}
  \|z^1\|_{L_\infty(0,T;L_2(\R^3))} + \|\nabla z^1\|_{L_2(\R^3\times(0,T))} \leq C T^{1/2}\big(\|\delta \bar w\|_{L_\infty(0,T;L_2(\R^3))} + \|\nabla \delta \bar w\|_{L_2(0,T;L_2(\R^3))}\big),
\end{equation}
and
\begin{equation}\label{z1es5}
  \|\partial_t z^1\|_{\mathcal{N}(T)} \leq C T^{\frac{p-3}{2p}}\big(\|\delta \bar w\|_{L_\infty(0,T;L_2(\R^3))} + \|\nabla \delta \bar w\|_{L_2(0,T;L_2(\R^3))}\big).
\end{equation}
\vskip0.2cm

Now we look at the equation on $z^2$ which satisfies that
\begin{equation}\label{LpertS-z2}
\begin{cases}
  \partial_t z^2 -\divg\left( A_{v_1} A_{v_1}^{\textrm{T}} \nabla_y z^2\right) + A_{v_1}^{\textrm{T}}\nabla_y \delta \overline{q} =  \delta\bar F + \sum_{i=1}^4 \bar{L}^i \\
  \divg_y\left( A_{v_1} z^2\right) =0, \\
  z^2|_{t=0}=0,
\end{cases}
\end{equation}
with $\delta \bar{F}:= \sum_{i=1}^5 \delta \bar{F}^i$ given by \eqref{delFi}, and
\begin{equation}
\begin{split}
  & \bar{L}^1 = \divg\left( (A_{v_1} A_{v_1}^{\textrm{T}} -  A_{v_2}A_{v_2}^{\textrm{T}})\nabla_y  \bar{w}_2 \right),\quad \bar{L}^2=-(A_{v_1}^{\textrm{T}}-A_{v_2}^{\textrm{T}})\nabla_y  \overline{q}_2,\\
  & \bar{L}^3 =-\partial_t z^1,\qquad\qquad \bar{L}^4=\divg\left( A_{v_1} A_{v_1}^{\textrm{T}} \nabla_y z^1\right).
\end{split}
\end{equation}
We test the equation \eqref{LpertS-z2} by $z^2$, and noticing
\begin{equation}
  \int_{\R^3} A_{v_1}^{\textrm{T}}\nabla_y \delta \overline{q}(t,y)\, z^2(t,y) \dd y = -\int_{\R^3} \delta \overline{q}(t,y)\,\divg(A_{v_1} z^2)(t,y) \dd y =0,
\end{equation}
we derive that
\begin{equation}\label{z2-es1}
  \frac{1}{2} \frac{\dd}{\dd t} \|z^2(t)\|_{L_2}^2 + \|A_{v_1}^{\textrm{T}} \nabla_y z^2(t)\|_{L_2(\R^3)}^2 \leq \sum_{i=1}^5 \int_{\R^3} \delta \bar{F}^i(t,y) z^2(t,y)\dd y
  + \sum_{i=1}^4 \int_{\R^3} \bar{L}^i(t,y) \,z^2(t,y)\dd y.
\end{equation}
By virtue of the estimate $\|\mathrm{Id}-A_{v_1}\|_{L_\infty(0,T;L_\infty(\R^3))}\leq \frac{1}{2}$ (from \eqref{eq:fact2}), it induces that
\begin{equation}
\begin{split}
  \|A_{v_1}^{\textrm{T}} \nabla_y z^2(t)\|_{L_2(\R^3)}^2 & \geq \Big(\|\nabla_y z^2(t)\|_{L_2(\R^3)} - \|\mathrm{Id}-A_{v_1}\|_{L_\infty(\R^3\times(0,T))} \|\nabla_y z^2(t)\|_{L_2(\R^3)} \Big)^2 \\
  & \geq \frac{1}{2}\|\nabla_y z^2(t)\|_{L_2(\R^3)}^2 -  \|\mathrm{Id}-A_{v_1}\|_{L_\infty(\R^3\times(0,T))}^2 \|\nabla_y z^2(t)\|_{L_2(\R^3)}^2 \\
  & \geq \frac{1}{4}\|\nabla_y z^2(t)\|_{L_2(\R^3)}^2.
\end{split}
\end{equation}
From the integration by parts, H\"older's inequality and \eqref{eq:fact3}, we see that
\begin{equation}
\begin{split}
  \left|\int_{\R^3} \bar{L}^1(t,y) \,z^2(t,y)\dd y\right| & = \left|\int_{\R^3}  (A_{v_1} A_{v_1}^{\textrm{T}} -  A_{v_2}A_{v_2}^{\textrm{T}})\nabla_y  \bar{w}_2
  \cdot\nabla z^2(t,y) \dd y\right| \\
  & \leq \|A_{v_1} A_{v_1}^{\textrm{T}} -A_{v_2} A_{v_2}^{\textrm{T}}\|_{L_\infty(0,T; L_2(\R^3))} \|\nabla_y \bar{w}_2(t) \|_{L_\infty(\R^3)} \|\nabla z^2(t)\|_{L_2(\R^3)} \\
  & \leq \frac{1}{32} \|\nabla z^2(t)\|_{L_2(\R^3)}^2 + C \|A_{v_1}-A_{v_2}\|_{L_\infty(0,T; L_2)}^2 \|\nabla_y \bar{w}_2(t) \|_{L_\infty(\R^3)}^2.
\end{split}
\end{equation}
Using H\"older's inequality and the interpolation inequality, it follows
\begin{equation}
\begin{split}
  \left|\int_{\R^3} \bar{L}^2(t,y) \,z^2(t,y) \dd y\right| & \leq  \|A_{v_1}-A_{v_2}\|_{L_\infty(0,T:L_2(\R^3))} \|\nabla_y\bar{q}_2(t)\|_{L_p(\R^3)} \|z^2(t)\|_{L_{\frac{2p}{p-2}}(\R^3)}  \\
  & \leq  \|A_{v_1}-A_{v_2}\|_{L_\infty(0,T; L_2(\R^3))}\|\nabla_y\bar{q}_2(t)\|_{L_p}  \|z^2(t)\|_{L_2(\R^3)}^{\frac{p-3}{p}} \| \nabla z^2(t)\|_{L_2(\R^3)}^{\frac{3}{p}}\\
  & \leq \frac{1}{32} \|\nabla z^2(t)\|_{L_2}^2 + C \|A_{v_1}-A_{v_2}\|_{L_\infty(0,T; L_2)}^{\frac{2p}{2p-3}} \|\nabla\bar{q}_2(t)\|_{L_p}^{\frac{2p}{2p-3}} \|z^2(t)\|_{L_2}^{\frac{2p-6}{2p-3}} .
\end{split}
\end{equation}
For the right-hand side of \eqref{z2-es1} containing $\bar{L}^3= -\partial_t z^1$, since $\partial_t z^1\in \mathcal{N}_p(T)$ with $\mathcal{N}_p(T)$ defined by \eqref{Np}, for any $\epsilon>0$ we take
$$
\partial_tz^1= a_\epsilon + b_\epsilon, \mbox{ \ \ with \ \ } a_\epsilon \in L_{\frac{2p}{2p-3}}(0,T;L_{\frac{2p}{p+2}}(\R^3)), \; b_\epsilon \in L_2(0,T;L_2(\R^3)),
$$
so that
\begin{equation}\label{ab-es}
  \|a_\epsilon\|_{L_{\frac{2p}{2p-3}}(0,T;L_{\frac{2p}{p+2}}(\R^3))} + \|b_\epsilon\|_{L_2(0,T;L_2(\R^3))} \leq \|\partial_t z^1\|_{\mathcal{N}_p(T)} + \epsilon,
\end{equation}
thus by virtue of H\"older's inequality and the interpolation inequality, we infer that for $p\in (3,\infty)$,
\begin{equation}\label{barL3es}
\begin{split}
  \left|\int_{\R^3} \bar{L}^3(t,y) \,z^2(t,y) \dd y\right| & \leq \|a_\epsilon(t)\|_{L_{\frac{2p}{p+2}}(\R^3)}  \|z^2(t)\|_{L_{\frac{2p}{p-2}}(\R^3)} +\|b_\epsilon(t)\|_{L_2(\R^3)}\|z^2(t)\|_{L_2(\R^3)} \\
  & \leq \|a_\epsilon(t)\|_{L_{\frac{2p}{p+2}}} \|z^2(t)\|_{L_2(\R^3)}^{\frac{p-3}{p}} \|\nabla z^2(t)\|_{L_2(\R^3)}^{\frac{3}{p}} +\|b_\epsilon(t)\|_{L_2}\|z^2(t)\|_{L_2(\R^3)}  \\
  & \leq \frac{1}{32} \|\nabla z^2(t)\|_{L_2}^2 + C \|a_\epsilon\|_{L_{\frac{2p}{p+2}}}^{\frac{2p}{2p-3}} \|z^2(t)\|_{L_2(\R^3)}^{\frac{2p-6}{2p-3}} +\|b_\epsilon(t)\|_{L_2}\|z^2(t)\|_{L_2(\R^3)} .
\end{split}
\end{equation}
For the right-hand side of \eqref{z2-es1} containing $\bar{L}^4$, we integrate by parts and use \eqref{eq:fact3} to show that
\begin{equation}
\begin{split}
  \left|\int_{\R^3} \bar{L}^4(t,y) \,z^2(t,y) \dd y\right| & = \left| \int_{\R^3} A_{v_1} A_{v_1}^{\mathrm{T}} \nabla z^1(t,y)\cdot\nabla z^2(t,y)\dd y\right| \\
  & \leq \|A_{v_1}\|_{L_\infty(\R^3\times (0,T))} \|\nabla z^1(t)\|_{L_2(\R^3)} \|\nabla z^2(t)\|_{L_2(\R^3)}  \\
  & \leq \frac{1}{32} \|\nabla z^2(t)\|_{L_2(\R^3)}^2 + C \|\nabla z^1(t)\|_{L_2(\R^3)}^2 .
\end{split}
\end{equation}
Next we consider the right-hand-side terms of \eqref{z2-es1} containing $\delta\bar F^i$ $(1\leq i\leq5)$ which are given by \eqref{delFi}. By H\"older's inequality and
\begin{equation*}
  \partial_t v^\mtd(t,X_{v_1,\mh})-\partial_t v^\mtd(t,X_{v_2,\mh})= \int_0^1 (X_{v_1,\mh}-X_{v_2,\mh})\cdot \nabla_{x_\mh}\partial_t v^\mtd\big(t,sX_{v_1,\mh}+(1-s)X_{v_2,\mh}\big)\dd s,
\end{equation*}
we get
\begin{equation}
\begin{split}
  \left|\int_{\R^3} \delta \bar F^1(t,y) z^2(t,y) \dd y\right| & \leq \|h_0\|_{L_\infty} \|\partial_t v^\mtd (t,X_{v_1,\mh})-\partial_t v^\mtd (t,X_{v_2,\mh})\|_{L_2(\R^3)} \|z^2(t)\|_{L_2(\R^3)} \\
  & \leq \|h_0\|_{L_\infty} \|\partial_t\nabla_\mh v^\mtd (t)\|_{L_\infty(\R^2)} \|X_{v_1}-X_{v_2}\|_{L_\infty(0,T;L_2(\R^3))} \|z^2(t)\|_{L_2(\R^3)}.
\end{split}
\end{equation}
It follows from \eqref{Xv} that
\begin{equation*}
  X_{v_1}(t,y)-X_{v_2}(t,y) = \int_0^t \left((\overline{v^\mtd})_1(\tau,y)-(\overline{v^\mtd})_2(\tau,y) + \delta \bar w(\tau,y)\right)\dd \tau ,
\end{equation*}
and by \eqref{b-v2des},
\begin{equation*}
  \|X_{v_1}-X_{v_2}\|_{L_\infty(0,T;L_2(\R^3))} \leq C T \|\delta \bar w\|_{L_\infty(0,T; L_2(\R^3))} ,
\end{equation*}
thus we obtain
\begin{equation}
  \left|\int_{\R^3} \delta \bar F^1(t,y) z^2(t,y) \dd y\right|\leq C T\|h_0\|_{L_\infty} \|\partial_t\nabla_\mh v^\mtd (t)\|_{L_\infty(\R^2)} \|\delta \bar w\|_{L_\infty(0,T;L_2(\R^3))} \|z^2(t)\|_{L_2(\R^3)}.
\end{equation}
For the right-hand-side terms of \eqref{z2-es1} containing $\delta\bar F^2$, in a similar way as the deduction in \eqref{barL3es}, we have
\begin{equation}
\begin{split}
  & \int_{\R^3} \delta \bar F^2(t,y) z^2(t,y) \dd y = \frac{1}{2}\frac{\dd}{\dd t} \int_{\R^3} h_0(y) |z^2(t,y)|^2\dd y + \int_{\R^3} h_0(y) \partial_t z^1(t,y) z^2(t,y) \dd y \\
  \leq & \frac{1}{2}\frac{\dd}{\dd t} \int_{\R^3} h_0(y) |z^2(t,y)|^2\dd y + \|h_0\|_{L_\infty} \|a_\epsilon(t)\|_{L_{\frac{2p}{p+2}}}  \|z^2(t)\|_{L_{\frac{2p}{p-2}}}
  + \|h_0\|_{L_\infty} \|b_\epsilon(t)\|_{L_2} \|z^2(t)\|_{L_2}  \\
  \leq & \frac{1}{2}\frac{\dd}{\dd t} \int_{\R^3} h_0(y) |z^2(t,y)|^2\dd y + \frac{1}{32} \|\nabla z^2(t)\|_{L_2(\R^3)}^2
  +  C \|h_0\|_{L_\infty(\R^3)}^{\frac{2p}{2p-3}} \|a_\epsilon(t)\|_{L_{\frac{2p}{p+2}}(\R^3)}^{\frac{2p}{2p-3}} \|z^2(t)\|_{L_2(\R^3)}^{\frac{2p-6}{2p-3}} \\
  & + \|h_0\|_{L_\infty(\R^3)} \|b_\epsilon(t)\|_{L_2(\R^3)} \|z^2(t)\|_{L_2(\R^3)} .
\end{split}
\end{equation}
By arguing as above, we find that
\begin{equation}
\begin{split}
  \bigg|\int_{\R^3} \delta \bar F^3\, z^2(t,y) & \dd y \bigg|\leq \|h_0\|_{L_\infty} \|v^\mtd_\mh\cdot\nabla_\mh v^\mtd (t, X_{v_1,\mh})-v^\mtd_\mh\cdot\nabla_\mh v^\mtd(t,X_{v_2,\mh})\|_{L_2(\R^3)}
  \|z^2(t)\|_{L_2} \\
  & \leq \|h_0\|_{L_\infty} \|\nabla_\mh(v^\mtd_\mh\cdot\nabla_\mh v^\mtd)(t)\|_{L_\infty(\R^2)} \|X_{v_1}- X_{v_2}\|_{L_\infty(0,T;L_2(\R^3))} \|z^2(t)\|_{L_2} \\
  & \leq C T \|h_0\|_{L_\infty} \|\nabla_\mh(v^\mtd_\mh\cdot\nabla_\mh v^\mtd)(t)\|_{L_\infty(\R^2)} \|\delta \bar w\|_{L_\infty(0,T;L_2(\R^3))} \|z^2(t)\|_{L_2(\R^3)},
\end{split}
\end{equation}
and
\begin{equation}
\begin{split}
  \left|\int_{\R^3} \delta \bar F^4(t,y) z^2(t,y) \dd y\right| 
  \leq \|\rho_0\|_{L_\infty(\R^3)}\|\nabla_\mh v^\mtd(t)\|_{L_\infty(\R^2)} \|\delta \bar w(t)\|_{L_2(\R^3)} \|z^2(t)\|_{L_2(\R^3)},
\end{split}
\end{equation}
and
\begin{equation}
\begin{split}
  \left| \int_{\R^3} \delta \bar F^5(t,y) z^2(t,y) \dd y\right| & \leq \|\rho_0\|_{L_\infty} \|\bar w_2(t)\|_{L_\infty} \|\nabla_\mh v^\mtd(t,X_{v_1,\mh})-\nabla_\mh v^\mtd(t,X_{v_2,\mh})\|_{L_2}
  \|z^2(t)\|_{L_2}\\
  & \leq C T \|\rho_0\|_{L_\infty}  \|w_2\|_{L_\infty(\R^3\times (0,T))} \|\nabla^2_\mh v^\mtd(t)\|_{L_\infty} \|\delta \bar w\|_{L_\infty(0,T;L_2)}\|z^2(t)\|_{L_2}.
\end{split}
\end{equation}
Gathering \eqref{z2-es1} and the above estimates yields
\begin{equation*}
\begin{split}
  & \frac{\dd }{\dd t} \|z^2(t)\|_{L_2}^2 + \frac{3}{16} \|\nabla z^2(t)\|_{L_2}^2 - \frac{\dd }{\dd t}\int_{\R^3} h_0(y) |z^2(t,y)|^2\dd y \\
  \leq\, & C \|A_{v_1}-A_{v_2}\|_{L_\infty(0,T; L_2)}^2 \|\nabla_y \bar{w}_2(t) \|_{L_\infty}^2
  + C \|A_{v_1}-A_{v_2}\|_{L_\infty(0,T; L_2)}^{\frac{2p}{2p-3}} \|\nabla\bar{q}_2(t)\|_{L_p}^{\frac{2p}{2p-3}} \|z^2(t)\|_{L_2}^{\frac{2p-6}{2p-3}} \\
  & + C (1+\|h_0\|_{L_\infty}^{\frac{2p}{2p-3}})\|a_\epsilon(t)\|_{L_{\frac{2p}{p+2}}}^{\frac{2p}{2p-3}} \|z^2(t)\|_{L_2}^{\frac{2p-6}{2p-3}}
  +C(1+ \|h_0\|_{L_\infty}) \|b_\epsilon(t)\|_{L_2}\|z^2(t)\|_{L_2} + C\|\nabla z^1(t)\|_{L_2}^2 \\
  & + C T\|h_0\|_{L_\infty} \left(\|\partial_t\nabla_\mh v^\mtd (t)\|_{L_\infty(\R^2)}+ \|\nabla_\mh(v^\mtd_\mh\cdot\nabla_\mh v^\mtd)(t)\|_{L_\infty(\R^2)}\right) \|\delta \bar w\|_{L_\infty(0,T;L_2)} \|z^2(t)\|_{L_2} \\
  & + C \|\rho_0\|_{L_\infty} \left(\|\nabla_\mh v^\mtd(t)\|_{L_\infty(\R^2)} + T\| w_2\|_{L_\infty(\R^3\times (0,T))} \|\nabla^2_\mh v^\mtd(t)\|_{L_\infty}\right) \|\delta \bar w\|_{L_\infty(0,T;L_2)}\|z^2(t)\|_{L_2}.
\end{split}
\end{equation*}
Noting that according to \eqref{DXvest}, \eqref{w-apes}, \eqref{eq:Lip} and \eqref{v2d-es2},
\begin{equation*}
\begin{split}
  \|\nabla_y \bar{w}_2\|_{L_2(0,T; L_\infty(\R^3))} & \leq \|\nabla_y X_{v_2}\|_{L_\infty(\R^3\times (0,T))} \|\nabla_x w_2\|_{L_2(0,T; L_\infty(\R^3))} \\
  & \leq C e^{\|\nabla v_2\|_{L_1(0,T; L_\infty)}} T^{\frac{p-2}{2p}}\| w_2\|_{L_p(0,T; L_2\cap \dot W^2_p(\R^3))} \leq C T^{\frac{p-2}{2p}},
\end{split}
\end{equation*}
and
\begin{equation*}
\begin{split}
  \|\nabla_y \bar{q}_2\|_{L_{\frac{2p}{2p-3}}(0,T; L_p(\R^3))} & \leq \|\nabla_y X_{v_2}\|_{L_\infty(\R^3\times (0,T))} \|\nabla_x q_2\|_{L_{\frac{2p}{2p-3}}(0,T; L_p(\R^3))} \\
  & \leq C T^{\frac{2p-5}{2p}} e^{\|\nabla v_2\|_{L_1(0,T; L_\infty)}} \|\nabla q_2\|_{L_p(\R^3\times (0,T))} \leq C T^{\frac{2p-5}{2p}},
\end{split}
\end{equation*}
and
\begin{equation*}
\begin{split}
  \|\partial_t \nabla_\mh v^\mtd\|_{L_1(0,T; L_\infty(\R^2))} & \leq C T^{\frac{p-1}{p}}\|\partial_t v^\mtd\|_{L_p(0,T; W^2_p(\R^2))} \leq C T^{\frac{p-1}{p}}, \\
  \|\nabla_\mh(v^\mtd_\mh\cdot\nabla_\mh v^\mtd)\|_{L_1(0,T; L_\infty(\R^2))} & \leq \|\nabla_\mh v^\mtd\|_{L_1(0,T;L_\infty)}^2 + \|v^\mtd_\mh\|_{L_\infty(\R^2\times (0,T))} \|\nabla^2_\mh v^\mtd\|_{L_1(0,T; L_\infty)} \\
  & \leq C T^2 \|v^\mtd\|_{L_\infty(0,T; L_2\cap \dot B^{3-2/p}_{p,p})}^2 + C T \|v^\mtd\|_{L_\infty(0,T;L_2\cap \dot B^{4-2/p}_{p,p})}^2 \leq C T,
\end{split}
\end{equation*}
we integrate on the time variable and set $\|h_0\|_{L_\infty(\R^3)}$ small enough to deduce that
\begin{equation}\label{z2es-key}
\begin{split}
  & \|z^2\|_{L_\infty(0,T; L_2(\R^3))}^2 + \|\nabla z^2\|_{L_2(\R^3\times(0,T))}^2 \\
  \leq \, & C T^{\frac{p-2}{p}}\|A_{v_1}-A_{v_2}\|_{L_\infty(0,T; L_2)}^2 +
  C T^{\frac{2p-5}{2p-3}}\|A_{v_1}-A_{v_2}\|_{L_\infty(0,T; L_2)}^{\frac{2p}{2p-3}}  \|z^2\|_{L_\infty(0,T;L_2)}^{\frac{2p-6}{2p-3}} \\
  & + C \|a_\epsilon \|_{L_{\frac{2p}{2p-3}}(0,T;L_{\frac{2p}{p+2}}(\R^3))}^{\frac{2p}{2p-3}} \|z^2\|_{L_\infty(0,T; L_2(\R^3))}^{\frac{2p-6}{2p-3}}+ CT^{1/2}\|b_\epsilon\|_{L_2(0,T;L_2(\R^3))}\|z^2\|_{L_\infty(0,T;L_2(\R^3))} \\
  & + C \|\nabla z^1\|_{L_2(\R^3\times(0,T))}^2
  + C T \|\delta \bar w\|_{L_\infty(0,T;L_2(\R^3))} \|z^2\|_{L_\infty(0,T;L_2(\R^3))} .
\end{split}
\end{equation}
Recalling \eqref{Av-es3}, \eqref{z1es5} and \eqref{ab-es}, we combine \eqref{z2es-key} with \eqref{z1es-key}-\eqref{z1es5} to get
\begin{equation}\label{z2es-key2}
\begin{split}
  &  \|z^1\|_{L_\infty(0,T;L_2(\R^3))}^2 + \|\nabla z^1\|_{L_2(\R^3\times(0,T))}^2 + \|z^2\|_{L_\infty(0,T; L_2(\R^3))}^2 + \|\nabla z^2\|_{L_2(\R^3\times(0,T))}^2 \\
  \leq \, & C T^{\frac{p-3}{2p-3}} \big(\|\delta \bar w\|_{L_\infty(0,T;L_2(\R^3))}^2 + \|\nabla \delta \bar w\|_{L_2(\R^3\times(0,T))}^2 + \|z^2\|_{L_\infty(0,T;L_2(\R^3))}^2 + \epsilon^2 \big).
\end{split}
\end{equation}
By passing $\epsilon$ to 0 and letting $T>0$ be small enough, we conclude that $z^1=z^2\equiv 0$ and $\delta \bar w =z^1+ z^2\equiv 0$ on $\R^3\times [0,T]$.
In light of \eqref{b-v2des} and \eqref{Xv}, we also get $(\overline{v^\mtd})_1\equiv (\overline{v^\mtd})_2$ and $X_{v_1}(t,y)\equiv X_{v_2}(t,y)$ on $\R^3\times [0,T]$.
Hence, by \eqref{eq:fact}, coming back to the Eulerian coordinates, we infer that $(h_1,w_1)\equiv (h_2, w_2)$ on $\R^3\times [0,T]$,
which corresponds to the uniqueness on the small interval $[0,T]$.

Now suppose that the solution to \eqref{pertS1} is unique on the time interval $[0,T']$, with $T'>0$ a fixed time.
Let $(h_i,w_i,q_i)$ ($i=1,2$) be two solutions to \eqref{pertS1} starting from the same initial data $(h_0, v^\mtd_0, w_0)$, and from the assumption: $(h_1,w_1,q_1)\equiv (h_2,w_2,q_2)$ on $[0,T']$.
We next introduce another Lagrangian coordinate $\widetilde{X}_{v_i}(t,y)$ $(i=1,2)$ defined by
\begin{equation}
  \frac{\dd \widetilde{X}_{v_i}(t,y)}{\dd t} = v_i(t,\widetilde{X}_{v_i}(t,y)),\quad \widetilde{X}_{v_i}(T',y)=y,
\end{equation}
which corresponds to
\begin{equation}
  \widetilde{X}_{v_i}(t,y) = y + \int_{T'}^t v_i(\tau, \widetilde{X}_{v_i}(\tau,y)) \dd \tau.
\end{equation}
In terms of this Lagrangian coordinate, and denoting by
\begin{equation*}
\begin{split}
  & \tilde{h}_i(t,y):= h_i(t, \widetilde{X}_{v_i}(t,y)),\quad \tilde{w}_i(t,y):= w_i(t, \widetilde{X}_{v_i}(t,y)),\quad \\
  & \tilde{q}_i(t,y):= q_i(t,\widetilde{X}_{v_i}(t,y)),\quad \tilde F_i(t,y):= F_i(t, \widetilde{X}_{v_i}(t,y)),
\end{split}
\end{equation*}
the perturbed system \eqref{pertS1} corresponding to $(h_i,w_i)$ ($i=1,2$) can be written as
\begin{equation}\label{LpertS2}
\begin{cases}
  \partial_t\tilde{h}_i =0, \\
  \partial_t\tilde{w}_i -\divg\big( \tilde{A}_{v_i} \tilde{A}_{v_i}^{\textrm{T}} \nabla_y \tilde{w}_i\big) + \tilde{A}_{v_i}^{\textrm{T}}\nabla_y \tilde{q}_i = \tilde F_i, \\
  \divg_y\big( \tilde{A}_{v_i} \tilde{w}_i \big) =0, \\
  \tilde{h}|_{t=T'}= h(T',y),\quad \tilde{w}_i|_{t=T'}= w(T',y),
\end{cases}
\end{equation}
where $h(T',y)=h_1(T',y)=h_2(T',y)$, $w(T',y)=w_1(T',y)=w_2(T',y)$ (from the uniqueness assumption) and
\begin{equation}\label{Av2}
  \tilde{A}_{v_i}(t,y):=(\nabla_y \widetilde{X}_{v_i}(t,y))^{-1}.
\end{equation}
We also set
\begin{equation*}
  (\widetilde{v^\mtd})_i(t,y):= v^\mtd(t,\widetilde{X}_{v_i,\mh}(t,y)), \quad\textrm{and}\quad \tilde v_i(t,y):= v_i(t,\widetilde{X}_{v_i}(t,y)),
\end{equation*}
with $\widetilde{X}_{v_i,\mh}(t,y)=\big( \widetilde{X}_{v_i,1}(t,y), \widetilde{X}_{v_i,2}(t,y)\big)$, then
\begin{equation}\label{tilde-v}
  \tilde v_i(t,y)= (\widetilde{v^\mtd})_i(t,y) + \tilde{w}_i(t,y).
\end{equation}
From the first equation of \eqref{LpertS2}, we see that
\begin{equation}
  \tilde{h}_i(t,y)\equiv h_i(T',y)= h(T',y),\quad \forall t\in [T',T'+T],
\end{equation}
and similarly as \eqref{barFi}, we have
\begin{equation*}\label{tildeFi}
\begin{split}
  \tilde F_i(t,y)
  = & -h(T',y)\, (v^\mtd)_t\left( t,\widetilde{X}_{v_i,\mh}\right) - h(T',y)\, \partial_t\tilde{w}_i\left( t,y\right) \\
  & -h(T',y) \,(v^\mtd_\mh\cdot \nabla_\mh v^\mtd)(t,\widetilde{X}_{v_i,\mh})- \rho(T',y)\, \tilde{w}_{i,\mh}(t,y)\,\cdot (\nabla_\mh v^\mtd)\left( t,\widetilde{X}_{v_i,\mh}\right).
\end{split}
\end{equation*}
The difference equation of $\delta w:=\tilde{w}^1-\tilde{w}^2$ reads as follows
\begin{equation}\label{LpertS-del2}
\begin{cases}
  \delta \tilde{w}_t -\divg\big( \tilde{A}_{v_1} \tilde{A}_{v_1}^{\textrm{T}} \nabla \delta \tilde{w}\big) + \tilde{A}_{v_1}^{\textrm{T}}\nabla \delta \tilde{q} =  \delta \tilde{F} + \divg\Big( (\tilde{A}_{v_1} \tilde{A}_{v_1}^{\textrm{T}} -
  \tilde{A}_{v_2}\tilde{A}_{v_2}^{\textrm{T}})\nabla  \tilde{w}^2 \Big)-(\tilde{A}_{v_1}^{\textrm{T}}-\tilde{A}_{v_2}^{\textrm{T}})\nabla  \tilde{q}_2 , \\
  \divg_y\left( \tilde{A}_{v_1} \delta\tilde{w} \right) =\divg\big((\tilde{A}_{v_1}-\tilde{A}_{v_2})\tilde{w}^2\big) , \\
  \delta \tilde{w}|_{t=T'}=0,
\end{cases}
\end{equation}
where $\delta \tilde q := \tilde q_1 - \tilde q_2$, and $\delta\tilde F:= \tilde F_1 - \tilde F_2$ is decomposed as $\sum_{i=1}^5 \delta\tilde{F}^i$ with
\begin{equation*}
\begin{split}
  & \delta\tilde{F}^1=- h(T',y) \big(\partial_t v^\mtd(t, \widetilde{X}_{v_1,\mh}) - \partial_t v^\mtd(t,\widetilde{X}_{v_2,\mh})\big),\qquad \delta \tilde{F}^2 = h(T',y) \delta \tilde{w}_t, \\
  &\delta\tilde{F}^3= - h(T',y) \big((v^\mtd_\mh\cdot \nabla_\mh v^\mtd)(t,\widetilde{X}_{v_1,\mh})-(v^\mtd_\mh\cdot \nabla_\mh v^\mtd)(t,\widetilde{X}_{v_2,\mh}) \big), \\
  &\delta \tilde{F}^4 = - \rho(T',y)\, \delta\widetilde{w_\mh}(t,y)\,\cdot \nabla_\mh v^\mtd\left( t,\widetilde{X}_{v_1,\mh}\right), \\
  &\delta \tilde{F}^5 =- \rho(T',y)\, \tilde{w}_{2,\mh}(t,y)\,\cdot \big(\nabla_\mh v^\mtd( t,\widetilde{X}_{v_1,\mh})-\nabla_\mh v^\mtd( t,\widetilde{X}_{v_2,\mh}) \big).
\end{split}
\end{equation*}
We split $\delta\tilde{w}$ into two parts
\begin{equation}\label{z1z2-2}
  \delta \tilde w = \tilde z^1 + \tilde z^2,
\end{equation}
where $\tilde z^1$ is given as the solution to the following equation (from Lemma \ref{lem:divEq})
\begin{equation}\label{z1def2}
  \divg (\tilde{A}_{v_1} \tilde{z}^1)= \divg((\tilde{A}_{v_1}-\tilde{A}_{v_2})\tilde{w}_2) = (\tilde{A}_{v_1}-\tilde{A}_{v_2}):\nabla_y \tilde{w}_2. 
\end{equation}
In view of \eqref{eq:Lip} and \eqref{eq:fact}-\eqref{eq:fact4}, we have that for $T>0$ small enough,
\begin{equation}\label{Xv-Linf}
  \int_{T'}^{T'+T} \|\nabla_y \tilde{v}_i(\tau)\|_{L_\infty(\R^3)}\dd \tau
  \leq \int_{T'}^{T'+T} \|\nabla_y \widetilde{X}_{v_i}(\tau)\|_{L_\infty}\|\nabla v_i(\tau)\|_{L_\infty(\R^3)} \dd \tau\leq \min\big\{\frac{c}{2},\frac{1}{4} \big\},
\end{equation}
and
\begin{equation*}
  \|\mathrm{Id}- \tilde{A}_{v_i}(t,y)\|_{L_\infty(T',T'+T;L_\infty(\R^3))} \leq 2 \int_{T'}^{T'+T} \|\nabla\tilde{v}_i(\tau)\|_{L_\infty(\R^3)} \dd \tau \leq \min\big\{c, \frac{1}{2}\big\},
\end{equation*}
and
\begin{equation*}
  \|(\tilde{A}_{v_i})_t\|_{L_2(T',T'+T;L_\infty(\R^3))}\leq C \|\nabla_y \tilde v_i\|_{L_2(T',T'+T;L_\infty(\R^3))} \leq c.
\end{equation*}
By arguing as the above deduction on the small interval $[0,T]$, and from $\|h(T')\|_{L_\infty}\leq \|h_0\|_{L_\infty} \ll1$, we find that
\begin{equation*}
\begin{split}
  &  \|\tilde{z}^1\|_{L_\infty(T',T'+T;L_2(\R^3))}^2 + \|\nabla \tilde{z}^1\|_{L_2(\R^3\times(T',T'+T))}^2
  + \|\tilde{z}^2\|_{L_\infty(T',T'+T; L_2(\R^3))}^2 + \|\nabla \tilde{z}^2\|_{L_2(\R^3\times(T',T'+T))}^2 \\
  & \leq \, C T^{\frac{p-3}{2p-3}} \big(\|\delta \tilde w\|_{L_\infty(T',T'+T;L_2(\R^3))}^2 + \|\nabla \delta \tilde w\|_{L_2(\R^3\times(T',T'+T))}^2
  + \|\tilde{z}^2\|_{L_\infty(T',T'+T;L_2(\R^3))}^2 \big).
\end{split}
\end{equation*}

Therefore, for any large number $T_*>T'>0$, there exists a sufficiently small constant $T>0$ depending only on the initial data and $T_*$
so that $\tilde{z}^1=\tilde{z}^2\equiv 0$ and $\delta\tilde{w}\equiv 0$ on $\R^3\times [T',T'+T]$, and moreover $\widetilde{X}_{v_1}(t,y)\equiv \widetilde{X}_{v_2}(t,y)$,
which combined with \eqref{Xv-Linf} implies $(h_1,w_1)\equiv (h_2,w_2)$ on $\R^3\times [T',T'+T]$.
By standard connectivity, we get $\delta w\equiv 0$ on $\R^3\times [0,T_*]$ and from the arbitraries of $T_*$, we conclude the uniqueness on the whole $\R^3\times [0,\infty)$.

\section{Appendix}

First, we give the proof of the energy type estimates \eqref{L2es2d-2}-\eqref{L2es2d-4} appearing in Lemma \ref{lem:2DNS-L2}.
\begin{proof}[Proof of \eqref{L2es2d-2}-\eqref{L2es2d-4} in Lemma \ref{lem:2DNS-L2}]
We take the inner product of the 2D (HNS) system \eqref{HNS} with the vector $\partial_t v^\mtd$,
and by the divergence-free property of $v^\mtd_\mh$ and the integration by parts, we get
\begin{equation*}
  \|\partial_t v^\mtd\|_{L_2}^2 + \frac{1}{2}\frac{\dd }{\dd t}\|\nabla_\mh v^\mtd\|_{L_2}^2 =
  \left|\int_{\R^2}  (v^\mtd_\mh \cdot\nabla_\mh v^\mtd) \cdot \partial_t v^\mtd \dd x \right| .
\end{equation*}
Multiplying both sides of the above equality with $t$ and integrating on the time interval $[0,t]$, we use H\"older's inequality to find
\begin{equation*}
  \frac{1}{2}t \|\nabla_\mh v^\mtd\|_{L^2}^2 - \frac{1}{2} \int_0^t \|\nabla_\mh v^\mtd\|_{L_2}^2\dd \tau + \int_0^t \tau \|\partial_\tau v^\mtd\|_{L_2}^2\dd \tau
  \leq \int_0^t \tau \|\nabla_\mh v^\mtd_\mh\|_{L_4} \|v^\mtd\|_{L_4} \|\partial_\tau v^\mtd\|_{L_2} \dd \tau.
\end{equation*}
Applying the interpolation inequality \eqref{eq:IntP}
and Young's inequality, it follows that
\begin{equation*}
\begin{split}
  \frac{1}{2}t \|\nabla_\mh v^\mtd\|_{L^2}^2 + \int_0^t \tau \|\partial_\tau v^\mtd\|_{L_2}^2\dd \tau
  \leq \frac{1}{2}\|v^\mtd_0\|_{L_2}^2 + C \int_0^t \tau\|v^\mtd\|_{L_2}^{1/2} \|\nabla_\mh v^\mtd\|_{L_2} \|\nabla_\mh^2 v^\mtd\|_{L_2}^{1/2} \|\partial_\tau v^\mtd\|_{L_2} \dd \tau & \\
  \leq \frac{1}{2}\|v^\mtd_0\|_{L_2}^2 + C\|v^\mtd_0\|_{L_2} \int_0^t \tau \|\nabla_\mh v^\mtd\|_{L_2}^2 \|\nabla_\mh^2 v^\mtd\|_{L_2} \dd \tau + \frac{1}{2}\int_0^t\tau \|\partial_\tau v^\mtd\|_{L_2}^2 \dd \tau ,&
\end{split}
\end{equation*}
and thus
\begin{equation*}
\begin{split}
  t \|\nabla_\mh v^\mtd\|_{L^2}^2 + \int_0^t \tau \|\partial_\tau v^\mtd\|_{L_2}^2\dd \tau
  \leq \|v^\mtd_0\|_{L_2}^2 + C\|v^\mtd_0\|_{L_2} \int_0^t \tau \|\nabla_\mh v^\mtd\|_{L_2}^2 \|\nabla_\mh^2 v^\mtd\|_{L_2} \dd \tau.
\end{split}
\end{equation*}
We write the (HNS) system \eqref{HNS} as
\begin{equation}\label{eq:HNS-3}
  \Delta_\mh v^\mtd + \nabla p^\mtd = -\partial_t v^\mtd - v^\mtd_\mh\cdot\nabla_\mh v^\mtd,
\end{equation}
and from the classical property of the Stokes system, we infer that
\begin{equation*}
\begin{split}
  \|\nabla_\mh^2 v^\mtd(t)\|_{L_2} + \|\nabla_\mh p^\mtd(t)\|_{L_2} & \leq C \|\partial_t v^\mtd(t)\|_{L_2} + C \|v^\mtd_\mh \cdot\nabla_\mh v^\mtd(t)\|_{L_2} \\
  & \leq C \|\partial_t v^\mtd(t)\|_{L_2} + C \|v^\mtd(t)\|_{L_2}^{1/2} \|\nabla_\mh v^\mtd(t)\|_{L_2} \|\nabla^2_\mh v^\mtd(t)\|_{L_2}^{1/2} \\
  & \leq C \|\partial_t v^\mtd(t)\|_{L_2} + C \|v^\mtd_0\|_{L_2} \|\nabla_\mh v^\mtd\|_{L_2}^2 + \frac{1}{2} \|\nabla_\mh^2 v^\mtd(t)\|_{L_2} ,
\end{split}
\end{equation*}
which implies that
\begin{equation}\label{eq:StokesEs}
    \|\nabla_\mh^2 v^\mtd(t)\|_{L_2} + \|\nabla_\mh p^\mtd(t)\|_{L_2} \leq C \|\partial_t v^\mtd(t)\|_{L_2} + C \|v^\mtd_0\|_{L_2} \|\nabla_\mh v^\mtd\|_{L_2}^2 .
\end{equation}
Thus we obtain
\begin{align*}
  &  t \|\nabla_\mh v^\mtd(t)\|_{L^2}^2 + \int_0^t \tau \|\partial_\tau v^\mtd(\tau)\|_{L_2}^2\dd \tau \\
  \leq &
  \|v^\mtd_0\|_{L_2}^2 + C \|v^\mtd_0\|_{L_2} \int_0^t \big(\tau \|\nabla_\mh v^\mtd(\tau)\|_{L_2}^2\big) \|\partial_\tau v^\mtd\|_{L_2} \dd \tau
  + C \|v^\mtd_0\|_{L_2}^2 \int_0^t \tau \|\nabla_\mh v^\mtd(\tau)\|_{L_2}^4 \dd \tau \\
  \leq & \|v^\mtd_0\|_{L_2}^2 + C \|v^\mtd_0\|_{L_2}^2 \int_0^t \big(\tau \|\nabla_\mh v^\mtd(\tau)\|_{L_2}^2\big) \|\nabla_\mh v^\mtd(\tau)\|_{L_2}^2 \dd \tau
  + \frac{1}{2}\int_0^t \tau \|\partial_\tau v^\mtd(\tau)\|_{L_2}^2\dd \tau.
\end{align*}
Gr\"onwall's inequality and \eqref{L2es2D} lead to that
\begin{equation}\label{L2es2d-2-1}
  t \|\nabla_\mh v^\mtd(t)\|_{L_2(\R^2)}^2 + \int_0^t \tau  \|\partial_\tau v^\mtd\|_{L_2(\R^2)}^2 \dd \tau
  \leq  C \|v^\mtd_0\|_{L_2(\R^2)}^2 e^{C \|v^\mtd_0\|_{L_2(\R^2)}^4}.
\end{equation}
Together with \eqref{eq:StokesEs}, we deduce
\begin{align}\label{L2es2d-2-2}
  \int_0^t \tau \|\nabla_\mh^2 v^\mtd(\tau)\|_{L_2}^2 \dd \tau & \leq C \int_0^t \tau \|\partial_\tau v^\mtd\|_{L_2}^2 \dd \tau
  + C \|v^\mtd_0\|_{L_2}^2 \sup_{\tau\in[0,t]}\Big(\tau \|\nabla_\mh v^\mtd(\tau)\|_{L_2}^2 \Big)\int_0^t \|\nabla_\mh v^\mtd(\tau)\|_{L_2}^2 \dd \tau \nonumber \\
  & \leq C ( \|v^\mtd_0\|_{L_2}^2 + \|v^\mtd_0\|_{L_2}^6) e^{C\|v^\mtd_0\|_{L_2}^4}  \leq C \|v^\mtd_0\|_{L_2(\R^2)}^2 e^{C\|v^\mtd_0\|_{L_2(\R^2)}^4} .
\end{align}
Combining \eqref{L2es2d-2-2} with \eqref{L2es2d-2-1} yields the desired estimate \eqref{L2es2d-2}.
\vskip0.1cm

Now we turn to \eqref{L2es2d-3}. Observing that
\begin{equation}\label{eq:HNS-2}
  \partial_{tt} v^\mtd +  v^\mtd_\mh\cdot\nabla_\mh (\partial_t v^\mtd) -\Delta_\mh (\partial_t v^\mtd) + \nabla(\partial_t p^\mtd)=- \partial_t v^\mtd_\mh \cdot\nabla_\mh v^\mtd£¬
\end{equation}
and taking the inner product with $\partial_t v^\mtd$, we have
\begin{align*}
  \frac{1}{2}\frac{\dd}{\dd t} \|\partial_t v^\mtd\|_{L_2}^2 + \|\nabla_\mh \partial_t v^\mtd\|_{L_2}^2 = \left| \int_{\R^2} (\partial_t v^\mtd_\mh \cdot \nabla_\mh v^\mtd) \cdot \partial_t v^\mtd \dd x \right|.
\end{align*}
By multiplying both sides of the above equation with $t^2$ and integrating on the time variable, we get
\begin{align*}
  \frac{1}{2} t^2 \|\partial_t v^\mtd\|_{L_2}^2 & + \int_0^t \tau^2 \|\nabla_\mh \partial_\tau v^\mtd\|_{L_2}^2 \dd \tau
  \leq \int_0^t \tau \|\partial_\tau v^\mtd\|_{L_2}^2 \dd \tau
  + \int_0^t \tau^2 \|\partial_\tau v^\mtd\|_{L_4}^2 \|\nabla_\mh v^\mtd\|_{L_2} \dd \tau \\
  & \leq C\|v^\mtd_0\|_{L_2}^2 e^{C\|v^\mtd_0\|_{L_2}^4} + \int_0^t \tau^2 \|\partial_\tau v^\mtd\|_{L_2} \|\nabla_\mh \partial_\tau v^\mtd\|_{L_2} \|\nabla_\mh v^\mtd\|_{L_2} \dd \tau \\
  & \leq C\|v^\mtd_0\|_{L_2}^2 e^{C\|v^\mtd_0\|_{L_2}^4} + \int_0^t \tau^2 \|\partial_\tau v^\mtd\|_{L_2}^2 \|\nabla_\mh v^\mtd\|_{L_2}^2 \dd \tau
  + \frac{1}{2} \int_0^t \tau^2\|\nabla_\mh \partial_\tau v^\mtd\|_{L_2}^2 \dd\tau.
\end{align*}
Gr\"onwall's inequality guarantees that
\begin{equation}\label{L2es2d-3-2}
   t^2 \|\partial_t v^\mtd\|_{L_2}^2  + \int_0^t \tau^2 \|\nabla_\mh \partial_\tau v^\mtd\|_{L_2}^2 \dd \tau
   \leq C\|v^\mtd_0\|_{L_2}^2 e^{C\|v^\mtd_0\|_{L_2}^4+ C\|v_0\|_{L_2}^2 } \leq C\|v^\mtd_0\|_{L_2}^2 e^{C(1+\|v^\mtd_0\|_{L_2}^4)}.
\end{equation}
By virtue of \eqref{eq:StokesEs} and \eqref{L2es2D}, we see that
\begin{equation}
\begin{split}
  t \|\nabla_\mh^2 v^\mtd(t)\|_{L_2}   \leq C \big(t \|\partial_t v^\mtd(t)\|_{L_2}\big) + C \|v^\mtd_0\|_{L_2} \big(t\|\nabla_\mh v^\mtd\|_{L_2}^2\big)
  \leq C  \|v^\mtd_0\|_{L_2}^2 e^{C(1+\|v^\mtd_0\|_{L_2}^4)},
\end{split}
\end{equation}
which combined with \eqref{L2es2d-3-2} yields \eqref{L2es2d-3}, as desired.
\vskip0.1cm

Next we treat \eqref{L2es2d-4}. Differentiating the equation \eqref{eq:HNS-2} leads to
\begin{equation}\label{eq:HNS-4}
\begin{split}
  \partial_{tt} \nabla_\mh v^\mtd +  v^\mtd_\mh\cdot\nabla_\mh (\nabla_\mh\partial_t v^\mtd) & -\Delta_\mh (\nabla_\mh\partial_t v^\mtd) + \nabla(\nabla_\mh\partial_t p^\mtd)\\
  & = - \nabla_\mh v^\mtd_\mh \cdot\nabla_\mh \partial_t v^\mtd - \nabla_\mh \partial_t v^\mtd_\mh \cdot\nabla_\mh v^\mtd -\partial_t v^\mtd_\mh \cdot \nabla_\mh^2 v^\mtd.
\end{split}
\end{equation}
By taking the inner product of this equation with $\nabla_\mh\partial_t v^\mtd$, and using H\"older's inequality and the integration by parts, we have
\begin{align}\label{eq:est1}
  \frac{1}{2}\frac{\dd}{\dd t} \|\nabla_\mh \partial_t v^\mtd\|_{L_2}^2 + \|\nabla_\mh^2 \partial_t v^\mtd\|_{L_2}^2
  \leq 2  \|\nabla_\mh \partial_t v^\mtd \|_{L_4}^2 \|\nabla_\mh v^\mtd \|_{L_2} + \|\partial_t v^\mtd\|_{L_4} \|\nabla_\mh v^\mtd\|_{L_4} \|\nabla_\mh^2 \partial_t v^\mtd\|_{L_2}.
\end{align}
Multiplying both sides with $t^3$ and integrating on the time variable, we find
\begin{align*}
  & \frac{1}{2} t^3 \|\nabla_\mh\partial_t v^\mtd\|_{L_2}^2 + \int_0^t \tau^3 \|\nabla_\mh^2 \partial_\tau v^\mtd\|_{L_2}^2 \dd \tau \\
  \leq & \frac{3}{2}\int_0^t \tau^2 \|\nabla_\mh\partial_\tau v^\mtd\|_{L_2}^2 \dd \tau
  +2 \int_0^t \tau^3 \|\nabla_\mh\partial_\tau v^\mtd\|_{L_4}^2 \|\nabla_\mh v^\mtd\|_{L_2} \dd \tau
  + \int_0^t \tau^3  \|\partial_\tau v^\mtd\|_{L_4} \|\nabla_\mh v^\mtd\|_{L_4} \|\nabla_\mh^2 \partial_\tau v^\mtd\|_{L_2} \dd \tau  \\
  \leq & C\|v^\mtd_0\|_{L_2}^2 e^{C(1+\|v^\mtd_0\|_{L_2}^4)}
  + C \int_0^t \tau^3 \|\nabla_\mh\partial_\tau v^\mtd\|_{L_2} \|\nabla_\mh^2 \partial_\tau v^\mtd\|_{L_2} \|\nabla_\mh v^\mtd\|_{L_2} \dd \tau  \\
  & + C\int_0^t \tau^3  \|\partial_\tau v^\mtd\|_{L_2}^{\frac{1}{2}} \|\nabla_\mh \partial_\tau v^\mtd\|_{L_2}^{\frac{1}{2}} \|\nabla_\mh v^\mtd\|_{L_2}^{\frac{1}{2}}
  \|\nabla_\mh^2 v^\mtd\|_{L_2}^{\frac{1}{2}} \|\nabla_\mh^2 \partial_\tau v^\mtd\|_{L_2} \dd \tau \\
  \leq & C\|v^\mtd_0\|_{L_2}^2 e^{C(1+\|v^\mtd_0\|_{L_2}^4)}
  + \frac{1}{2}\int_0^t \tau^3 \|\nabla_\mh^2 \partial_\tau v^\mtd\|_{L_2}^2 \dd \tau
  + C \int_0^t \tau^3 \|\nabla_\mh\partial_\tau v^\mtd\|_{L_2}^2\|\nabla_\mh v^\mtd\|_{L_2}^2 \dd \tau  \\
  & + C \int_0^t \tau^3  \|\partial_\tau v^\mtd\|_{L_2} \|\nabla_\mh \partial_\tau v^\mtd\|_{L_2} \|\nabla_\mh v^\mtd\|_{L_2}
  \|\nabla_\mh^2 v^\mtd\|_{L_2} \dd \tau .
\end{align*}
It is clear to see that
\begin{align*}
  & \int_0^t \tau^3  \|\partial_\tau v^\mtd\|_{L_2} \|\nabla_\mh \partial_\tau v^\mtd\|_{L_2} \|\nabla_\mh v^\mtd\|_{L_2}
  \|\nabla_\mh^2 v^\mtd\|_{L_2} \dd \tau \\
  \leq & \int_0^t \tau^4  \|\partial_\tau v^\mtd\|_{L_2}^2 \|\nabla_\mh v^\mtd\|_{L_2}^2 \|\nabla_\mh^2 v^\mtd\|_{L_2}^2 \dd \tau
  + \int_0^t \tau^2\|\nabla_\mh \partial_\tau v^\mtd\|_{L_2}^2 \dd \tau \\
  \leq & \sup_{\tau \in[0,t]} \big( \tau^2\|\partial_\tau v^\mtd\|_{L_2}^2\big) \sup_{\tau \in[0,t]} \big( \tau^2\|\nabla_\mh^2 v^\mtd\|_{L_2}^2\big)
  \int_0^t \|\nabla_\mh v^\mtd\|_{L_2}^2\dd\tau + C  \|v^\mtd_0\|_{L_2}^2 e^{C(1+\|v^\mtd_0\|_{L_2}^4)} \\
  \leq & C \|v^\mtd_0\|_{L_2}^2 \big(\|v^\mtd_0\|_{L_2}^4 + 1\big) e^{C(1+\|v^\mtd_0\|_{L_2}^4)}
  \leq C \|v^\mtd_0\|_{L_2}^2 e^{C(1+\|v^\mtd_0\|_{L_2}^4)} .
\end{align*}
We use Gr\"onwall's inequality to conclude that
\begin{equation}\label{L2es2d-4-1}
  t^3 \|\nabla_\mh\partial_t v^\mtd\|_{L_2}^2 + \int_0^t \tau^3 \|\nabla_\mh^2 \partial_\tau v^\mtd\|_{L_2}^2 \dd \tau
  \leq C \|v^\mtd_0\|_{L_2}^2 e^{C(1+\|v^\mtd_0\|_{L_2}^4)}. 
\end{equation}
From \eqref{eq:HNS-3} we get
\begin{equation*}
  \Delta_\mh \nabla_\mh v^\mtd - \nabla (\nabla_\mh p^\mtd) = \nabla_\mh \partial_t v^\mtd + (\nabla_\mh v^\mtd_\mh) \cdot \nabla_\mh v^\mtd
  + v^\mtd_\mh \cdot \nabla_\mh (\nabla_\mh v^\mtd),
\end{equation*}
and the classical estimate of Stokes system ensures that
\begin{align*}
  \|\nabla_\mh^3 v^\mtd\|_{L_2} & + \|\nabla_\mh^2 p^\mtd\|_{L_2} \leq C \|\nabla_\mh \partial_t v^\mtd\|_{L_2}
  + C \|\nabla_\mh v^\mtd\|_{L_4}^2 + C \|v^\mtd_\mh\|_{L_4} \|\nabla_\mh^2 v^\mtd\|_{L_4} \\
  & \leq C \|\nabla_\mh \partial_t v^\mtd\|_{L_2}
  + C \|\nabla_\mh v^\mtd\|_{L_2}^{\frac{3}{2}} \|\nabla_\mh^3 v^\mtd\|_{L_2}^{\frac{1}{2}}
  + C \|v^\mtd_\mh\|_{L_2}^{\frac{1}{2}} \|\nabla_\mh v^\mtd\|_{L_2}^{\frac{1}{2}} \|\nabla_\mh^2 v^\mtd\|_{L_2}^{\frac{1}{2}} \|\nabla_\mh^3 v^\mtd\|_{L_2}^{\frac{1}{2}} \\
  & \leq C \|\nabla_\mh \partial_t v^\mtd\|_{L_2} + \frac{1}{2} \|\nabla_\mh^3 v^\mtd\|_{L_2}
  + C \|\nabla_\mh v^\mtd\|_{L_2}^3 + C \|v^\mtd_0\|_{L_2}\|\nabla_\mh v^\mtd\|_{L_2} \|\nabla_\mh^2 v^\mtd\|_{L_2}.
\end{align*}
Combining the above estimate with \eqref{L2es2d-2}, \eqref{L2es2d-3} and \eqref{L2es2d-4-1} leads to
\begin{align}\label{L2es2d-4-2}
  t^{\frac{3}{2}}\|\nabla_\mh^3 v^\mtd(t)\|_{L_2} + t^{\frac{3}{2}} \|\nabla_\mh^2 p^\mtd(t)\|_{L_2}
  \leq C \|v^\mtd_0\|_{L_2}^2 e^{C(1+\|v^\mtd_0\|_{L_2}^4)}.
\end{align}
\vskip0.1cm

We then differentiate \eqref{eq:HNS-4} to get
\begin{equation}\label{eq:HNS-4}
\begin{split}
  \partial_{tt} (\nabla_\mh^2 v^\mtd) +  v^\mtd_\mh\cdot\nabla_\mh (\nabla_\mh^2\partial_t v^\mtd) -& \Delta_\mh (\nabla_\mh^2\partial_t v^\mtd) + \nabla(\nabla_\mh^2\partial_t p^\mtd)\\
  =& -2 \nabla_\mh v^\mtd_\mh \cdot\nabla_\mh^2 \partial_t v^\mtd - \nabla_\mh^2 v^\mtd_\mh \cdot\nabla_\mh \partial_t v^\mtd- \nabla_\mh^2 \partial_t v^\mtd_\mh \cdot\nabla_\mh v^\mtd \\
  & -2 \nabla_\mh \partial_t v^\mtd_\mh \cdot\nabla_\mh^2 v^\mtd - \partial_t v^\mtd_\mh \cdot \nabla_\mh^3 v^\mtd.
\end{split}
\end{equation}
Similarly as obtaining \eqref{eq:est1}, we get
\begin{align*}
  \frac{1}{2}\frac{\dd}{\dd t} \|\nabla_\mh^2 \partial_t v^\mtd\|_{L_2}^2 + \|\nabla_\mh^3 \partial_t v^\mtd\|_{L_2}^2
  \leq & 3 \|\nabla_\mh^2 \partial_t v^\mtd \|_{L_4}^2 \|\nabla_\mh v^\mtd \|_{L_2} + 2\|\partial_t v^\mtd\|_{L_4} \|\nabla_\mh^2 v^\mtd\|_{L_4} \|\nabla_\mh^3 \partial_t v^\mtd\|_{L_2} \\
  & + 2 \|\nabla_\mh \partial_t v^\mtd\|_{L_4} \|\nabla_\mh v^\mtd\|_{L_4} \|\nabla_\mh^3 \partial_t v^\mtd\|_{L_2} .
\end{align*}
We multiply both sides with $t^4$ and integrate on the time variable, and it follows that
\begin{align*}
  & \frac{1}{2} t^4 \|\nabla_\mh^2 \partial_t v^\mtd\|_{L_2}^2 + \int_0^t \tau^4 \|\nabla_\mh^3 \partial_\tau v^\mtd\|_{L_2}^2 \dd \tau \\
  \leq & 2\int_0^t \tau^3 \|\nabla_\mh^2 \partial_\tau v^\mtd\|_{L_2}^2 \dd \tau
  +3 \int_0^t \tau^4 \|\nabla_\mh^2 \partial_\tau v^\mtd\|_{L_4}^2 \|\nabla_\mh v^\mtd\|_{L_2} \dd \tau \\
  & + 2 \int_0^t \tau^4  \|\partial_\tau v^\mtd\|_{L_4} \|\nabla_\mh^2 v^\mtd\|_{L_4} \|\nabla_\mh^3 \partial_\tau v^\mtd\|_{L_2} \dd \tau
  +  2 \int_0^t \tau^4  \|\nabla_\mh \partial_\tau v^\mtd\|_{L_4} \|\nabla_\mh v^\mtd\|_{L_4} \|\nabla_\mh^2 \partial_\tau v^\mtd\|_{L_2} \dd \tau  \\
  \leq & C\|v^\mtd_0\|_{L_2}^2 e^{C(1+\|v^\mtd_0\|_{L_2}^4)}
  + C \int_0^t \tau^4 \|\nabla_\mh^2 \partial_\tau v^\mtd\|_{L_2} \|\nabla_\mh^3 \partial_\tau v^\mtd\|_{L_2} \|\nabla_\mh v^\mtd\|_{L_2} \dd \tau  \\
  & + C \int_0^t \tau^4  \|\partial_\tau v^\mtd\|_{L_2}^{\frac{1}{2}} \|\nabla_\mh \partial_\tau v^\mtd\|_{L_2}^{\frac{1}{2}} \|\nabla_\mh^2 v^\mtd\|_{L_2}^{\frac{1}{2}}
  \|\nabla_\mh^3 v^\mtd\|_{L_2}^{\frac{1}{2}} \|\nabla_\mh^3 \partial_\tau v^\mtd\|_{L_2} \dd \tau \\
  & + C \int_0^t \tau^4  \|\nabla_\mh\partial_\tau v^\mtd\|_{L_2}^{\frac{1}{2}} \|\nabla_\mh^2 \partial_\tau v^\mtd\|_{L_2}^{\frac{1}{2}} \|\nabla_\mh v^\mtd\|_{L_2}^{\frac{1}{2}}
  \|\nabla_\mh^2 v^\mtd\|_{L_2}^{\frac{1}{2}} \|\nabla_\mh^3 \partial_\tau v^\mtd\|_{L_2} \dd \tau \\
  \leq & C\|v^\mtd_0\|_{L_2}^2 e^{C(1+\|v^\mtd_0\|_{L_2}^4)}
  + \frac{1}{2}\int_0^t \tau^4 \|\nabla_\mh^3 \partial_\tau v^\mtd\|_{L_2}^2 \dd \tau
  + C \int_0^t \tau^4 \|\nabla_\mh^2 \partial_\tau v^\mtd\|_{L_2}^2\|\nabla_\mh v^\mtd\|_{L_2}^2 \dd \tau  \\
  & + C \int_0^t \tau^4  \|\partial_\tau v^\mtd\|_{L_2} \|\nabla_\mh \partial_\tau v^\mtd\|_{L_2} \|\nabla_\mh^2 v^\mtd\|_{L_2} \|\nabla_\mh^3 v^\mtd\|_{L_2} \dd \tau \\
  & + C \int_0^t \tau^4  \|\nabla_\mh\partial_\tau v^\mtd\|_{L_2} \|\nabla_\mh^2 \partial_\tau v^\mtd\|_{L_2} \|\nabla_\mh v^\mtd\|_{L_2} \|\nabla_\mh^2 v^\mtd\|_{L_2} \dd \tau  .
\end{align*}
Noting that
\begin{align*}
   & \int_0^t \tau^4  \|\partial_\tau v^\mtd\|_{L_2} \|\nabla_\mh \partial_\tau v^\mtd\|_{L_2} \|\nabla_\mh^2 v^\mtd\|_{L_2} \|\nabla_\mh^3 v^\mtd\|_{L_2} \dd \tau \\
   \leq & \int_0^t \tau^6 \|\partial_\tau v^\mtd\|_{L_2}^2 \|\nabla_\mh^2 v^\mtd\|_{L_2}^2 \|\nabla_\mh^3 v^\mtd\|_{L_2}^2  \dd \tau
  + \int_0^t \tau^2\|\nabla_\mh \partial_\tau v^\mtd\|_{L_2}^2 \dd \tau \\
  \leq & \sup_{\tau \in[0,t]} \big( \tau^2\|\partial_\tau v^\mtd\|_{L_2}^2\big) \sup_{\tau \in[0,t]} \big( \tau^3\|\nabla_\mh^3 v^\mtd\|_{L_2}^2\big)
  \int_0^t \tau \|\nabla_\mh^2 v^\mtd\|_{L_2}^2\dd\tau + C  \|v^\mtd_0\|_{L_2}^2 e^{C(1+\|v^\mtd_0\|_{L_2}^4)} \\
  \leq & C \|v^\mtd_0\|_{L_2}^2 \big(\|v^\mtd_0\|_{L_2}^4 + 1\big) e^{C(1+\|v^\mtd_0\|_{L_2}^4)}
  \leq C \|v^\mtd_0\|_{L_2}^2 e^{C(1+\|v^\mtd_0\|_{L_2}^4)} ,
\end{align*}
and
\begin{align*}
   & \int_0^t \tau^4  \|\nabla_\mh\partial_\tau v^\mtd\|_{L_2} \|\nabla_\mh^2 \partial_\tau v^\mtd\|_{L_2} \|\nabla_\mh v^\mtd\|_{L_2} \|\nabla_\mh^2 v^\mtd\|_{L_2} \dd \tau \\
   \leq & \sup_{\tau \in[0,t]} \big( \tau^2\|\nabla_\mh^2 v^\mtd\|_{L_2}^2\big) \sup_{\tau \in[0,t]} \big( \tau^3\|\nabla_\mh \partial_\tau v^\mtd\|_{L_2}^2\big)
  \int_0^t \|\nabla_\mh v^\mtd\|_{L_2}^2\dd\tau
  + \int_0^t \tau^3\|\nabla_\mh^2 \partial_\tau v^\mtd\|_{L_2}^2 \dd \tau \\
  \leq & C \|v^\mtd_0\|_{L_2}^2 e^{C(1+\|v^\mtd_0\|_{L_2}^4)} ,
\end{align*}
Gr\"onwall's inequality directly yields
\begin{equation}
  t^4 \|\nabla_\mh^2 \partial_t v^\mtd\|_{L_2}^2 + \int_0^t \tau^4 \|\nabla_\mh^3 \partial_\tau v^\mtd\|_{L_2}^2 \dd \tau \leq C \|v^\mtd_0\|_{L_2}^2 e^{C(1+\|v^\mtd_0\|_{L_2}^4)},
\end{equation}
which combined with \eqref{L2es2d-4-2} concludes the desired estimate \eqref{L2es2d-4}.
\end{proof}

Next, we show the refined regularity estimate for the solution $v^\mtd$ other than \eqref{v2d-es1}.

\begin{lemma}\label{lem:2DNS-2.2}
  Let $v^\mtd_0\in L_2\cap \dot B^{4-2/p}_{p,p}(\R^2)$ with $p>3$, then the solution $v^\mtd =(v^\mtd_\mh, v^\mtd_3)$ of the 2D (HNS) system \eqref{HNS} satisfies
\begin{equation}\label{v2d-es2}
\begin{split}
  \sup_{t\geq 0}\|v^\mtd \|_{\dot B^{4-2/p}_{p,p}(\R^2)} + \|\partial_t v^\mtd, \nabla^2_\mh v^\mtd\|_{L_p(\R^+; W^2_p(\R^2))}
  \leq\,   C \Big( \|v^\mtd_0\|_{L_2\cap \dot B^{4-2/p}_{p,p}(\R^2)}^6 + 1\Big) e^{C(1+\|v^\mtd_0\|_{L_2}^4)}.
\end{split}
\end{equation}
\end{lemma}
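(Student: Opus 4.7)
The plan is to bootstrap from Lemma \ref{lem:2DNS-2} by pushing the argument one derivative higher. Since $\dot B^{4-2/p}_{p,p} \hookrightarrow \dot B^{3-2/p}_{p,p}$, the hypotheses of Lemma \ref{lem:2DNS-2} are satisfied and we may freely use \eqref{bomgLpes2}, \eqref{nab-v3Lp}, \eqref{estma0}, \eqref{fact4} and \eqref{v2d3-es2} as starting estimates, together with the $L_\infty$--decay bounds \eqref{decHNS2d1}--\eqref{decHNS2d4} from Lemma \ref{lem:2DNS-L2}. First, by applying $\nabla_\mh^2$ to the 2D (HNS) system \eqref{HNS} one obtains
\begin{equation*}
\partial_t(\nabla_\mh^2 v^\mtd) - \Delta_\mh (\nabla_\mh^2 v^\mtd) + \nabla(\nabla_\mh^2 p^\mtd)
= - v^\mtd_\mh \cdot \nabla_\mh(\nabla_\mh^2 v^\mtd) - 2(\nabla_\mh v^\mtd_\mh)\cdot \nabla_\mh(\nabla_\mh v^\mtd) - (\nabla_\mh^2 v^\mtd_\mh)\cdot\nabla_\mh v^\mtd,
\end{equation*}
which, since $\divg_\mh v^\mtd_\mh = 0$, is a Stokes system with initial data $\nabla_\mh^2 v^\mtd_0 \in \dot B^{2-2/p}_{p,p}(\R^2)$. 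Applying Lemma \ref{lem:StokesR} gives
\begin{equation*}
\sup_{t\geq 0}\|\nabla_\mh^2 v^\mtd\|_{\dot B^{2-2/p}_{p,p}} + \|\partial_t \nabla_\mh^2 v^\mtd, \nabla_\mh^4 v^\mtd\|_{L_p(\R^2\times\R^+)}
\leq C\bigl(\|v^\mtd_0\|_{\dot B^{4-2/p}_{p,p}} + \mathcal{I}_1+\mathcal{I}_2+\mathcal{I}_3\bigr),
\end{equation*}
with $\mathcal{I}_1,\mathcal{I}_2,\mathcal{I}_3$ denoting the $L_p(\R^2\times\R^+)$--norms of the three nonlinear terms above.

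For $\mathcal{I}_1=\|v^\mtd_\mh \cdot\nabla_\mh^3 v^\mtd\|_{L_p L_p}$, I would mimic the argument used for $\|v^\mtd_\mh\cdot\nabla_\mh^2 v^\mtd\|_{L_p L_p}$ in the proof of Lemma \ref{lem:2DNS-2}: use Hölder in space together with the interpolation $\|\nabla_\mh^3 f\|_{L_p}\leq C\|\nabla_\mh^2 f\|_{L_p}^{1/2}\|\nabla_\mh^4 f\|_{L_p}^{1/2}$, then Young's inequality in time, so that
\begin{equation*}
\mathcal{I}_1 \leq \tfrac{1}{2C}\|\nabla_\mh^4 v^\mtd\|_{L_p(\R^2\times\R^+)} + C\|v^\mtd_\mh\|_{L_{2p}(\R^+; L_\infty)}^2 \|\nabla_\mh^2 v^\mtd\|_{L_\infty(\R^+; L_p)},
\end{equation*}
where the first term is absorbed on the left-hand side and the factors on the right are controlled by \eqref{decHNS2d1} (splitting $\R^+=(0,1)\cup(1,\infty)$ exactly as in \eqref{fact3}) and by \eqref{v2d3-es2}. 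For $\mathcal{I}_2=\|\nabla_\mh v^\mtd_\mh\cdot\nabla_\mh^2 v^\mtd\|_{L_p L_p}$, I would use Hölder:
\begin{equation*}
\mathcal{I}_2 \leq \|\nabla_\mh v^\mtd_\mh\|_{L_p(\R^+; L_\infty)}\|\nabla_\mh^2 v^\mtd\|_{L_\infty(\R^+; L_p)},
\end{equation*}
which is already bounded by \eqref{fact4} and \eqref{v2d3-es2}.

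The most delicate term is $\mathcal{I}_3=\|\nabla_\mh^2 v^\mtd_\mh \cdot \nabla_\mh v^\mtd\|_{L_p L_p}$, where the factor $\nabla_\mh v^\mtd_3$ on the $v^\mtd$ side forces a companion $L_p(\R^+; L_\infty)$--estimate for $\nabla_\mh v^\mtd_3$ that is not available in Lemma \ref{lem:2DNS-2}. I propose to derive this by applying $\nabla_\mh$ to the transport--diffusion equation \eqref{nab-v3Eq} for $\nabla_\mh v^\mtd_3$ and running an $L_p$--maximal regularity argument analogous to the estimate of $\|\nabla_\mh v^\mtd_3\|_{L_p}$ in Lemma \ref{lem:2DNS-2}, using the additional regularity $v^\mtd_{3,0}\in \dot B^{4-2/p}_{p,p}(\R^2)$ to close the resulting inequality. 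Alternatively, one can use the interpolation $\dot W^2_p\cap L_2(\R^2)\hookrightarrow W^1_\infty(\R^2)$ together with the new bound on $\nabla_\mh^2 v^\mtd$ in $L_p(\R^+; \dot W^1_p)$ which is being derived, splitting again $\R^+$ at $t=1$ and using the polynomial decay \eqref{decHNS2d3}; this yields an $L_p(\R^+; L_\infty)$--bound on $\nabla_\mh v^\mtd$ controlled by the quantities already present. Then
\begin{equation*}
\mathcal{I}_3 \leq \|\nabla_\mh v^\mtd\|_{L_p(\R^+; L_\infty)} \|\nabla_\mh^2 v^\mtd_\mh\|_{L_\infty(\R^+; L_p)},
\end{equation*}
and both factors are under control. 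Assembling all three bounds, absorbing the $\nabla_\mh^4 v^\mtd$--term from $\mathcal{I}_1$, and keeping careful track of the exponents of $\|v^\mtd_0\|_{L_2\cap \dot B^{4-2/p}_{p,p}}$ that enter through the various interpolations, one arrives at the desired inequality \eqref{v2d-es2}. The main obstacle is really this companion estimate for $\nabla_\mh v^\mtd_3$ at the $L_p(\R^+; L_\infty)$ level, because it is the only place where the extra regularity $\dot B^{4-2/p}_{p,p}$ (rather than $\dot B^{3-2/p}_{p,p}$) of the initial data is genuinely needed and has to be propagated globally in time despite the weak coupling with $v^\mtd_\mh$.
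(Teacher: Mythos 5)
Your proposal follows essentially the same route as the paper: differentiate \eqref{HNS} twice, apply the Stokes maximal regularity of Lemma \ref{lem:StokesR} with data $\nabla_\mh^2 v^\mtd_0\in\dot B^{2-2/p}_{p,p}(\R^2)$, and estimate the three nonlinear terms with the level-three bounds of Lemma \ref{lem:2DNS-2}, absorbing the $\nabla_\mh^4 v^\mtd$ contribution; your splits for $\mathcal{I}_1,\mathcal{I}_2$ differ from the paper's only in bookkeeping (the paper interpolates $\nabla_\mh^3 v^\mtd$ between $\nabla_\mh v^\mtd$ and $\nabla_\mh^4 v^\mtd$ and uses $\|v^\mtd_\mh\|_{L_{3p}(\R^+;L_\infty)}^3$, you interpolate between $\nabla_\mh^2 v^\mtd$ and $\nabla_\mh^4 v^\mtd$ and use $\|v^\mtd_\mh\|_{L_{2p}(\R^+;L_\infty)}^2$), and both close. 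The one point to correct is your diagnosis of $\mathcal{I}_3$: the $L_p(\R^+;L_\infty)$ bound on $\nabla_\mh v^\mtd$ (all three components, including $\nabla_\mh v^\mtd_3$) is not a missing ingredient --- it is precisely \eqref{fact6}, which follows from \eqref{decHNS2d3} and \eqref{estma0} alone; so neither your option (a) nor any appeal to the bound currently being derived (which, read literally, would be circular) is needed, and the extra regularity $v^\mtd_0\in\dot B^{4-2/p}_{p,p}$ is genuinely used only in the initial-data term $\|\nabla_\mh^2 v^\mtd_0\|_{\dot B^{2-2/p}_{p,p}}\simeq\|v^\mtd_0\|_{\dot B^{4-2/p}_{p,p}}$ of the Stokes estimate, not in this companion bound. (The paper treats $\mathcal{I}_3$ with the opposite H\"older split: $\nabla_\mh v^\mtd$ in $L_\infty$ of space-time via \eqref{v2d-es1}, and $\nabla_\mh^2 v^\mtd_\mh$ in $L_p(\R^2\times\R^+)$ via \eqref{estma0}.) Finally, wherever you invoke $\|\nabla_\mh^2 v^\mtd\|_{L_\infty(\R^+;L_p)}$, note that \eqref{v2d3-es2} alone gives only $\sup_t\|\nabla_\mh v^\mtd\|_{\dot B^{2-2/p}_{p,p}}$, so you should combine it with the $L_p$ bounds \eqref{bomgLpes2} and \eqref{nab-v3Lp} by interpolation; with that in place your argument closes and the exponent count lands at the stated power.
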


\begin{proof}[Proof of Lemma \ref{lem:2DNS-2.2}]

From \eqref{HNS}, we see that $\nabla_\mh^2 v^\mtd$ satisfies
\begin{equation}
  \partial_t (\nabla^2_\mh v^\mtd) -\Delta_\mh (\nabla^2_\mh v^\mtd) + \nabla (\nabla_\mh^2 p^\mtd)
  = - (\nabla^2_\mh v^\mtd_\mh)\cdot\nabla_\mh v^\mtd - 2 \nabla_\mh v^\mtd_\mh \cdot \nabla_\mh^2 v^\mtd
  - v^\mtd_\mh\cdot \nabla_\mh(\nabla_\mh^2 v^\mtd),
\end{equation}
then we have
\begin{equation*}
\begin{split}
  & \sup_{t\geq 0} \|\nabla^2_\mh v^\mtd\|_{\dot B^{2-2/p}_{p,p}} + \|(\partial_t \nabla^2_\mh v^\mtd , \nabla^4_\mh v^\mtd )\|_{L_p(\R^2\times \R^+)} \\
  \leq & C \left(\| v^\mtd_\mh \cdot \nabla^3_\mh v^\mtd \|_{L_p(\R^2\times \R^+)} + \|\nabla_\mh v^\mtd_\mh \cdot \nabla_\mh^2 v^\mtd \|_{L_p(\R^2\times \R^+)}
  + \|\nabla^2_\mh v^\mtd_\mh \cdot \nabla_\mh v^\mtd\|_{L_p(\R^2\times \R^+)}+ \|v^\mtd_0 \|_{\dot B^{4-2/p}_{p,p}}\right).
\end{split}
\end{equation*}
By arguing as \eqref{fact5}, and using \eqref{bomgLpes2}, \eqref{nab-v3Lp}, \eqref{fact3}, we find that
\begin{align*}
  \left(\int_0^\infty \|v^\mtd_\mh\cdot\nabla_\mh^3 v^\mtd \|_{L_p(\R^2)}^p \dd t\right)^{1/p} & \leq  \left(\int_0^\infty \|v^\mtd_\mh\|_{L_\infty}^p
  \|\nabla^3_\mh v^\mtd\|_{L_p(\R^2)}^p \dd t\right)^{1/p} \\
  & \leq C\left(\int_0^\infty \|v^\mtd_\mh\|_{L_\infty}^p \|\nabla_\mh v^\mtd\|_{L_p}^{\frac{p}{3}}\|\nabla^4_\mh v^\mtd \|_{L_p(\R^2)}^{\frac{2p}{3}} \dd t\right)^{1/p} \\
  & \leq \frac{1}{4C} \|\nabla^4_\mh v^\mtd \|_{L_p(\R^2\times \R^+)} + C \|v^\mtd_\mh\|_{L_{3p}(\R^+; L_\infty)}^3 \|\nabla_\mh v^\mtd \|_{L_\infty(\R^+; L_p)} \\
  & \leq \frac{1}{4C} \|\nabla^4_\mh v^\mtd \|_{L_p(\R^2\times \R^+)} + C  \Big( \|v^\mtd_0\|_{L_2\cap \dot B^{3-2/p}_{p,p}}^5 +1\Big)e^{C(1+\|v^\mtd_0\|_{L_2}^4)} ,
\end{align*}
and
\begin{equation*}
\begin{split}
  & \left(\int_0^\infty \|\nabla_\mh v^\mtd_\mh \cdot \nabla^2_\mh v^\mtd \|_{L_p(\R^2)}^p \dd t\right)^{1/p} \leq
  \left(\int_0^\infty \|\nabla_\mh v^\mtd_\mh\|_{L_p}^p \|\nabla^2_\mh v^\mtd \|_{L_\infty}^p \dd t\right)^{1/p} \\
  & \leq C\left(\int_0^\infty \|\nabla_\mh v^\mtd_\mh\|_{L_p}^p
  \|\nabla_\mh v^\mtd \|_{L_p}^{\frac{2p-2}{3}} \|\nabla^4_\mh v^\mtd \|_{L_p}^{\frac{p+2}{3}} \dd t\right)^{1/p} \\
  & \leq  \frac{1}{4C} \|\nabla^4_\mh v^\mtd \|_{L_p(\R^2\times \R^+)} + C \|\nabla_\mh v^\mtd\|_{L_\infty(\R^+; L_p)}
  \bigg( \|\nabla_\mh v^\mtd_\mh\|_{L_\infty(0,1; L_p)}^{\frac{3p^2}{2p-2}}
  + \int_1^\infty \|\nabla_\mh v^\mtd_\mh\|_{L_p}^{\frac{3p^2}{2p-2}} \dd t\bigg)^{1/p} \\
  & \leq \frac{1}{4C} \|\nabla^4_\mh v^\mtd \|_{L_p(\R^2\times \R^+)} + C \Big( \|v^\mtd_0\|_{L_2\cap \dot B^{3-2/p}_{p,p}}^5 +1\Big)e^{C(1+\|v^\mtd_0\|_{L_2}^4)},
\end{split}
\end{equation*}
where in the last line we have used the following estimate
\begin{equation*}
  \Big(\int_1^\infty \|\nabla_\mh v^\mtd_\mh\|_{L_p}^{\frac{3p^2}{2p-2}} \dd t \Big)^{1/p}\leq C \|v^\mtd_0\|_{L_2}^{\frac{3p}{2p-2}} e^{C(1+\|v^\mtd_0\|_{L_2}^4)}
  \Big(\int_1^\infty t^{-\frac{3p}{2}} \dd t\Big)^{1/p} \leq  C \|v^\mtd_0\|_{L_2}^3 e^{C(1+\|v^\mtd_0\|_{L_2}^4)}.
\end{equation*}
From \eqref{estma0} and the continuous embedding $L_2\cap \dot B^{2-2/p}_{p,p}(\R^2)\hookrightarrow \dot W^1_p(\R^2)$, we also infer
\begin{equation*}
\begin{split}
  \|\nabla_\mh^2 v^\mtd_\mh \cdot \nabla_\mh v^\mtd \|_{L_p(\R^2\times \R^+)}  
  & \leq \|v^\mtd\|_{L_\infty(\R^+; L_2\cap \dot B^{2-2/p}_{p,p})}  \|\nabla_\mh^2 v^\mtd_\mh\|_{L_p(\R^2\times\R^+)} \\
  & \leq C  \Big( \|v^\mtd_0\|_{L_2\cap \dot B^{3-2/p}_{p,p}}^6  + 1\Big)e^{C(1+\|v^\mtd_0\|_{L_2}^4)} .
\end{split}
\end{equation*}
Hence, gathering the above estimates leads to
\begin{equation}\label{v2d3-Es3}
\begin{split}
  \sup_{t\geq 0} \|\nabla^2_\mh v^\mtd\|_{\dot B^{2-2/p}_{p,p}} + \|(\partial_t \nabla^2_\mh v^\mtd , \nabla^4_\mh v^\mtd )\|_{L_p(\R^2\times \R^+)}
  \leq  C \Big( \|v^\mtd_0\|_{L_2\cap \dot B^{4-2/p}_{p,p}}^6  + 1\Big)e^{C(1+\|v^\mtd_0\|_{L_2}^4)}.
\end{split}
\end{equation}

Therefore, \eqref{v2d3-Es3} and \eqref{v2d-es1} combined with Calder\'on-Zygmund's theorem yield \eqref{v2d-es2}, as desired.
\end{proof}

\section*{Acknowledgments}

The first author (P.B.M.) has been partly supported by National Science Centre grant 2014/14/M /ST1/00108 (Harmonia).
The second author (L.X.) has been partly supported by National Natural Science Foundation of China (grant Nos. 11401027, 11671039 and 11771043).
The third author (X.Z.) has been partly supported by National Natural Science Foundation of China (grant Nos. 11671047, 11771423 and 11871087).

\end{document}